\newtheorem{defi}{Definition}
\newtheorem{remark}{Remark}
\newtheorem{lemma}{Lemma}
\newtheorem{theorem}[lemma]{Theorem}
\newtheorem{proposition}[lemma]{Proposition}
\def\R{\mathbb{R}}
\def\N{\mathbb{N}}
\title{Approximation and stability results for the parabolic FitzHugh-Nagumo system with
combined rapidly oscillating sources\footnotemark[1]}
\author{Eduardo Cerpa\footnotemark[3] , \,Mat\'ias Courdurier\footnotemark[4] , \,Esteban Hern\'andez\footnotemark[5] ,\\
\,Leonel E. Medina\footnotemark[6], \,Esteban Paduro\footnotemark[2]\,\,\footnotemark[3]}
\date{April 2025}
\begin{document}
\maketitle

\footnotetext[1]{This work has been partially supported by ANID Millennium Science Initiative
Program through Millennium Nucleus for Applied Control and Inverse Problems NCN19-161, Fondecyt 11190822, and a grant from the Simons Foundation during a visit to the Programme RNT at the Isaac Newton Institute, Cambridge, supported by EPSRC grant no EP/R014604/1.
}
\footnotetext[2]{Corresponding author. E-mail: {\tt esteban.paduro@mat.uc.cl}}

\footnotetext[3]{Instituto de Ingenier\'ia Matem\'atica y Computacional, Pontificia Universidad Cat\'olica de Chile,  
Santiago, Chile.
}
\footnotetext[4]{Departamento de Matem\'atica, Facultad de Matem\'aticas, Pontificia Universidad Cat\'olica de Chile. 
Santiago, Chile.
}
\footnotetext[5]{Departamento de Matem\'atica, Universidad T\'ecnica Federico Santa Mar\'ia, 
Valpara\'iso, Chile.
}
\footnotetext[6]{Departamento de Ingenier\'ia Inform\'atica, Universidad de Santiago de Chile, 
Santiago, Chile. 
}

\begin{abstract}

The use of high-frequency currents in neurostimulation has received increased attention in recent years due to its varied effects on tissues and cells. Nonlinear differential equations are commonly used as models for Neurons, and averaging methods are suitable for addressing questions like stability when considering single-frequency sources. A recent strategy called temporal interference stimulation uses electrodes to deliver sinusoidal signals of slightly different frequencies. Thus, classical averaging cannot be directly applied. 
This paper considers the one-dimensional FitzHugh-Nagumo system under the effects of a source composed of two sinusoidal terms in time and decaying in space. We develop a new averaging strategy to show that the solution of the system can be approximated by an explicit highly-oscillatory term plus the solution of a simpler, non-autonomous system. One of the main novelties is an extension of the contracting rectangles method to the case of parabolic equations with space and time-depending coefficients.

\end{abstract}

\vspace{0.2 cm}

{\bf Keywords:} Parabolic equations, nonlinear system, averaging, contracting rectangles, temporal interference stimulation

\section{Introduction}

\subsection{Motivation}\label{subsec_motivation}
The cable equation is a type of nonlinear parabolic system that is used in neuroscience to study the dynamics of the membrane voltage in neurons and axons \cite{HH1952,hastings_mathematical_1975,mcneal_analysis_1976,rattay_analysis_1986,warman_modeling_1992}. In this context, the FitzHugh-Nagumo (FHN) system \cite{FITZHUGH1961445} describes membrane voltage dynamics, and has been used to understand neuron responses to electrical stimulation. Electrical neurostimulation is a technique used to treat the symptoms of various nervous system disorders and diseases, including Parkinson’s disease, epilepsy, and chronic pain, among others \cite{Krames2009}. To improve the success of these therapies, a key step is to advance the understanding of how action potentials, \textit{i.e.}, abrupt changes in membrane potential that can propagate along neurons, are generated upon the application of electric fields from external current sources. In mathematical terms, the biological phenomenon of action potential generation and propagation can be linked to traveling wave solutions of a nonlinear partial differential equation (PDE) posed on the whole line as the following FHN system  
\begin{equation}\label{FHN_pde_non_centered_intro}
\left\{
\begin{array}{rcll}
\partial_t f - \partial_x^2 f  &=& f - f^3/3  - g + I(x,t) &,\quad \forall x\in\R,t\geq 0,\\
\partial_t g  &=& \varepsilon(f - \gamma g + \beta) &,\quad \forall x\in\R,t\geq 0,\\
f(x,0) = f_0(x), && g(x,0) = g_0(x) &,\quad \forall x\in\R,
\end{array}\right.
\end{equation} where $f=f(x,t)$ represents the membrane voltage, $g=g(x,t)$ is a recovery variable, $I(x,t)$ is the input current, $f_0=f_0(x),g_0=g_0(x)$ are the initial data, and $\varepsilon, \gamma,\beta$ are positive constants. Classical results on the well-posedness of \eqref{FHN_pde_non_centered_intro} can be found on \cite{MR425397,rauch-smoller,MR511479}. Understanding and characterizing traveling waves for \eqref{FHN_pde_non_centered_intro} is crucial, but because of their nonlinear nature,  they are difficult to study by linearization approaches. Prior work has studied the existence \cite{hastings_single_1982}, stability \cite{jones_stability_1984,bressloff_traveling_2000}, and numerical approximation \cite{namjoo_numerical_2018,al-juaifri_finite_2023} of traveling waves in FHN equations. For similar nonlinear parabolic equations, there are different profiles for the traveling wave solutions, including front-like in Fisher-KPP type systems \cite{fisher_wave_1937,kolomgorov1937study} and pulse-like traveling waves \cite{carpenter_geometric_1977,conley_traveling_1975,langer_existence_1980,hastings_existence_1976}, for single equations or systems. Particularly interesting are some recent results for propagation in periodic media \cite{liang,ding,hamel,hamel2,nadin,karsten3}.

Different authors have considered the problem of stimulation for the FHN model from an applied viewpoint \cite{taghipour_farshi_assessment_2016,chen_multi_scale_2021,ambrosio_periodically_2022} and in recent years, the use of rapidly oscillating current sources, in the kilohertz-frequency range, has received increasing attention in neurostimulation.  This is because incorporating high-frequency components in the source may facilitate reaching deeper structures from the surface \cite{Medina2014} and can block the conduction of action potentials in axons \cite{Bhadra2007}. This brings new difficulties associated with the key role of high-frequency sources, affecting even the efficiency of numerical simulations. 

In this paper, we consider the one-dimensional FHN  system \eqref{FHN_pde_non_centered_intro} subjected to the so-called temporal interference stimulation \cite{Mirzakhalili2020,Gomez-Tames2021}, which is composed of two sources that are sinusoidal in time and quadratically decaying in space. In practice, these kinds of sources are used to look for new neuron activation profiles but bring new technical difficulties from a mathematical viewpoint. To study this setting, we develop a new averaging strategy to prove that
when 1) the frequencies of the interferential currents are sufficiently high and 2) the amplitude parameters are not too large, the full system can be approximated by 
an explicit highly-oscillatory term plus the solution of a simpler -albeit non-autonomous- system, for which we present some stability results. In a previous work \cite{cerpa_partially_2022}, we studied an ordinary differential equation FHN model, but significant challenges appeared when trying to extend the result to the PDE system. Our approach is inspired by the averaging method (see, e.g., \cite{khalil2013}) used in prior work to study oscillating source terms in the form of one sinusoidal signal \cite{Ratas2012,Weinberg2013}. With only one rapidly oscillatory sinusoidal term in the source, the approximated averaged system is autonomous, allowing for direct stability analysis. Here, we study the case of two rapidly oscillatory sinusoidal terms, obtain an approximated averaged system that is non-autonomous, and tackle the difficulties associated with stability analyses of such a system. In addition to the averaging method, another key component of this work is the technique of contracting rectangles, which has been used for the FHN model \cite{rauch-smoller} and other related problems \cite{conway1977, conway_large_1978,valencia_invariant_1990,ZHOU202239}.

\subsection{Averaging of PDE with time oscillatory terms}
One of the main technical tools used to study systems involving high-frequency source terms is averaging. For PDE, we can find the works \cite{karsten,matthiesExponentialAveragingRapid2008,karsten2} and a very complete review in \cite[Appendix E]{sanders}. Let us present our problem in the context of classic averaging and describe the obstacles to a direct application of that method, hence requiring a modified approach. Consider problem \eqref{FHN_pde_non_centered_intro} in a general abstract form as a system for $u=(f,g)$
\begin{equation}\label{averaging_problem}
\partial_t u + \mathfrak{L} u= F(u,t,\omega,x),\quad x\in \R,\quad t\geq0,
\end{equation}
where $\mathfrak{L}$ is a linear (possibly singular) elliptic (acting in the space variable) operator, $\omega$ represents the high-frequency of a periodic term, and $F$ is a nonlinear function that might include non-homogeneous source terms. After a change of variables $u = v + h(t)$, $\omega t = s$ (for an appropriate $h$), the system \eqref{FHN_pde_non_centered_intro} can be recast as
\begin{equation}\label{averaging_problem2}
\partial_s v + \frac{1}{\omega} \mathfrak{L} v =\frac{1}{\omega} \tilde{F}(v,s,\omega,x),
\end{equation}
where $\frac{1}{\omega}$ is a small parameter because we consider a high frequency $\omega$. At this point, there are a few approaches that we could try to apply, such as classical averaging in some appropriate Banach space. However, our $\tilde{F}$ is not periodic over $t$, and we do not have a structural short time interval to average over. In \cite{hale_averaging_1990}, averaging over an infinite time interval is shown to work for almost periodic functions of the form $F(u,t,\omega,x) = G(u, t \omega)$. But there are two obstacles when applying this directly to \eqref{averaging_problem2}. The first one is the presence in $\tilde{F}$ of a non-autonomous term in $x$, although this can likely be overcome using a semi-group approach \cite{hale_averaging_1990}. The second issue is more fundamental: in $\tilde{F}$, the dependence on $(s,\omega)$ cannot be written as a dependence only on $s\omega$. Indeed, a key element in temporal interference stimulation is the effect of the slowly varying envelope arising from the arithmetic addition of the two highly oscillatory sinusoidal signals. Averaging in the quasiperiodic setting is considered in \cite{simoAveragingFastQuasiperiodic1994} for ODEs and \cite{matthiesExponentialAveragingRapid2008} for PDEs. When applied to our problem, the Diophantine condition required for quasiperiodic averaging translates into an additional hypothesis of the ratio of the frequencies of the sources being irrational, and the result leads to an autonomous approximation of the problem. Partial averaging for ODEs has also been considered under the additional hypothesis that the averaged system has an exponentially stable equilibrium point \cite{peuteman_exponential_2002,grammel_exponential_2003,hale_ordinary_2009}. Still, we cannot use such results since our averaged system has no static equilibrium.

An additional challenge arises when establishing the time length for which  \eqref{averaging_problem2} can be approximated by its averaged version. Classical averaging results of ODE suggest that \eqref{averaging_problem2} can be compared with the solution of the corresponding averaged system with an error of $O(1/\omega)$ for a time length  $O(\omega)$. However, for the original variables $t=s/\omega$, such an estimate only works for a time length $O(1)$. For ODEs, the estimates can be extended for all time lengths by requiring the additional condition of exponential stability around the equilibrium point for the averaged system \cite{hale_ordinary_2009,khalil2013}. However, the averaged version of \eqref{averaging_problem2} is a non-autonomous PDE, hence additional considerations are needed to establish approximation results for longer times. To overcome these difficulties, we propose an averaging method over the fast oscillations that retains the slower oscillating envelope formed by the two sinusoidal signals, similar to our approach for the ODE system \cite{cerpa_partially_2022}. Averaging over the fast oscillations results in a non-autonomous approximated system, which adds many technical challenges compared with classical averaging but allows for estimates that improve as the frequency $\omega$ increases. This particular averaging strategy, which goes through the analysis of a related non-autonomous equation, can also be used to obtain a separation of timescales among the linear high-frequency response and the nonlinear phenomena arising in some modulated high-frequency excitation of the FHN ODE system (see \cite{cerpa2024impact}).

Since we look for estimates for all times, uniform exponential stability around an equilibrium point for the averaged system would be useful. Unfortunately, such a result is not available for the PDE system.  Instead, we use the following approach to extend the estimates in time: first, via a fixed point argument we show that up to linear order terms, the approximation error is $O(\frac{1}{\omega})$, this is important since the right-hand side of \eqref{averaging_problem} depends explicitly on both $x$ and $t$, which makes the analysis harder; second, using the contracting rectangles method \cite{rauch-smoller} we can study the nonlinear part of the approximation error.

\subsection{Setting}
We consider a model axon under the effect of a current generated by two point sources with different frequencies. The axon is represented by an infinite straight line parametrized by $x\in\R$. The first point source has coordinate $x=0$ and is located at a distance $d_1$ from the axon, while the second source has coordinate $x=x_0$ and is at a distance $d_2$ from the axon. Thus, the effect of point sources, with amplitudes $\omega_1a, \omega_2b\in\R$, over the axon can be written as a source term
\begin{equation}\label{input_current}
I(x,t) = A(x)\omega_1 \cos(\omega_1 t) +  B(x)\omega_2\cos(\omega_2 t),
\end{equation}
with $A(x)$ and $B(x)$ that decay in space. To simplify the presentation, we will assume quadratic decay along the direction of the axon (other decays can also be tackled with the techniques presented here)
\begin{equation}\label{decay_AxBx}
A(x) = \frac{a}{d_1^2+x^2},\quad B(x) = \frac{b}{d_2^2+(x-x_0)^2}.
\end{equation}
Here, $a$, $b \in \R$ does not depend on the frequency. We describe the sources with amplitudes scaled by frequency because we will see this is natural scaling (also observed in \cite{Ratas2012}). It is important to underline that since $\omega_1,\omega_2$ will be arbitrarily large, the source $I(x,t)$ can have a very large magnitude even if $a,b,$ are small. 

Regarding the different frequencies $\omega_1$ and $\omega_2$ in \eqref{input_current}, we note that we focused on the case of temporal interference currents defined by 
\begin{equation}\label{interferential}
   \omega_1 \gg 1,\quad\omega_2=\omega_1+\eta\,\, \quad\text{for some beat frequency }\, \eta>0.
\end{equation}

In practice, typical values for the frequency of the interferential sources are in the few kilohertz range, with a beat frequency in the order of tens of hertz.  We aim to study the FHN system for source terms like \eqref{input_current}, and we consider a slightly more general version of \eqref{FHN_pde_non_centered_intro} given for $\rho\geq0$ by 
\begin{equation}\label{FHN_pde_non_centered}
\left\{
\begin{array}{rll}
\partial_t f - \partial_x^2 f  &= f - f^3/3  - g + I(x,t) &,\quad \forall x\in\R,t\geq 0,\\
\partial_t g -\rho \partial_x^2 g &= \varepsilon(f - \gamma g + \beta) &,\quad \forall x\in\R,t\geq 0,\\
f(x,0) = f_0(x), & g(x,0) = g_0(x)&,\quad \forall x\in\R,
\end{array}\right.
\end{equation}
where $f=f(x,t)$ represents the voltage, $g=g(x,t)$ is a recovery variable, $f_0=f_0(x),\,g_0=g_0(x)$ are the initial data, and $\varepsilon$, $\gamma$, $\beta$ are positive constants. It is worth mentioning that this system is nonlinear, potentially allowing solutions to behave as traveling waves. From the biological point of view, the most relevant case is $\rho=0$, \textit{i.e.}, the classical FitzHugh-Nagumo system, but the case $\rho>0$ can be of interest if some spatial diffusion of the recovery variable needs to be included. Hence, we study the cases $\rho=0$ and $\rho>0$ in a unified manner (similarly to what was done in \cite{rauch-smoller}).

For the initial data of system \eqref{FHN_pde_non_centered}, we impose that
$$f_0(x)\to v_0,\quad g_0(x) \to w_0,\quad \text{ as } x \to \pm \infty,$$ where $(v_0,w_0)\in\R^2$ is the unique solution (see Definition \ref{defi_admissible_parameters} below) of 
\begin{equation}\label{defi_v0w0}
\left\{\begin{array}{rl}
0 &= v_0-v_0^3/3-w_0,\\
0 &= v_0-\gamma w_0 + \beta,\\
\end{array}\right.
\end{equation}
and consequently $(v_0,w_0)$ is a stationary solution for \eqref{FHN_pde_non_centered} when $I(x,t)=0$. We consider the system \eqref{FHN_pde_non_centered} centered at the stationary solution given by \eqref{defi_v0w0}. Hence, we set
$v = f- v_0$, $w = g- w_0$, $\bar{f}_0 = f_0 - v_0$ and $\bar{g}_0 = g(x) -w_0$ to get
\begin{equation}\label{FHN_pde}
\left\{\hspace{-1mm}
\begin{array}{rll}
\partial_t v - \partial_x^2 v &= (1- v_0^2)v  -v_0 v^2 - \frac{1}{3}v^3- w  + I(x,t), &\forall x\in\R,t\geq 0,\\
\partial_t w-\rho \partial_{x}^{2}w&= \varepsilon (v - \gamma w),  &\forall x\in\R,t\geq 0,\\
v(x,0) = \bar{f}_0(x),& w(x,0) = \bar{g}_0(x), &\forall x\in\R,
\end{array}
\right.
\end{equation}
with $\bar{f}(x)\to 0,\, \bar{g}(x)\to 0$ when $x\to\pm\infty$.
Regarding the parameters, the following definition establishes the possible choices.

\begin{defi}[Admissible set of parameters]\label{defi_admissible_parameters}
The parameters $\beta, \gamma >0$ are said to be admissible if
\begin{multline}\label{condition_H1}
\frac{(1-\gamma)^3}{\gamma^3} +\frac 9 4 \frac{\beta^2}{\gamma^2} > 0 \text{ and if }
\exists\delta,\, 0<\delta < 1/4 \text{ such that } \left(1+\frac{1}{\delta\gamma}\right)^{1/2}\left( 2-\frac{3+1/\delta}{\gamma}\right) + 3\frac{\beta}{\gamma}> 0.
\end{multline}
\end{defi}

\begin{remark}
This condition \eqref{condition_H1} is obtained after requiring the system \eqref{defi_v0w0} to have a unique solution and other technical conditions needed throughout the paper. More detailed explanations are given in Lemma \ref{properties_admissible_parameters}, where we show that the set of parameters satisfying \eqref{condition_H1} is not empty.
\end{remark}

\subsection{Main results}
The main purpose of this work is to present a novel tool for the analysis of nonlinear systems such as \eqref{FHN_pde} when considering highly oscillating source terms as those given by \eqref{input_current} and \eqref{interferential}. A key element in our analysis is an approximated system that we call the Partially Averaged System, which is defined as follows.

\begin{defi}[Partially Averaged System] Given $\beta$, $\gamma$, satisfying \eqref{condition_H1}, $\rho\geq0$, and $\varepsilon>0$ the Partially Averaged System of \eqref{FHN_pde} for a source term $I(x,t)$ given by \eqref{input_current} and \eqref{interferential}, is defined as 
\begin{equation}\label{FHN_pde_pas}
\left\{
\begin{array}{rl}
\partial_t V-\partial_{x}^2V  &= \left(1-v_0^2-\frac{A(x)^2}{2} - \frac{B(x)^2}{2} -A(x) B(x) \cos(\eta t) \right)V\\
&\hspace{0.2cm}-v_0V^2- \frac{1}{3}V^3 - W\\
&\hspace{0.2cm}-\left(\frac{A(x)^2}{2}+\frac{B(x)^2}{2}+A(x)B(x)\cos(\eta t)\right)v_0, \hfill\forall x\in\R, t\geq 0,\\
\partial_t W- \rho\partial_{x}^2 W &= \varepsilon\left( V- \gamma W \right), \hfill\forall x\in\R,t\geq 0,\\
V(x,0)  &= \bar{f_0}(x),\;  W(x,0) =\bar{g_0}(x), \hfill \forall x\in\R.
\end{array}
\right.
\end{equation}
The formal derivation of this system as an approximation is included in Appendix \ref{appendix_derivation}. 
\end{defi}

To study system \eqref{FHN_pde_pas} we need to extend the well-posedness results from \cite{rauch-smoller} to the non-autonomous case. We consider the following family of nonlinearities.

\begin{defi}\label{defi_space_vB}
Let us denote by $BC^0(\R)$ the space of real-valued functions defined on $\R$, which are bounded and uniformly continuous. Given a Banach space $\mathcal{B}\subset BC^0(\R)$ we consider the space $V(\mathcal{B})$ of vector-valued functions
$$F(U,x,t): \R^2\times \R \times [0,\infty) \to \R^2 $$
which can be written in the form
$$F(U,x,t) = f_{(0,0)}(x,t) + \sum_{1\leq |\alpha| \leq n} f_\alpha(x,t) U^\alpha,$$
where $n \in \N$, $\alpha = (\alpha_1, \alpha_2)$ is a multi-index, $|\alpha| = \alpha_1+ \alpha_2$, $U^\alpha = U_1^{\alpha_1} U_2^{\alpha_2}$, and the vector-valued functions $f_\alpha$ satisfy
\begin{itemize}
    \item $t\mapsto f_{(0,0)}(\cdot,t) \in L^{\infty}([0,\infty);\mathcal{B}\times \mathcal{B})$,
    \item for $1\leq |\alpha| \leq n$ we have
$t\mapsto f_\alpha(\cdot,t)\in L^{\infty}([0,\infty);\mathcal{B}\times \mathcal{B})$ or $t\mapsto f_\alpha(\cdot,t) = f_\alpha(t) \in L^{\infty}([0,\infty);\R^2)$.
\end{itemize}
\end{defi}

The local existence of solutions of the FHN system in the non-autonomous case is the most challenging aspect of such an extension. The proof of Theorem \ref{thm_well_posedness} is given in Section \ref{section_well_posedness}.

\begin{theorem}[Well posedness for non-autonomous systems] \label{thm_well_posedness} Let $\mathcal{B}$ denote either the Sobolev space $\mathcal{B}=W^{k,p}(\R)$ with $k,p\geq 1$, $kp >1$ or the space $\mathcal{B}=BC^0(\R)\cap L^p(\R)$ with $p\geq 1$ and let $F(U,x,t)\in V(\mathcal{B})$.
\begin{itemize}
\item[(i)] Then, for any $U_0 \in {\mathcal{B}\times \mathcal{B}}$, there exists a constant $t^*>0$, depending only on $F$ and $\|U_0\|_\infty$ such that the initial value problem 
\begin{equation}\label{eqn_generalized_parabolic}
\partial_t U - A \partial_{x}^2 U = F(U,x,t), \quad x\in \R,\quad t\geq t_0,
\end{equation}
where $A =  diag(1,\rho)$, with $\rho \geq 0$ and data $U(t_0) = U_0$ has a unique solution in $C([t_0,t_0+t^*];{\mathcal{B}\times \mathcal{B}})$.

\item[(ii)] Additionally, assume the functions in $\mathcal{B}$ are continuous and decay to 0 at $\pm \infty$, assume there exists a rectangle $R = [-L,L]\times[-S,S]\subset \R^2$ such that for each $\vec{v} \in \partial  R$ and every outward pointing unit vector $n(\vec{v})$ we have
\begin{equation*}
\sup_{x\in\R}n(\vec{v}) \cdot F(\vec{v},x,t) < 0, \quad t>0,
\end{equation*}
and assume that for some $\hat{\epsilon} >0$ 
$$
(1+\hat{\epsilon})U_0(x) \in R,\quad \forall x\in \R,$$
then, there is a unique solution $U\in C([0,\infty);{\mathcal{B}\times \mathcal{B}})$ of \eqref{eqn_generalized_parabolic} with $U(0) = U_0$ which also satisfies
$$U(x,t) \in R, \quad \forall x\in \R, t>0.$$
\end{itemize}
\end{theorem}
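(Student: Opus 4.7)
The plan is to adapt the semigroup/contraction argument for autonomous parabolic systems (as in \cite{rauch-smoller}) to this non-autonomous setting, handling the cases $\rho=0$ and $\rho>0$ in tandem. I would first rewrite \eqref{eqn_generalized_parabolic} in mild form via Duhamel's formula,
\begin{equation*}
U(t)= S_A(t-t_0)U_0 + \int_{t_0}^{t} S_A(t-\tau)\,F(U(\tau),\cdot,\tau)\,d\tau,
\end{equation*}
where $S_A(t) = \mathrm{diag}(e^{t\partial_x^2},\,e^{t\rho\partial_x^2})$ is the (possibly degenerate when $\rho=0$) heat semigroup on $\mathcal{B}\times\mathcal{B}$. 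Under either choice of $\mathcal{B}$, namely $W^{k,p}(\R)$ with $kp>1$ or $BC^0(\R)\cap L^p(\R)$, the semigroup $S_A(t)$ is strongly continuous and uniformly bounded on $\mathcal{B}\times\mathcal{B}$, $\mathcal{B}$ embeds continuously into $L^\infty(\R)$, and $\mathcal{B}$ is closed under products of its elements; this is the Sobolev algebra property in the first case and a direct check in the second.

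For part (i), I would run a Banach fixed point argument in the ball
\begin{equation*}
\bigl\{U\in C([t_0,t_0+t^*];\mathcal{B}\times\mathcal{B}):\ \|U-U_0\|_{L^\infty_t(\mathcal{B}\times\mathcal{B})}\leq 1\bigr\}
\end{equation*}
for $t^*$ sufficiently small. Because $F\in V(\mathcal{B})$ is polynomial of degree $\leq n$ in $U$ with coefficients $f_\alpha(\cdot,t)$ that are uniformly bounded in $t$ as elements of $\mathcal{B}\times\mathcal{B}$ (or of $\R^2$), the algebra property gives
\begin{equation*}
\|F(U,\cdot,t)\|_{\mathcal{B}\times\mathcal{B}} \leq C\bigl(1+\|U\|_{\mathcal{B}\times\mathcal{B}}\bigr)^n,
\end{equation*}
uniformly in $t$, together with a matching Lipschitz estimate. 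The non-autonomous character enters only through these uniform-in-$t$ bounds, so the standard contraction argument goes through, with a time of existence $t^*$ depending only on $F$ and on $\|U_0\|_\infty$.

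For part (ii), I would combine part (i) with a contracting-rectangle invariance argument in the spirit of \cite{rauch-smoller}, adapted to $t$-dependent coefficients. From the continuity of $F$ and the strict inward-pointing condition on $\partial R$, I would extract a uniform $\delta>0$ such that in a $\delta$-neighborhood of $\partial R$ the vector field $F(\vec v,x,t)$ points strictly inward at every $(x,t)$. The $\hat\epsilon$-buffer on the initial data, combined with the decay of functions in $\mathcal{B}$ at infinity, means that $U_0(\R)$ is contained in a compact set strictly interior to $R$; this launches a barrier argument. Working componentwise, a sub/super-solution argument on the diffusive $v$ equation, together with a direct integration of the inward condition for the $w$ equation in the degenerate case $\rho=0$, should show via a first-exit-time contradiction that $U(x,t)\in R$ for all $x\in\R$ and as long as the local solution exists. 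Once this $L^\infty$ bound is in hand, the local time $t^*$ from part (i) becomes uniform along the orbit and the solution extends to $[0,\infty)$.

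The main obstacle I anticipate is the invariance step when $\rho=0$: the standard parabolic maximum principle for cooperative systems does not apply directly to the mixed parabolic-ODE system on the unbounded domain $\R$, and some care is needed to extract a rigorous pointwise ``no first exit'' statement from the strict inward-pointing hypothesis. The autonomous case in \cite{rauch-smoller} furnishes the template, but the explicit $t$-dependence of $F$ requires that the inward estimate be used in a way that survives non-autonomy, and the unbounded spatial domain requires the decay of $U$ at infinity, inherited from $U\in\mathcal{B}\times\mathcal{B}$, to rule out a maximum attained only in the limit $x\to\pm\infty$.
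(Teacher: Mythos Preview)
Your fixed-point argument in part (i) has a real gap: running the contraction on a ball in $C([t_0,t_0+t^*];\mathcal{B}\times\mathcal{B})$ of radius $1$ around $U_0$ yields a time $t^*$ that depends on the Lipschitz constant of $F$ on that ball, which via the algebra estimate $\|F(U,\cdot,t)\|_{\mathcal{B}\times\mathcal{B}}\le C(1+\|U\|_{\mathcal{B}\times\mathcal{B}})^n$ depends on $\|U_0\|_{\mathcal{B}\times\mathcal{B}}$, not on $\|U_0\|_\infty$. This distinction is not cosmetic: in part (ii) the contracting rectangle only controls $\|U(t)\|_\infty$, so if $t^*$ depends on the full $\mathcal{B}$-norm you cannot iterate to global time. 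The paper closes this gap by a two-layer argument: first, one runs the same contraction in $BC^0\times BC^0$ (which is itself admissible and where the Lipschitz constant genuinely depends only on $\|U_0\|_\infty$), obtaining a solution $V\in C([t_0,t_0+t_1];BC^0\times BC^0)$ with $t_1=t_1(\|U_0\|_\infty,F)$; second, one shows that the $\mathcal{B}$-regularity of $V$ propagates all the way to $t_1$ by deriving, via condition \eqref{condition_non_linearity2} and Gronwall, a bound $\|V(t)\|_{\mathcal{B}\times\mathcal{B}}\le C$ uniform on $[t_0,t_0+t_2]$ whenever $V\in C([t_0,t_0+t_2];\mathcal{B}\times\mathcal{B})$, and then using this uniform bound to extend the $\mathcal{B}$-regularity by fixed increments $\eta$ until $t_1$ is reached. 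This bootstrap is the reason the paper isolates the three separate hypotheses \eqref{condition_non_linearity}, \eqref{condition_non_linearity2}, \eqref{condition_non_linearity3} rather than using a single algebra estimate.

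For part (ii) your outline is in the right spirit, but the paper's mechanism is more specific and avoids the componentwise sub/super-solution issues you flag. It introduces the Minkowski functional $\|W\|_X=\sup_x\inf\{r>0:W(x)\in rR\}$ associated to the rectangle, sets $E(t)=\|U(\cdot,t)\|_X$, and shows via \cite[Lemma~3.8]{rauch-smoller} that whenever $E(\hat t)=1$ the upper Dini derivative satisfies $\bar D E(\hat t)<0$; since $E(0)<1$ by the $\hat\epsilon$-buffer, $E(t)<1$ for all $t$ in the local existence interval. This handles $\rho\ge 0$ uniformly and uses the decay at $\pm\infty$ exactly to guarantee the supremum defining $\|\cdot\|_X$ is attained, which is where the hypothesis on $\mathcal{B}$ in (ii) enters.
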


Our second result establishes that for small temporal interference stimulation, the Partially Averaged System satisfies some uniform estimates in time and space, providing stability. 

\begin{theorem}[Stability for the Partially Averaged System]\label{thm_estimates_pas} 
Let $\mathcal{B}$ denote either the space $\mathcal{B}=W^{k,p}(\R)$ with $k,p\geq 1$, $kp >1$  or the space $\mathcal{B}=BC^0(\R)\cap L^p(\R)$ with $p\geq 1$.  Let $\varepsilon >0$ and let $\beta$, $\gamma$ be such that \eqref{condition_H1} holds. Let $(v_0,w_0)\in \R^2$ be the unique solution of \eqref{defi_v0w0}. Then, given any open neighborhood $\mathcal{O}$ of $(0,0)$ there exist $N >0$ and a rectangle $R = [-L,L]\times [-S,S]\subset \mathcal{O}$ such that if  $A(x)$, $B(x)$ defined in \eqref{decay_AxBx} satisfy
$$\frac{|a|}{d_1^2}+\frac{|b|}{d_2^2} \leq N$$ 
and the initial data $(f_0(x)-v_0,g_0-w_0)\in \mathcal{B}\times \mathcal{B}$ satisfies for some $\hat{\epsilon}\in(0,1)$, 
$$(1+\hat{\epsilon})(f_0(x)-v_0,g_0(x)-w_0)\in R,$$
then, the initial value problem \eqref{FHN_pde_pas} with initial data $(f_0(x)-v_0,g_0(x)-w_0)$ has a unique solution  $(V,W)\in C([0,\infty);\mathcal{B}\times \mathcal{B})$ 
 satisfying 
$$(V(x,t),W(x,t))\in R, \quad  \forall x\in\R, \quad t \geq 0.$$
\end{theorem}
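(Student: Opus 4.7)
The plan is to construct a rectangle $R\subset\mathcal{O}$ centered at the origin and then invoke the invariant-rectangle part of Theorem \ref{thm_well_posedness}(ii) applied to the PAS \eqref{FHN_pde_pas}. To this end I would first rewrite the nonlinearity of \eqref{FHN_pde_pas} as a perturbation of the autonomous centered FHN field $G=(G_1,G_2)$,
\begin{equation*}
G_1(V,W) = (1-v_0^2)V - v_0V^2 - \tfrac{1}{3}V^3 - W,\qquad G_2(V,W) = \varepsilon(V-\gamma W),
\end{equation*}
by introducing $c(x,t) = \tfrac{A(x)^2}{2} + \tfrac{B(x)^2}{2} + A(x)B(x)\cos(\eta t)$, which by a direct Young inequality satisfies $0\leq c(x,t)\leq A(x)^2+B(x)^2$. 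Combined with $|A(x)|\leq |a|/d_1^2$, $|B(x)|\leq |b|/d_2^2$, the hypothesis $|a|/d_1^2+|b|/d_2^2\leq N$ yields the uniform bound $\|c\|_\infty \leq N^2$. The PAS field then reads $F_1(V,W,x,t) = G_1(V,W) - c(x,t)(V+v_0)$ and $F_2(V,W,x,t)=G_2(V,W)$, so the whole source-induced perturbation is confined to the $V$-component.

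Next, I would use the admissibility condition \eqref{condition_H1} together with the constraint $L<\gamma S$ (which is what makes $G_2$ point inward on the horizontal sides $\{W=\pm S\}$) to produce a family of rectangles $R_0=[-L,L]\times[-S,S]$ centered at $(0,0)$ on which the outward normal component of $G$ is strictly negative along $\partial R_0$ with a quantitative gap $\kappa_0=\kappa_0(L,S)>0$. This is essentially the construction in \cite{rauch-smoller} adapted to the parameters $v_0,\beta,\gamma$; the two inequalities in \eqref{condition_H1} are designed precisely for this purpose, the first giving uniqueness of $(v_0,w_0)$ and the second giving existence of such an $R_0$. By shrinking $L,S$ (while preserving $L<\gamma S$), one arranges $R_0\subset\mathcal{O}$.

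The third step is to absorb the source into $\kappa_0$. Since $F-G$ has only a $V$-component of magnitude at most $c(x,t)(|V|+|v_0|)\leq N^2(L+|v_0|)$ uniformly in $x,t$, choosing $N$ small enough that $N^2(L+|v_0|)<\kappa_0$ preserves strict inward pointing of the full PAS vector field $F(\vec v,x,t)$ along $\partial R_0$, uniformly in $x\in\R$ and $t\geq 0$. Setting $R:=R_0$ then verifies the boundary hypothesis of Theorem \ref{thm_well_posedness}(ii). To close the argument, I would check that $F\in V(\mathcal{B})$ in the sense of Definition \ref{defi_space_vB}: the quadratic spatial decay of $A,B$ in \eqref{decay_AxBx} places $A^2,B^2,AB$ in both $W^{k,p}(\R)$ and $BC^0(\R)\cap L^p(\R)$, so the time-dependent coefficients of $V,V^2,V^3,W$ and the affine term $-c(x,t)v_0$ all lie in $L^\infty([0,\infty);\mathcal{B}\times\mathcal{B})$. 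Theorem \ref{thm_well_posedness}(ii) then delivers the unique global solution together with the invariance $(V(x,t),W(x,t))\in R$.

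The main obstacle I anticipate lies in the second step: making the autonomous construction quantitative enough that the inward gap $\kappa_0(L,S)$ can be controlled as $R_0$ shrinks into an arbitrary neighborhood $\mathcal{O}$ of the origin. Concretely, one needs to ensure that $\kappa_0$ does not degenerate faster than the linear factor $(L+|v_0|)$ multiplying $N^2$, so that a smallness threshold $N$ compatible with the chosen $\mathcal{O}$ always exists. Once this quantitative version of the Rauch–Smoller construction is in hand, the rest of the proof is a relatively soft perturbation argument plus an application of Theorem \ref{thm_well_posedness}(ii).
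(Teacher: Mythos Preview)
Your proposal is correct and follows the same architecture as the paper: verify that the PAS nonlinearity lies in $V(\mathcal{B})$, produce a small contracting rectangle inside $\mathcal{O}$, and invoke Theorem~\ref{thm_well_posedness}(ii). The only organizational difference is that the paper (Lemma~\ref{lemma_invariant_rectangles}) performs the face-by-face check directly on the full field $H$ of \eqref{def_vector_field_H}, obtaining the explicit admissible set \eqref{SetLS} in one pass, whereas you split this into ``autonomous Rauch--Smoller rectangle for $G$'' plus ``absorb the $c(x,t)(V+v_0)$ perturbation into the gap $\kappa_0$''. These are two presentations of the same computation; the paper's direct route yields the quantitative description $D(\Delta)$, while your perturbative phrasing makes the dependence on $N$ more transparent.

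Your anticipated obstacle is not one. The statement lets $N$ depend on $\mathcal{O}$, so the order of quantifiers is: first fix $R_0=[-L,L]\times[-S,S]\subset\mathcal{O}$ with $1/\gamma<S/L<v_0^2-1$ and compute its strictly positive gap $\kappa_0(L,S)$; \emph{then} choose $N$ with $N^2(L+|v_0|)<\kappa_0$. No comparison of degeneration rates between $\kappa_0$ and $L+|v_0|$ is needed. Incidentally, since $c(x,t)\geq 0$ and $V+v_0<0$ on the left face whenever $L<|v_0|$, the perturbation $-c(x,t)(V+v_0)$ has a favorable sign there; only the right face actually requires smallness of $N$, which is precisely condition~\eqref{ineq_final_right_face} in the paper.
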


Our last result establishes a precise notion of how the Partially Averaged System \eqref{FHN_pde_pas} can be used to approximate the solution of system \eqref{FHN_pde_non_centered}, which is the one with biological interest but hard to work with. 

\begin{theorem}[Existence for original system and approximation result]\label{thm_approximation_pas}
Let $\mathcal{B}$ denote the space $\mathcal{B}=W^{k,p}(\R)$ with $k\geq 3$, $p\geq 1$.  Let $\varepsilon >0$ and $\beta$, $\gamma$ be such that \eqref{condition_H1} holds. Let $(v_0,w_0)\in \R^2$ be the unique solution of \eqref{defi_v0w0}. Given any open neighborhood $\mathcal{O}$ of $(0,0)$ and any $\mu >0$, there exist $N>0$ and a rectangle $R= [-L,L],[-S,S]\subset \mathcal{O}$ such that if $A(x)$ and $B(x)$ defined in \eqref{decay_AxBx} satisfy
$$\frac{|a|}{d_1^2}+\frac{|b|}{d_2^2} \leq N$$ 
and the initial data $(f_0(x)-v_0,g_0(x)-w_0)\in \mathcal{B}\times \mathcal{B}$ satisfies for some $\hat{\epsilon}\in(0,1)$,
$$(1+\hat{\epsilon})(f_0(x)-v_0,g_0(x)-w_0)\in R,$$
then, for all $\omega_1$ large enough the solution $(f,g)$ of system \eqref{FHN_pde_non_centered} with initial data $(f_0, g_0)$ exists in $C([0,\infty),\mathcal{B}\times \mathcal{B})$ and can be approximated using the solution $(V,W)$ of the Partially Averaged System \eqref{FHN_pde_pas} with initial data $(f_0-v_0, g_0-w_0)$ in the following way
\begin{align*}
|f -(v_0 + V+ A(x) \sin(\omega_1 t) + B(x) \sin(\omega_2 t) )| &\leq \mu,\quad\forall x\in \R, \quad t>0,\\
|g - (w_0 + W)| &\leq \mu,\quad\forall x\in \R, \quad t>0.
\end{align*}
\end{theorem}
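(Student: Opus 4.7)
The plan is to write the solution of \eqref{FHN_pde_non_centered} in the form $f = v_0 + V + H + \tilde v$, $g = w_0 + W + \tilde w$, where $(V,W)$ is the global solution of the Partially Averaged System \eqref{FHN_pde_pas} guaranteed by Theorem \ref{thm_estimates_pas} (after choosing $N$ small enough), $H(x,t) = A(x)\sin(\omega_1 t) + B(x)\sin(\omega_2 t)$ is the explicit oscillatory correction, and $(\tilde v,\tilde w)$ is the unknown residual. Since $\partial_t H = I(x,t)$ and $\partial_x^2 H = A''(x)\sin(\omega_1 t) + B''(x)\sin(\omega_2 t)$, subtracting the equation for $H$ and the PAS from the centered system \eqref{FHN_pde} yields a semilinear parabolic system for $(\tilde v,\tilde w)$ whose right-hand side splits as (a) linear terms in $(\tilde v,\tilde w)$ with bounded coefficients, (b) quadratic and cubic terms in $\tilde v$ built from $V$ and $H$, and (c) a purely oscillatory residual $R(x,t)$. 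Using the identities $\sin^2(\omega_j t) = \tfrac12 - \tfrac12\cos(2\omega_j t)$ and $2\sin(\omega_1 t)\sin(\omega_2 t) = \cos(\eta t) - \cos((2\omega_1+\eta) t)$ applied to the expansions of $(V+H)^2$ and $(V+H)^3$, one checks that the non-oscillatory (beat-frequency) pieces cancel exactly against the factors $\tfrac{A^2}{2}+\tfrac{B^2}{2}+AB\cos(\eta t)$ appearing in \eqref{FHN_pde_pas}, so every summand of $R$ carries a fast oscillation of frequency at least $\omega_1$.

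\textbf{Local approximation via Duhamel.} Fix $T>0$. The next step is to view $(\tilde v,\tilde w)$ as a fixed point of a Duhamel-type operator $\mathcal{T}$ acting on a small ball of $C([0,T];\mathcal{B}\times\mathcal{B})$, whose well-posedness within the class $V(\mathcal{B})$ is supplied by Theorem \ref{thm_well_posedness}(i). The contribution of $R$ to $\mathcal{T}$ is a time-convolution of the heat semigroup against terms of the form $\phi(x)e^{\pm i\omega s}$ with $\omega\in\{\omega_1,\omega_2,2\omega_1,2\omega_2,\omega_1+\omega_2\}$; integrating by parts once in $s$ trades the oscillation for a boundary term and an interior term involving $\partial_s e^{(t-s)\partial_x^2}\phi = -\partial_x^2 e^{(t-s)\partial_x^2}\phi$, whose $\mathcal{B}$-norm stays bounded because $k\geq 3$. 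Together with the smallness $\|V\|+\|H\|\leq CN$, this makes $\mathcal{T}$ a contraction on a ball of radius $O(1/\omega_1)$, yielding local existence of $(\tilde v,\tilde w)$ on $[0,T]$ with the corresponding approximation bound.

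\textbf{Global extension via contracting rectangles, and main obstacle.} To upgrade the estimate to $t\in[0,\infty)$ the plan is to apply Theorem \ref{thm_well_posedness}(ii) to the residual system. Since $R(x,t)\neq 0$ pointwise, we first change variable to $(\tilde v-\Phi,\tilde w)$, where $\Phi(x,t)=O(1/\omega_1)$ is the explicit time-antiderivative obtained by replacing each $\cos(\omega s)$ in $R$ by $\omega^{-1}\sin(\omega s)$; the transformed system then has forcing bounded pointwise by $O(1/\omega_1)$ plus nonlinear terms of the same structure as before. For prescribed $\mu>0$, choosing $\omega_1$ large and $N$ small allows the right-hand side of the transformed system to be made strictly inward-pointing on the boundary of the rectangle $\tilde R=[-\mu/2,\mu/2]^2$, so Theorem \ref{thm_well_posedness}(ii) yields a global solution contained in $\tilde R$; unwinding the ansatz and using $|\Phi|\leq\mu/2$ gives the stated bounds. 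The hardest step is exactly verifying this inward-pointing condition uniformly in $(x,t)$: because the averaged system has no static equilibrium, contracting rectangles cannot be centered at a fixed point, and the cross-terms $v_0(V+H)\tilde v$ and $(V+H)^2\tilde v$ can tilt the outward normal of $\partial\tilde R$ by an amount proportional to $\|V\|_\infty+\|H\|_\infty$. The linear dissipation inherent to the admissibility condition \eqref{condition_H1} must dominate these perturbations together with the $O(1/\omega_1)$ residual forcing, which is what ultimately forces the smallness of both $N$ and $1/\omega_1$ in the statement.
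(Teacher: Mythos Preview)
Your overall strategy---absorb the fast-oscillatory forcing by an explicit correction, then close globally via contracting rectangles on the residual---is sound and is morally the paper's approach. But the decomposition you propose is not the one the paper uses, and as written it has a gap.

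\textbf{Comparison with the paper.} The paper does \emph{not} absorb the oscillatory residual via an explicit antiderivative $\Phi$. Instead it splits the full error $(E_v,E_w)$ into a \emph{linear} part $(F_v,F_w)$ solving the linearized error system (your system with the $E_v^2$ and $E_v^3$ terms dropped) and a nonlinear remainder $(R_v,R_w)=(E_v-F_v,E_w-F_w)$. The linear system is solved by a contraction iteration in $C([0,T];L^\infty)$: after multiplying by $e^{-(1-v_0^2)t}$ and writing Duhamel, each iteration gains a factor
\[
\alpha=\frac{\|v_0^2-(v_0+V)^2\|_Y+M^2+2M\|v_0+V\|_Y+1/\gamma}{v_0^2-1},
\]
and the admissibility condition \eqref{condition_H1} together with smallness of $N$ and $\|V\|_Y$ force $\alpha<1$. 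The $O(1/\omega_1)$ bound on $(F_v,F_w)$ then comes from a single oscillatory-integral lemma applied to the first iterate (this is exactly where $k\geq 3$ is used, to control $\partial_t V$ and $\partial_x V$ inside the Duhamel integral). Only after this does the paper run the contracting-rectangle argument, on the system for $(R_v,R_w)$, whose forcing is now built from powers of $F_v$ and hence is $O(1/\omega_1^2)$. Your one-step antiderivative shortcut avoids the iteration and the condition $\alpha<1$, at the cost of leaving $O(1/\omega_1)$ (rather than $O(1/\omega_1^2)$) forcing; either is enough for the $\mu$-bound, but the mechanisms are different.

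\textbf{The gap.} You change variables only in the first component, to $(\tilde v-\Phi,\tilde w)$, and then assert that ``the transformed system has forcing bounded pointwise by $O(1/\omega_1)$''. This is false for the second equation: the residual system for $\tilde w$ carries the term $\varepsilon J_0$, which is $O(N)$ pointwise, not $O(1/\omega_1)$. Unless you also shift $\tilde w$ by an explicit $\Psi=-\varepsilon\big(A(x)\omega_1^{-1}\cos(\omega_1 t)+B(x)\omega_2^{-1}\cos(\omega_2 t)\big)$ (which is what the paper effectively achieves by solving the coupled linear system), the inward-pointing check on the top/bottom faces of $\tilde R$ will not go through uniformly in $\omega_1$. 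A second, smaller issue: taking $\tilde R=[-\mu/2,\mu/2]^2$ forces the top/bottom condition $L<\gamma S$ to read $\gamma>1$, which you have not justified from \eqref{condition_H1}; the paper instead takes $S=\frac{1+\epsilon}{\gamma}L$ with $\epsilon>0$ chosen so that $v_0^2-1-\frac{1+\epsilon}{\gamma}>0$, and you should do the same.
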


\begin{remark}
    Theorem \ref{thm_approximation_pas} requires $A(x)$, $B(x)$ to be small, but since $\omega_1$ can be arbitrarily large, the source $I(x,t)$ (given by equation \eqref{input_current}) can have a very large magnitude. I.e., Theorem \ref{thm_approximation_pas} does not reduce to the existence of solutions for the FHN system under small perturbations.
\end{remark}

\subsection{Organization of the paper}
In Section \ref{section_well_posedness}, we prove Theorem \ref{thm_well_posedness} on the well-posedness of some non-autonomous systems. We prove Theorem \ref{thm_estimates_pas} concerning the behavior of the Partially Averaged System in Section \ref{section_estimates}. Section \ref{section_approximation} is devoted to proving Theorem \ref{thm_approximation_pas}, which provides a rigorous proof of what can be said about approximating the original system \eqref{FHN_pde} using the Partially Averaged System \eqref{FHN_pde_pas}. In the Appendix \ref{section_appendix}, we give details of the derivation of the Partially Averaged System \eqref{FHN_pde_pas} and other technical results.

\section{Well posedness result for non-autonomous systems}\label{section_well_posedness}
The goal of this section is to provide a proof for Theorem \ref{thm_well_posedness}. Throughout this paper, $f * g$ denotes the convolution in the space variable
$$(f * g) (x) = \int_\R f(x - y) g(y) dy.$$
In the case of vector-valued functions, this definition will be understood componentwise.

We define the norm in a product space as
$$\|(f,g)\|_{X\times Y} = \frac{1}{2}\|f\|_X + \frac{1}{2}\|g\|_Y$$
and as a shorthand we will denote $\|\cdot\|_{L^\infty\times L^\infty}$ as $\|\cdot\|_\infty$.

\subsection{Functional framework}\label{framework}
Concerning the functional framework where our results will hold, we consider a family of spaces similar to the one considered in \cite{rauch-smoller}. 
We work in some Banach spaces $\mathcal{B}$ contained in the space $BC^0(\R)$ of bounded uniformly continuous functions on $\R$. The precise conditions the space $\mathcal{B}$ has to satisfy are listed in the following definition.
\begin{defi}\label{space_rauch_smoller} 
A  Banach space $\mathcal{B}$ of functions $w:\R\rightarrow\R$ is admissible if the following conditions hold:
\begin{enumerate}
\item $\mathcal{B}$ is a subset of the bounded uniformly continuous functions $BC^0(\R)$  and for $w\in \mathcal{B}$, $\|w\|_\mathcal{B} \geq \|w\|_{L^\infty}$.
\item $\mathcal{B}$ is translation-invariant; i.e. if $W\in \mathcal{B}$, then $W \circ \tau \in \mathcal{B}$ for any translation $\tau: \R \to \R$, and $\|W \circ \tau\|_\mathcal{B} = \|W\|_\mathcal{B}$.
\item If $\tau_h:\R\to\R$ is the translation by $h$, i.e., $\tau_h(x) = x+h$, and $\tau_h w = w \circ \tau_h$, then for any $w\in \mathcal{B}$, $$\lim_{h \to 0}\|\tau_h w-w\|_\mathcal{B} = 0.$$
\item For every vector-valued function $F(U,x,t) \in V(\mathcal{B})$ we have the following property: for any $M>0$ there exist some constants $k_1, k_2, k_3>0$ such that for all $x\in\R$ and $t\in [0,\infty)$ we have
\begin{equation}\label{condition_non_linearity}
\|V\|_{\mathcal{B}\times \mathcal{B}}\leq M \text{ and }\|W\|_{\mathcal{B}\times \mathcal{B}}\leq M \Rightarrow \|F(V,\cdot,t) - F(W,\cdot,t)\|_{\mathcal{B}\times \mathcal{B}} \leq k_1\|V - W\|_{\mathcal{B}\times \mathcal{B}}, \tag{H1}
\end{equation}
\begin{equation}\label{condition_non_linearity2}
\|V\|_{\infty} \leq M \Rightarrow \|F(V,\cdot,t) - F(0,\cdot,t)\|_{\mathcal{B}\times \mathcal{B}} \leq k_2\|V\|_{\mathcal{B}\times \mathcal{B}},\tag{H2}
\end{equation}
\begin{equation}\label{condition_non_linearity3}
\|V\|_{\infty}\leq M \text{ and }\|W\|_{\infty}\leq M \Rightarrow \|F(V,\cdot,t) - F(W,\cdot,t)\|_{\infty} \leq k_3\|V - W\|_{\infty}.\tag{H3}
\end{equation}

\end{enumerate}

\end{defi}
The conditions in this definition impose a reasonable framework for studying the heat equation on the whole line, which is the linear component of our system. For $\sigma>0$ let $g_\sigma(x,t)$ be the fundamental solution to the heat equation in $\R$
\begin{equation}\label{green_heat}
g_\sigma(x,t) = \frac{1}{\sqrt{4\pi \sigma t}} \exp{\left(\frac{-x^2}{4\pi \sigma t}\right)}, \text{ for }t >0, \text{ and }g_0(x,t) = \delta(x).
\end{equation}
Then, for the operator $ \mathcal{L}= \partial_t -A\triangle$ with $A=diag(1,\rho)$, the corresponding Green's function is $G(x,t):=(g_1(x,t),g_\rho(x,t))$.

If $\mu$ is a finite Borel measure on $\R$, and $w\in \mathcal{B}$ for an admissible space, then $\mu*w\in \mathcal{B}$ and
$$\|\mu*w\|_\mathcal{B}\leq (\text{total variation of }\mu)\|w\|_\mathcal{B}$$
because of the translation invariance of the norm in $\mathcal{B}$. The most important application for us will be when $\mu_t = g_\sigma(x,t)dx$ in which case (total variation of $\mu_t$)$=\|g_\sigma(\cdot,t)\|_{L^1}=1$ (for all $t> 0$ and $\sigma>0$) and consequently
\begin{equation}\label{convolution_invariance}
\|g_\sigma(\cdot,t) * w\|_\mathcal{B}\leq \|w\|_\mathcal{B},
\end{equation}
for $\sigma >0$. The case $\sigma = 0$ is immediate.
\begin{remark}
Our assumptions in the nonlinearity $F(V,x,t)$ imply the following. Let $G$ be the Green's function of the operator $ \mathcal{L}U= \partial_t U -A\triangle U$ where $U=(u_1, u_2)$ and $A =  diag(1,\rho)$. Then, using the observation right above
\begin{align*}
H_s &= \sup_{t\in [s,s+1]}\left\|\int_s^t G(t-\tau-s,\cdot)*F(0,\cdot,\tau) d\tau\right\|_{\mathcal{B}\times \mathcal{B}}\\
& \leq \sup_{t\in [s,s+1]}\int_s^t \left\| G(t-\tau-s,\cdot)*F(0,\cdot,\tau)  \right\|_{\mathcal{B}\times \mathcal{B}} d\tau\\
&\leq \sup_{t\in [s,s+1]}\int_s^t \|F(0,\cdot,\tau) \|_{\mathcal{B}\times \mathcal{B}}d\tau  \\ 
&\leq \sup_{t\in [s,s+1]}\|F(0,\cdot,t) \|_{\mathcal{B}\times \mathcal{B}} \\
&\leq \sup_{t\in [0,\infty)}\|F(0,\cdot,t) \|_{\mathcal{B}\times \mathcal{B}} =:C^*,
\end{align*}
where the constant $C^*$ independent of $s>0$. For convenience, we will label this fact for later use
\begin{equation}\label{condition_non_linearity5}
\sup_{t\in [s,s+1]}\left\|\int_s^t G(t-\tau-s,\cdot)*F(0,\cdot,\tau)d\tau \right\|_{\mathcal{B}\times \mathcal{B}} < C, \text{ for }C>0 \text{ independent of } s>0.\tag{H4}
\end{equation}
\end{remark}
This notion of admissible space is not empty and includes relevant sets of functions, as the following result states.
\begin{proposition}\label{prop_hip_local_existence_PAS}
In addition to $BC^0(\R)$, the spaces $\mathcal{B} = W^{k,p}(\R)$, with $k\geq 1$ a positive integer, $p\geq 1$ a real, $kp >1$ and $\mathcal{B} = BC^0(\R)\cap L^p(\R)$ with $p \geq 1$ a real, are also admissible.
\end{proposition}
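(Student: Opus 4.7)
The plan is to verify each of the four conditions in Definition~\ref{space_rauch_smoller} for both families $\mathcal{B} = W^{k,p}(\R)$ (with $k, p \geq 1$, $kp > 1$) and $\mathcal{B} = BC^0(\R) \cap L^p(\R)$ (with $p \geq 1$). Conditions 1--3 are classical. For condition 1, the Sobolev--Morrey embedding gives $W^{k,p}(\R) \hookrightarrow BC^0(\R)$ with $\|w\|_\infty \leq C \|w\|_{W^{k,p}}$; replacing the norm by the equivalent one absorbing $C$ yields $\|w\|_\mathcal{B} \geq \|w\|_\infty$, and for $BC^0(\R) \cap L^p(\R)$ endowed with $\|w\|_\mathcal{B} = \|w\|_\infty + \|w\|_{L^p}$ the inequality is tautological. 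Condition 2 follows since $\tau_h$ preserves the Lebesgue measure and commutes with differentiation. Condition 3 is the standard strong continuity of translation in $L^p$ for $p < \infty$ (transferred to $W^{k,p}$ by applying it to each weak derivative), and is trivial in $BC^0(\R)$ thanks to uniform continuity.

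The substantive step is condition 4, which demands \eqref{condition_non_linearity}--\eqref{condition_non_linearity3} for every nonlinearity $F(U,x,t) = f_{(0,0)}(x,t) + \sum_{1 \leq |\alpha| \leq n} f_\alpha(x,t) U^\alpha$ in $V(\mathcal{B})$. The structural property to exploit is that $\mathcal{B}$ is a Banach algebra: $\|fg\|_\mathcal{B} \leq C_\mathcal{B} \|f\|_\mathcal{B} \|g\|_\mathcal{B}$. For $W^{k,p}(\R)$ with $kp > 1$ this is the classical algebra property of Sobolev spaces continuously embedded in $L^\infty$, provable by induction on $k$ using Leibniz and H\"older; for $BC^0(\R) \cap L^p(\R)$ it follows directly from $\|fg\|_\infty \leq \|f\|_\infty \|g\|_\infty$ and $\|fg\|_{L^p} \leq \|f\|_\infty \|g\|_{L^p}$. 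Given the algebra property, the telescoping identity $U^\alpha - V^\alpha = \sum_i P_{\alpha,i}(U,V)(U_i - V_i)$, where each $P_{\alpha,i}$ is a monomial of total degree $|\alpha| - 1$ in the components of $U$ and $V$, together with the uniform boundedness of $\|f_\alpha(\cdot,t)\|_\mathcal{B}$ in $t$, yields \eqref{condition_non_linearity} with a constant $k_1$ depending on $M$, $n$, $C_\mathcal{B}$ and $\sup_t \|f_\alpha(\cdot,t)\|_\mathcal{B}$. Condition \eqref{condition_non_linearity3} is obtained from the analogous pointwise polynomial identity combined with the $L^\infty$ bounds on the coefficients.

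The delicate condition is \eqref{condition_non_linearity2}, since the hypothesis is only $\|V\|_\infty \leq M$ rather than $\|V\|_\mathcal{B} \leq M$. The plan is to establish the intermediate estimate $\|V^m\|_\mathcal{B} \leq C(m, M) \|V\|_\mathcal{B}$ whenever $\|V\|_\infty \leq M$. For $BC^0(\R) \cap L^p(\R)$ this is immediate from $\|V^m\|_\infty \leq M^{m-1} \|V\|_\infty$ and $\|V^m\|_{L^p} \leq M^{m-1} \|V\|_{L^p}$. For $W^{k,p}(\R)$, expanding $D^j(V^m)$ by the Leibniz rule gives sums of products in which exactly one factor is a derivative of $V$ of order $j \leq k$ (controlled in $L^p$ by $\|V\|_{W^{k,p}}$) and the remaining $m-1$ factors are lower-order derivatives, which must be interpolated between $\|V\|_\infty$ and $\|V\|_{W^{k,p}}$ via the Gagliardo--Nirenberg inequality, producing a bound linear in $\|V\|_{W^{k,p}}$ with constant depending only on $M$. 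This is expected to be the main obstacle: a naive application of the Banach-algebra bound would give $\|V^m\|_\mathcal{B} \leq C_\mathcal{B}^{m-1} \|V\|_\mathcal{B}^m$, which is superlinear in $\|V\|_\mathcal{B}$ and therefore useless for extracting a Lipschitz constant from only an $L^\infty$ bound. Once the interpolation step is in place, multiplying by the coefficients $f_\alpha$ and summing yields \eqref{condition_non_linearity2}, completing the verification.
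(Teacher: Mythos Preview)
Your strategy is correct and matches the paper's core idea: conditions 1--3 are routine, and the crux for \eqref{condition_non_linearity2} in $W^{k,p}$ is a Gagliardo--Nirenberg interpolation that turns the naive superlinear algebra bound into one that is \emph{linear} in $\|V\|_{W^{k,p}}$ with constant depending only on $\|V\|_\infty$. The paper packages this as the bilinear product estimate $\|fg\|_{W^{k,p}}\le C(\|f\|_{W^{k,p}}+\|g\|_{W^{k,p}})(\|f\|_\infty+\|g\|_\infty)$ and then runs a structural induction on the recursive build-up of $V(\mathcal{B})$, whereas you work directly with the monomial form and a telescoping identity; both routes are equivalent, and yours is arguably more direct for polynomial nonlinearities while the paper's bilinear lemma is cleaner to reuse.

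One point to tighten: your description of the Leibniz expansion of $D^k(V^m)$ as having ``exactly one factor of order $j\le k$ with the remaining $m-1$ factors of lower order'' is not accurate (e.g.\ $D^4(V^2)$ contains $(D^2V)^2$). The correct bookkeeping is the standard one: for a term $\prod_i D^{j_i}V$ with $\sum j_i=k$, apply H\"older with exponents $pk/j_i$ and then Gagliardo--Nirenberg $\|D^{j_i}V\|_{L^{pk/j_i}}\le C\|V\|_{W^{k,p}}^{j_i/k}\|V\|_\infty^{1-j_i/k}$, so the product is bounded by $C\|V\|_{W^{k,p}}\,\|V\|_\infty^{m-1}$. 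This is exactly the computation behind the paper's Lemma~\ref{interpolation_lemma}(ii), and once stated this way your argument goes through without change.
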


\begin{proof}
See Appendix \ref{proof_prop_hip_local_existence_PAS}.
\end{proof}

\subsection{Well-posedness result}
Let us start this subsection by defining a relevant notion introduced in \cite{rauch-smoller}.
\begin{defi}[Contracting Rectangle]\label{defi_invariant_rectangles}
For $L,S >0$ let $R_{L,S}$ be the rectangle centered at $(0,0)$ defined as
\begin{equation*}
 R_{L,S}= [- L,L]\times [-S,S].
\end{equation*}
Given a vector-value function $H: \R^2\times\R\times [0,T] \to \R^2$ we say that the rectangle $R_{L,S}$ is contracting under the vector fields $H$ if for each $\vec{v} \in \partial  R_{L,S}$ and every outward normal unit vector $n(\vec{v})$ we have
\begin{equation*}
\sup_{x\in\R}n(\vec{v}) \cdot H(\vec{v},x,t) < 0, \quad t\in [0,T]
\end{equation*}
(if $\vec{v}$ is in the corner of the rectangle, we assume this is satisfied for all $n(\vec{v})$ in the closed cone outward normal to the boundary).
The function $H$ can be understood as a vector field that can change as we move in $(x,t)$.
\end{defi}
This notion is useful to study the well-posedness of the following equation \eqref{eqn_generalized_parabolic} \begin{equation*}
\partial_t U = A  \partial_x^2U + F(U,x,t),
\end{equation*}
where $U=(u_1, u_2)$, $A= \text{diag}(1,\rho)$, $\rho \geq 0$ and $F$ is a smooth $\R^2$ valued function. Using Green's function $G(x,t) = (g_1(x,t), g_\rho(x,t))$ we can write this problem as an integral equation 

\begin{equation}\label{equation_fixed_point}
U(x,t) = G(t-s) * U(x,s) + \int_s^t G(t-\tau-s) * F(U(\tau),\cdot , \tau) d\tau.
\end{equation}
For our application, we have to consider a nonlinear term $F(U,x,t)$ that might depend explicitly on $x$ and $t$, and therefore, the local and global existence results in \cite{rauch-smoller} do not apply directly. For this purpose, we impose the additional conditions on the function $F$ given by \eqref{condition_non_linearity}, \eqref{condition_non_linearity2}, \eqref{condition_non_linearity3} and \eqref{condition_non_linearity5}. The next result is a generalization of \cite[Theorem 2.1]{rauch-smoller}  to non-autonomous systems. The following result gives us part (i) of Theorem \ref{thm_well_posedness}.
\begin{theorem}
[Local result]\label{lemma_local_existence}
Let $\mathcal{B}$ an admissible Banach space and $F(V,x,t): \R^2\times \R\times \R^+\to \R^2$ be a nonlinear vector-valued function in $V(\mathcal{B})$. For any $U_0 \in \mathcal{B}\times \mathcal{B}$, there exists a constant $t^*>0$, depending only on $F$ and $\|U_0\|_\infty$ such that the initial value problem for \eqref{eqn_generalized_parabolic} with data $U(t_0) = U_0$ has a unique solution $U$ in $C([t_0,t_0+t^*];\mathcal{B}\times \mathcal{B})$.
\end{theorem}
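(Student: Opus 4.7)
I will prove local existence by a contraction mapping argument on the mild formulation \eqref{equation_fixed_point}. Fixing $s=t_0$, define the operator
$$(\mathcal{T}U)(t) := G(t-t_0)*U_0 + \int_{t_0}^t G(t-\tau)*F(U(\tau),\cdot,\tau)\,d\tau,$$
and seek a fixed point in the closed ball $\overline{B}_M$ of $C([t_0,t_0+t^*];\mathcal{B}\times\mathcal{B})$ of radius $M$ around $0$. The only linear estimate needed is the heat-kernel contraction \eqref{convolution_invariance}, which is built into the notion of admissible space and in particular does not require any regularizing effect in the $\rho=0$ component.

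\textbf{Self-mapping.} I would set $M := 2(\|U_0\|_{\mathcal{B}\times\mathcal{B}} + C^*)$, with $C^*$ the constant from \eqref{condition_non_linearity5}, and impose $t^*\leq 1$. Splitting $F(U,\cdot,\tau) = F(0,\cdot,\tau) + (F(U,\cdot,\tau)-F(0,\cdot,\tau))$, the inhomogeneous contribution is controlled by $C^*$ thanks to \eqref{condition_non_linearity5}. Item~1 of admissibility gives $\|U(\tau)\|_\infty \leq \|U(\tau)\|_{\mathcal{B}\times\mathcal{B}}\leq M$, so \eqref{condition_non_linearity2} and \eqref{convolution_invariance} yield
$$\left\|\int_{t_0}^t G(t-\tau)*\bigl(F(U(\tau),\cdot,\tau)-F(0,\cdot,\tau)\bigr)\,d\tau\right\|_{\mathcal{B}\times\mathcal{B}} \leq k_2(M)\, M\, t^*.$$
Combined with $\|G(t-t_0)*U_0\|_{\mathcal{B}\times\mathcal{B}}\leq\|U_0\|_{\mathcal{B}\times\mathcal{B}}$, shrinking $t^*$ below $1/(2k_2)$ keeps $\mathcal{T}\overline{B}_M \subseteq \overline{B}_M$.

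\textbf{Contraction and continuity.} For $U,V\in \overline{B}_M$, applying \eqref{condition_non_linearity} pointwise in $\tau$ together with \eqref{convolution_invariance} gives
$$\|\mathcal{T}U-\mathcal{T}V\|_{C([t_0,t_0+t^*];\mathcal{B}\times\mathcal{B})} \leq k_1(M)\, t^*\, \|U-V\|_{C([t_0,t_0+t^*];\mathcal{B}\times\mathcal{B})},$$
so a final reduction of $t^*$ below $1/k_1$ makes $\mathcal{T}$ a strict contraction, and Banach's theorem delivers a unique fixed point in $\overline{B}_M$. Continuity of $\mathcal{T}U$ in $t$ with values in $\mathcal{B}\times\mathcal{B}$, and in particular at $t_0$, follows from item~3 of the admissible-space definition: translation-continuity of the norm is precisely what makes the heat semigroup strongly continuous on $\mathcal{B}$, so $G(t-t_0)*U_0\to U_0$ as $t\to t_0^+$; continuity of the Duhamel term is then a standard dominated-convergence argument together with \eqref{condition_non_linearity5}.

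\textbf{Main obstacle.} The genuinely new difficulty over \cite[Theorem 2.1]{rauch-smoller} is that $F(0,\cdot,\tau)$ need not vanish and depends on $\tau$, so a priori the inhomogeneity could oscillate or grow badly in time. The hypotheses \eqref{condition_non_linearity}–\eqref{condition_non_linearity5} have been tailored to this setting: \eqref{condition_non_linearity5} yields a bound on the Duhamel integral which is uniform over any unit-length time interval, while \eqref{condition_non_linearity}–\eqref{condition_non_linearity3} furnish Lipschitz constants depending on the size $M$ of $U$ but not on $t$. Ensuring that every constant in the fixed-point argument is time-uniform, so that $t^*$ ultimately depends only on $F$ and on $\|U_0\|_\infty$, is the heart of the adaptation; once this bookkeeping is in place, the argument proceeds exactly as in the autonomous case.
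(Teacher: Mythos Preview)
Your contraction argument is essentially sound and closely parallels the first half of the paper's proof, but it does not deliver the statement as written: your $t^*$ depends on $\|U_0\|_{\mathcal{B}\times\mathcal{B}}$, not on $\|U_0\|_\infty$. You set $M=2(\|U_0\|_{\mathcal{B}\times\mathcal{B}}+C^*)$ and then choose $t^*$ in terms of $k_1(M),k_2(M)$; nothing in your argument converts this into a dependence on the sup-norm alone. Your closing paragraph identifies time-uniformity of the constants as ``the heart of the adaptation,'' but time-uniformity is orthogonal to the issue: the difficulty is the \emph{norm} on which $t^*$ is allowed to depend, and this distinction is exactly what makes the local result feed into the global one (the contracting rectangle controls $\|U(t)\|_\infty$, not $\|U(t)\|_{\mathcal{B}\times\mathcal{B}}$).

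The paper supplies the missing step as follows. After obtaining a solution on $[t_0,t_0+t^*]$ with $t^*=t^*(\|U_0\|_{\mathcal{B}\times\mathcal{B}})$, it runs the \emph{same} fixed-point argument in the admissible space $BC^0$, where $\|\cdot\|_{\mathcal{B}}=\|\cdot\|_\infty$, producing a solution $V\in C([t_0,t_0+t_1];BC^0\times BC^0)$ with $t_1=t_1(\|U_0\|_\infty)$ and a uniform bound $\|V(t)\|_\infty\leq 2\|U_0\|_\infty+2H_{t_0}$. Uniqueness in $BC^0$ gives $U=V$ on the overlap. If $t_1>t^*$, one must show $V$ actually stays in $\mathcal{B}\times\mathcal{B}$ up to $t_1$: here \eqref{condition_non_linearity2} is used in its intended asymmetric form (hypothesis $\|V\|_\infty\leq M$, conclusion in $\|\cdot\|_{\mathcal{B}\times\mathcal{B}}$) together with Gronwall to get a $t_2$-independent bound on $\|V(t)\|_{\mathcal{B}\times\mathcal{B}}$, which then allows a bootstrap in steps of fixed size $\eta$ until $t_1$ is reached. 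This two-tier argument---existence in $BC^0$ to fix the time, then propagation of $\mathcal{B}$-regularity---is absent from your proposal and is the substantive content of the theorem beyond a routine Picard iteration.
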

\begin{proof}
We first show that there exists $0<t^*<1$ depending only on $\|U_0\|_{\mathcal{B}\times \mathcal{B}}$ and $F$ such that \eqref{eqn_generalized_parabolic} has a unique solution in $C([t_0,t_0+t^*];\mathcal{B}\times \mathcal{B})$ and $\|U\|_{C([t_0,t_0+t^*];\mathcal{B}\times \mathcal{B})}\leq 2\|U_0\|_{\mathcal{B}\times \mathcal{B}}+2H_{t_0}$ where 
$$H_{t_0} = \sup_{t\in [t_0,t_0+1]}\left\|\int_{t_0}^{t} G(t-s-t_0)*F(0,\cdot,s) ds\right\|_{\mathcal{B}\times \mathcal{B}},$$
which is finite because of \eqref{condition_non_linearity5}. For any $t^* >0$, let us define the set
\begin{multline*}
\Omega = \Big\{ U\in C([t_0,t_0+t^*];\mathcal{B}\times \mathcal{B}) : \\
\left\|U(t)- G(t-t_0)*U_0 - \int_{t_0}^{t} G(t-s-t_0)*F(0,\cdot,s) ds\right\|_{\mathcal{B}\times \mathcal{B}} \leq \|U_0\|_{\mathcal{B}\times \mathcal{B}}  + H_{t_0}, t_0\leq t \leq t_0+t^* \Big\}.
\end{multline*}
If $U \in\Omega$, because of \eqref{convolution_invariance} we know $\|U(t)\|_{\mathcal{B}\times \mathcal{B}}\leq 2 \|U_0\|_{\mathcal{B}\times \mathcal{B}} + 2 H_{t_0} $, for $t_0\leq t \leq t_0+t^*$, so by condition \eqref{condition_non_linearity} there exists a constant $k>0$, independent of $t^*$ such that for $U,V\in \Omega$
\begin{equation}\label{analog28}
\|F(U(t),x,t) - F(V(t),x,t)\|_{\mathcal{B}\times \mathcal{B}}\leq k\|U(t)-V(t)\|_{\mathcal{B}\times \mathcal{B}}.
\end{equation}
Let $t^* = \frac{1}{2k}$, so that $t^*$ clearly depends only on $F$ and $\|U_0\|_{\mathcal{B}\times \mathcal{B}}$. We define the map $\Gamma$ from $C([t_0,t_0 +t^*];\mathcal{B}\times \mathcal{B})$ into itself by
$$\Gamma U(t) = G(t-t_0)*U_0 + \int_{t_0}^t G(t-s-t_0)*F(U(s),\cdot,s)ds.$$
We first show that $\Gamma$ maps the closed set $\Omega$ into itself. Using \eqref{convolution_invariance} and \eqref{analog28} with $V=0$, we have, for $U\in\Omega$
\begin{equation*}
\left\|\Gamma(U)(t) - G(t-t_0)*U_0 - \int_{t_0}^t G(t-s-t_0)*F(0,\cdot,s) ds\right\|_{\mathcal{B}\times \mathcal{B}}\leq k\int_{t_0}^t \|U(s)\|_{\mathcal{B}\times \mathcal{B}} ds.
\end{equation*}
Hence using $\|U(t)\|_{\mathcal{B}\times \mathcal{B}}\leq 2\|U_0\|_{\mathcal{B}\times \mathcal{B}} +  2H_{t_0}$ we obtain for $t_0\leq t \leq t_0+t^*$,
\begin{equation*}
\left\|\Gamma(U)(t)-G(t-t_0)*U_0 - \int_{t_0}^t G(t-s-t_0)*F(0,\cdot,s) ds\right\|_{\mathcal{B}\times \mathcal{B}}\leq k t^*( 2\|U_0\|_{\mathcal{B}\times \mathcal{B}}+ 2H_{t_0}) =\|U_0\|_{\mathcal{B}\times \mathcal{B}} +H_{t_0}
\end{equation*}
so that $\Gamma$ maps $\Omega$ into itself. Next, we show that $\Gamma$ is a contraction mapping on $\Omega$. If $U,V\in \Omega$, then
\begin{align*}
I& =\|\Gamma(U)(t)-\Gamma(V)(t)\|_{\mathcal{B}\times \mathcal{B}}\\
&\leq\int_{t_0}^t\|G(t-s-t_0)*(F(U(s),\cdot,s) -F(V(s),\cdot,s) )\|_{\mathcal{B}\times \mathcal{B}} ds\\
&\leq \int_{t_0}^t \|F(U(s),\cdot,s) -F(V(s),\cdot,s)\|_{\mathcal{B}\times \mathcal{B}} ds\\
&\leq k \int_{t_0}^t \|U(s)-V(s)\|_{\mathcal{B}\times \mathcal{B}} ds\\
&\leq k t^* \|U-V\|_{C([t_0,t_0+t^*];\mathcal{B}\times \mathcal{B})}\\
&\leq \frac{1}{2}\|U-V\|_{C([t_0,t_0+t^*];\mathcal{B}\times \mathcal{B})}.
\end{align*}
Taking the supremum on $t\in [t_0,t_0+t^*]$, we obtain that $\Gamma$ is a contraction and therefore has a unique fixed point in $\Omega$. However, this still leaves the possibility of finding solutions outside $\Omega$. The following proposition covers the property of uniqueness in a general time horizon.
\begin{proposition}[Uniqueness]
The solution of \eqref{eqn_generalized_parabolic} is unique in $C([t_0,t_0+T];\mathcal{B}\times \mathcal{B})$.
\end{proposition}
\begin{proof}
The argument is analog to \cite[Theorem 2.3]{rauch-smoller}. Let $U$, $\tilde{U}\in C([t_0,t_0+T];\mathcal{B}\times \mathcal{B})$ be two solutions of \eqref{equation_fixed_point}. Thus we have
\begin{equation*}
U(t)-\tilde{U}(t) = G(t-t_0) *(U(0)-\tilde{U}(0)) + \int_{t_0}^t G(t-s-t_0) * \left(F(U(s),\cdot,s) - F(\tilde{U}(s),\cdot,s)\right)ds. 
\end{equation*}
Then, if $M=\max \{\|U\|_{C([t_0,t_0+T];\mathcal{B}\times \mathcal{B})}, \|\tilde{U}\|_{C([t_0,t_0+T];\mathcal{B}\times \mathcal{B})} \}$ we get from \eqref{condition_non_linearity} that there exists $\tilde{k}_1=\tilde{k}_1(M)$ such that
$$\|U(t)-\tilde{U}(t)\|_{\mathcal{B}\times \mathcal{B}} \leq \|U(0)-\tilde{U}(0)\|_{\mathcal{B}\times \mathcal{B}} + \tilde{k}_1 \int_{t_0}^t \left\|U(s) -\tilde{U}(s)\right\|_{\mathcal{B}\times \mathcal{B}} ds $$
and by Gronwall's inequality, we obtain 
$$\|U(t) - \tilde{U}(t)\|_{\mathcal{B}\times \mathcal{B}} \leq e^{\tilde{k}_1 (t-t_0)} \|U(0)-\tilde{U}(0)\|_{\mathcal{B}\times \mathcal{B}},$$
which imply the uniqueness on $C([t_0,t_0+T];\mathcal{B}\times \mathcal{B})$. 
\end{proof}
To complete the proof of Theorem \ref{lemma_local_existence} we must extend the solution to an interval $t_0\leq t \leq t_0+t_1$ for a $0<t_1<1$ which depends on $\|U_0\|_\infty$ instead of $\|U_0\|_{\mathcal{B}\times \mathcal{B}}$. Since $\mathcal{B}\subset BC^0$, condition \eqref{condition_non_linearity3} and the above argument shows that there is a $t_1$ depending only on $\|U_0\|_\infty$ and $F$ and a solution $V\in C([t_0,t_0+t_1];BC^0\times BC^0)$ with $\|V(t)\|_\infty\leq 2 \|U_0\|_\infty + 2H_{t_0}$ for $t\in [t_0,t_0+t_1]$. By uniqueness of solution in $C([t_0,t_0+\min\{t^*,t_1\}];BC^0\times BC^0)$ we have $U=V$ for $t_0\leq t \leq t_0+ \min\{t^*,t_1\}$. If $t_1\leq t^*$ then the proof is completed since $V\in C([t_0,t_0+t_1];\mathcal{B}\times \mathcal{B})$ is the desired solution, so let us assume $t_1>t^*$.

In this last step, we will show that $V$  actually inherits the regularity of $U$ beyond $t^*$ and indeed $V \in C([t_0,t_0+t_1]; \mathcal{B}\times \mathcal{B})$. To prove the regularity of $V$ we will show that there exists an $\eta >0$ independent of $t_2\in [t_0,t_0+t_1]$ with the property that if $V\in C([t_0,t_0+t_2]; \mathcal{B}\times \mathcal{B})$ then $V\in C([t_0,\min\{t_0+t_2+\eta,t_1\}]; \mathcal{B}\times \mathcal{B})$. 
A finite number of applications of this result implies the regularity of $V$ all the way to $t_1$. The existence of $\eta$ will be achieved by obtaining an estimate for $\|V\|_{C([t_0,t_0+t_2]; \mathcal{B}\times \mathcal{B})}$ which is independent of $t_2$.

Let $t_2\in[t_0,t_0+t_1]$ and assume that
$V\in C([t_0,t_0+t_2]; \mathcal{B}\times \mathcal{B})$, $V$ is a solution of the integral equation  
$$V(t) = G(t-t_0)*U_0 + \int_{t_0}^t G(t-s-t_0)*F(V(s),\cdot,s)ds.$$
Taking norms on both sides yields
$$\|V(t)\|_{\mathcal{B}\times \mathcal{B}}\leq\|U_0\|_{\mathcal{B}\times \mathcal{B}}+\int_{t_0}^t \|F(V(s),\cdot,s) - F(0,\cdot,s)\|_{\mathcal{B}\times \mathcal{B}} ds + H_{t_0}.$$
By using \eqref{condition_non_linearity2} and the fact that $\|V(t)\|_\infty \leq 2\|U_0\|_\infty + 2H_{t_0}$ there is a $k_2$ so that 
$$\|F(V(s),\cdot,s)- F(0,\cdot,s)\|_{\mathcal{B}\times \mathcal{B}} \leq k_2\|V(s)\|_{\mathcal{B}\times \mathcal{B}}.$$ 
Gronwall's inequality applied to
\begin{align*} \|V(t)\|_{\mathcal{B}\times \mathcal{B}}
&\leq \|U_0\|_{\mathcal{B}\times \mathcal{B}}+ H_{t_0}+k_2 \int_{t_0}^t \|V(s)\|_{\mathcal{B}\times \mathcal{B}} ds 
\end{align*}
together with \eqref{condition_non_linearity5} yield a constant $C>0$, independent of $t_2\in [t_0,t_0+t_1]$, such that $\|V(t)\|_{\mathcal{B}\times \mathcal{B}}\leq C$ for $t_0\leq t \leq t_0+t_2$. In particular $\|V(t_2)\|_{\mathcal{B}\times \mathcal{B}}\leq C$.

From the proof of existence for $U$ in $[t_0,t^*]$, there exists $\eta >0$ only dependent on $C$ and $F$, and there exists $W\in C([t_2,\min\{ t_2+ \eta,t_1\}]; \mathcal{B}\times \mathcal{B})$ solution to the initial value problem \eqref{eqn_generalized_parabolic} starting at
$t_2$, with initial condition $W_0=V(t_2)$. Extend $V$ beyond $t_2$ as $V(t_2+s) := W(t_2+s)$ for $0\leq s \leq \min\{\eta,t_1-t_2\}$, implying that $V\in C([t_0,\min\{t_0+t_2+\eta,t_1\}]; \mathcal{B}\times \mathcal{B})$. Since $\eta>0$ independent of $t_2$ allows us to continue this process until reaching $t_1$, the proof of Theorem \ref{lemma_local_existence} is completed.
\end{proof}

Theorem \ref{lemma_local_existence} tells us that if we have constructed a solution $U$ of \eqref{eqn_generalized_parabolic} for $t\in [0,T]$, this solution can be extended to the interval $[0,T+t^*]$ where $t^*$ only depends on $F$ and $\|U(T)\|_\infty$. If in this process $\|U\|_\infty$ remains uniformly bounded in time, then the solution can be continued for all $t\in[0,\infty)$, which is what we will do in the following theorem under some additional assumptions. The following result is an extension of \cite[Theorem 3.9]{rauch-smoller}  to the non-autonomous case and gives part (ii) of Theorem \ref{thm_well_posedness}.
\begin{theorem}
[Global result]\label{lemma_global_existence}
Assume that, in addition to being continuous, the functions in $\mathcal{B}$ also decay to 0 at $\pm \infty$. Assume that $U_0\in \mathcal{B}\times \mathcal{B}$ and that there exists some $\hat{\epsilon} >0$ and a rectangle $R_{L,S}$ such that 
$$(1+\hat{\epsilon})U_0(x) \in R_{L,S}, \quad \forall x\in\R,$$ 
where the rectangle $R_{L,S}$ is contracting under $F(U,x,t)$ for all $t >0$. Then, there is a unique solution $U\in C([0,\infty);\mathcal{B} \times \mathcal{B})$ of \eqref{eqn_generalized_parabolic} with $U(0) = U_0$. Additionally $U$ also satisfies
$$U(x,t) \in R_{L,S},\quad \forall x\in\R, t\geq 0.$$
\end{theorem}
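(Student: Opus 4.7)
The plan is to combine Theorem~\ref{lemma_local_existence} with a non-autonomous extension of the Rauch--Smoller invariance principle for the rectangle $R_{L,S}$. Once we show that the solution stays in $R_{L,S}$ throughout its maximal lifespan, we obtain the uniform bound $\|U(\cdot,t)\|_\infty \leq \max(L,S)$; since the local existence time in Theorem~\ref{lemma_local_existence} depends only on $F$ and on $\|U(t_0)\|_\infty$, this bound allows us to iterate local existence with a uniformly positive time step and thereby extend the solution to all of $[0,\infty)$. Uniqueness on every finite interval is already contained in Theorem~\ref{lemma_local_existence}.

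The heart of the argument is the invariance of $R_{L,S}$. I would first use standard bootstrapping of the Duhamel formula \eqref{equation_fixed_point}, together with the smoothing of the Gaussian kernel, to obtain enough classical regularity to make pointwise manipulation of $\partial_t U$ and $\partial_x^2 U$ legitimate at $x$-extrema of $U$. Set
\[
T^* := \sup\bigl\{\, T > 0 \,:\, U(x,t) \in R_{L,S} \text{ for every } x\in\R,\; t\in [0,T]\, \bigr\}.
\]
The hypothesis $(1+\hat\epsilon)U_0 \in R_{L,S}$ places $U_0$ strictly in the interior of the rectangle, and continuity of the local solution in $L^\infty$ ensures $T^* > 0$. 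Suppose for contradiction that $T^* < \infty$. By continuity there must exist $x_1 \in \R$ with $U(x_1, T^*) \in \partial R_{L,S}$; because $\mathcal{B}$-functions decay to $0$ at $\pm\infty$ and $(0,0)$ lies in the interior of $R_{L,S}$, such an $x_1$ can be taken to be a genuine real number rather than a point approached only as $|x|\to\infty$. Pick any outward unit normal $n=(n_1,n_2)$ at $U(x_1, T^*)$; at a corner, any vector in the outward normal cone works, since Definition~\ref{defi_invariant_rectangles} is assumed to hold for all such.

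Each component $u_i$ with $n_i\neq 0$ attains a spatial extremum at $x_1$ (a maximum if $n_i>0$, a minimum if $n_i<0$), so $n_i\, \partial_x^2 u_i(x_1,T^*) \leq 0$. Since $A=\mathrm{diag}(1,\rho)$ with $\rho \geq 0$, summing gives $n \cdot A\partial_x^2 U(x_1,T^*) = n_1 \partial_x^2 u_1(x_1,T^*) + \rho n_2 \partial_x^2 u_2(x_1,T^*) \leq 0$; the degenerate case $\rho=0$ is harmless because the second term then simply vanishes. Since $U(x_1,t)$ remained inside $R_{L,S}$ for $t<T^*$, temporal maximality from the left forces $n\cdot \partial_t U(x_1,T^*)\geq 0$. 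The contracting rectangle hypothesis provides the strict inequality $n\cdot F(U(x_1,T^*), x_1, T^*)<0$. Substituting into the PDE \eqref{eqn_generalized_parabolic} produces
\[
0 \leq n\cdot \partial_t U(x_1,T^*) = n\cdot A\partial_x^2 U(x_1,T^*) + n\cdot F(U(x_1,T^*), x_1, T^*) < 0,
\]
a contradiction. Hence $T^* = \infty$ and $U(x,t) \in R_{L,S}$ for all $(x,t)$.

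The main obstacle I anticipate is the rigorous handling of two subtleties. First, the supremum defining the boundary contact must be attained at a finite point $x_1 \in \R$; the decay hypothesis on $\mathcal{B}$ combined with $(0,0)$ lying in the interior of $R_{L,S}$ is tailor-made for this. Second, in the degenerate case $\rho=0$ the second component satisfies merely an $x$-parametrized ODE, so the classical regularity usually obtained from parabolic smoothing is weaker for $u_2$; fortunately the inequality chain above does not require any second-derivative information on $u_2$ when $\rho=0$, so the contradiction still closes via the strict contracting sign on $F_2$ alone. Should any strict inequality become delicate at the limit instant $T^*$, a safety device is to introduce an auxiliary system with an extra inward-pushing term of size $\sigma>0$, run the contradiction with a positive margin, and pass to the limit $\sigma \to 0^+$ using continuous dependence.
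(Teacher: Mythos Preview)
Your argument is correct and complete in outline, but it follows a genuinely different route from the paper's proof. The paper does not work pointwise at a first touching instant; instead it introduces the Minkowski gauge of the rectangle,
\[
\|v\|_X = \sup_{x\in\R}\inf\{r>0:\, v(x)\in r R_{L,S}\},\qquad E(t)=\|U(\cdot,t)\|_X,
\]
and studies the upper Dini derivative $\bar D E(t)$. Citing \cite[Lemma~3.8]{rauch-smoller}, it shows that whenever $E(\hat t)=1$ one has $\bar D E(\hat t)<0$, so the scalar function $E$ can never reach~$1$; iteration of the local result in steps of fixed size $t^*/2$ then gives global existence. By contrast, you locate a specific $(x_1,T^*)$ where the boundary is hit, and use the spatial second-derivative test together with left-sided temporal monotonicity to contradict the strict sign of $n\cdot F$.

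What each approach buys: the paper's gauge/Dini-derivative argument operates at the level of the mild solution and requires no classical pointwise regularity of $U$, so it applies directly in every admissible $\mathcal{B}$ (including $BC^0\cap L^p$) without the bootstrap you sketch. Your touching-point argument is more elementary and self-contained (no external lemma from \cite{rauch-smoller}), but it leans on the heat-kernel smoothing to justify $\partial_x^2 u_1$ and $\partial_t U$ at $(x_1,T^*)$; you correctly note that when $\rho=0$ no second spatial derivative of $u_2$ is needed, and your $\sigma$-perturbation fallback is the standard remedy if regularity at the exact touching instant is delicate. Both proofs use the decay-to-zero hypothesis on $\mathcal{B}$ in the same essential way, to guarantee that the boundary contact (your $x_1$, the paper's sup in $\|\cdot\|_X$) is attained at a finite point.
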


\begin{proof}
Let $R_{L,S}$ be a rectangle satisfying the hypothesis of this theorem. Define a  norm in the space 
\begin{equation*}
X= \{\vec{v}\in C(\R;\R^2) : \vec{v} \text{ is a continuous function that} \text{ converges to }(0,0) \text{ when }x \to \pm \infty\},
\end{equation*}
given by
\begin{equation}\label{definition_norm_X}
\|\vec{v}\|_X = \sup_{x\in\R}\inf\{r>0: \vec{v}(x)\in r R_{L,S}\}.
\end{equation}
If $W\in \mathcal{B}\times \mathcal{B}$, observe that $\|W\|_X<1$ is equivalent to $[~\exists \hat{\epsilon}>0: (1+\hat{\epsilon})W(x)\in R_{L,S}, \forall x\in\R~]$. We also have that $\|W\|_X<1$ implies $\|W\|_\infty\leq (L+S)/2$, so the assumptions in $U_0$ imply $\|U_0\|_X<1$ and $\|U_0\|_\infty\leq (L+S)/2.$

Fix $t_0=0$ and let $U\in C([0,t^*];\mathcal{B}\times \mathcal{B})$ be the local solution given by Theorem \ref{lemma_local_existence} with $U(0)=U_0$, noting that $t^*$ can be chosen depending only on $F$ and $(L+S)/2$. Define 
$$E(t) = \|U(\cdot,t)\|_X$$
and observe that $E(0) < 1$. Let us look at the upper Dini derivative of $E$, defined as
$$\bar{D} E(t) = \limsup_{h\to 0}\frac{E(t+h)-E(t)}{h}, \text{ for } t\in(0,t^*).$$
Since $\partial R_{L,S}$ and the set of outward pointing unit normals to $R_{L,S}$ at the boundary are compact, then $R_{L,S}$ being contracting implies that there exists $\eta >0$ such that $\sup_{x\in\R}H(W,x,\hat{t}) \cdot n(W) < -\eta$ for any $W\in \partial R_{L,S}$ and $n(W)$ outward unit vector to $\partial R_{L,S}$ at $W$. Therefore, if $E(\hat{t})=1$ then \cite[Lemma 3.8]{rauch-smoller} (the condition that functions in $\mathcal{B}$ vanish at $\pm\infty$ is used here) would imply
$$\bar{D} E(\hat{t}) \leq -\frac{2\eta}{\min\{2L, 2S\}} E(\hat{t})<0.$$
Since the continuous quantity $E(t),~ t\in[0,t^*)$, satisfies $E(0)<1$ and that $\bar{D} E(t)<0$ if ever $E(t)=1$, then it follows that $E(t)<1$ for all $t\in [0,t^*)$. In particular, $E(\frac{t^*}{2})=\|U(\cdot, \frac{t^*}{2})\|_X <1$. We can now repeat the same argument above, but starting at $t_0=\frac{t^*}{2}$, obtaining that the solution $U$ can be extended to exist in all of $[0,\frac{t^*}{2}+t^*]$ and that $E(t)<1$ $\forall t \in [0,\frac{3t^*}{2})$. And we can repeat this argument ad infinitum, advancing on steps of size $\frac{t^*}{2}$ each time, concluding that the solution $U$ can be extended to exist for all $t \in [0,\infty)$.
\end{proof}

\section{Stability for the partially averaged system}\label{section_estimates}
The goal of this section is to prove Theorem \ref{thm_estimates_pas}. This will be done by showing that some suitable small rectangles are contracting with respect to the vector field associated with the Partially Averaged System \eqref{FHN_pde_pas} and applying Theorem \ref{thm_well_posedness}. Some technical conditions are required on the set of parameters. This is described in the following result.

\begin{lemma}

\label{properties_admissible_parameters}
Let us recall that $\beta, \gamma >0 $ are said to be admissible if they satisfy \eqref{condition_H1}. The set of admissible parameters is non-empty, and for any $0<\delta<1/4$, it contains the set
\begin{equation}\label{set_inclusion_parameters}
\Big\{(\beta,\gamma): \gamma \geq \frac{2\delta +1}{\delta}, \beta \geq \frac{2}{3}\gamma\Big\}.\end{equation}
Additionally, if $\beta$, $\gamma$ are admissible then we have the following:
\begin{enumerate}
\item System \eqref{defi_v0w0}  has a unique solution $(v_0, w_0)$. \label{lemma_admis_1}
\item For $\beta,\gamma$ admissible, let $\delta>0$ be such that \eqref{condition_H1} is fulfilled. Then the corresponding solution $(v_0, w_0)$ satisfies the bounds \label{lemma_admis_2}
\begin{equation}\label{condition_v0}
\min \{-\beta, -\sqrt{3}\}\leq v_0 <-\sqrt{1+\frac{1}{\delta\gamma}}<0.
\end{equation}
These bounds readily imply that
$$\frac{1}{\max\{\beta^2-1,2\}}\leq \frac{1}{(v_0)^2-1} < \delta \gamma.$$
\end{enumerate}
\end{lemma}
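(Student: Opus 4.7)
The plan is to reduce system \eqref{defi_v0w0} to a single cubic in $v_0$ by eliminating $w_0$, recognize the two conditions in \eqref{condition_H1} as (a) the negative-discriminant condition of the resulting cubic and (b) a positivity condition on that cubic at a specific test point, and then verify the inclusion \eqref{set_inclusion_parameters} by direct manipulation.

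First, solving $w_0 = (v_0+\beta)/\gamma$ from the second equation of \eqref{defi_v0w0} and substituting into the first leads, after clearing denominators, to the depressed cubic
\begin{equation*}
P(v_0) := \gamma v_0^3 + 3(1-\gamma)v_0 + 3\beta = 0.
\end{equation*}
A short calculation identifies the Cardano discriminant of the normalized form $v_0^3 + p v_0 + q$ (with $p = 3(1-\gamma)/\gamma$, $q = 3\beta/\gamma$) with a negative multiple of $(1-\gamma)^3/\gamma^3 + (9/4)\beta^2/\gamma^2$. Hence the first inequality in \eqref{condition_H1} is exactly the condition that $P$ has a unique real root, which gives assertion 1. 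Under this condition the sign structure $P(v) < 0 \iff v < v_0$ holds since $P$ has positive leading coefficient and no other real roots.

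For the bounds on $v_0$, I evaluate $P$ at the three candidate points $-\beta$, $-\sqrt{3}$, and $v^\ast := -\sqrt{1+1/(\delta\gamma)}$. Direct computation yields $P(-\beta) = \gamma\beta(3-\beta^2)$ and $P(-\sqrt{3}) = 3(\beta - \sqrt{3})$; in the cases $\beta\geq \sqrt 3$ and $\beta<\sqrt 3$ respectively, one of these is $\leq 0$, giving $v_0 \geq \min\{-\beta,-\sqrt{3}\}$ unconditionally. Using $(v^\ast)^2 = 1+1/(\delta\gamma)$, so that $\gamma(v^\ast)^2 + 3(1-\gamma) = -\gamma\bigl(2-(3+1/\delta)/\gamma\bigr)$, one obtains the key identity
\begin{equation*}
\frac{P(v^\ast)}{\gamma} \;=\; \left(1+\tfrac{1}{\delta\gamma}\right)^{\!1/2}\!\left(2 - \tfrac{3+1/\delta}{\gamma}\right) + \tfrac{3\beta}{\gamma},
\end{equation*}
which coincides exactly with the left-hand side of the second inequality in \eqref{condition_H1}. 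Therefore $P(v^\ast)>0$, and so $v_0 < v^\ast < 0$. Squaring yields $1/(v_0^2-1) < \delta\gamma$, and combining with $|v_0| \leq \max\{\beta,\sqrt{3}\}$ (from the lower bound) together with $v_0^2 - 1 > 0$ gives the remaining estimate $1/(v_0^2-1) \geq 1/\max\{\beta^2-1,2\}$.

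Finally, to verify the inclusion \eqref{set_inclusion_parameters} (which in particular yields non-emptiness): the algebraic identity $1 + (1-\gamma)^3/\gamma^3 = (1-3\gamma+3\gamma^2)/\gamma^3 = (3(\gamma-1/2)^2 + 1/4)/\gamma^3$ is positive for every $\gamma>0$, and $\beta \geq (2/3)\gamma$ gives $(9/4)\beta^2/\gamma^2 \geq 1$; summing, the first inequality in \eqref{condition_H1} holds. For the second inequality, $\gamma \geq (2\delta+1)/\delta$ implies $(3+1/\delta)/\gamma \leq (3\delta+1)/(2\delta+1)$, hence $2 - (3+1/\delta)/\gamma \geq (1+\delta)/(2\delta+1) > 0$; since all remaining terms are non-negative, the second inequality also follows. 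The main obstacle is algebraic rather than conceptual: pinpointing the exact factorization of $P(v^\ast)$ that reproduces the slightly intricate second condition in \eqref{condition_H1}; once this factorization is spotted, the rest reduces to straightforward verification.
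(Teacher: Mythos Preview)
Your proof is correct and follows essentially the same route as the paper's: reduce \eqref{defi_v0w0} to the depressed cubic, identify the first condition in \eqref{condition_H1} with uniqueness of the real root, and read off the bounds on $v_0$ by evaluating the cubic at $-\beta$, $-\sqrt{3}$, and $-\sqrt{1+1/(\delta\gamma)}$. The only cosmetic differences are that the paper works with $h(v)=P(v)/\gamma$ and, for the first inequality in the set inclusion, simply invokes $(\gamma-1)/\gamma<1$ rather than your completed-square identity; your verification of the second inequality directly at the stated threshold $\gamma\geq(2\delta+1)/\delta$ is also slightly more self-contained than the paper's (which first obtains the sharper sufficient bound $\gamma\geq(3\delta+1)/(2\delta)$).
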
\begin{proof}
See Appendix \ref{app_properties_admissible_parameters}
\end{proof}

\subsection{Existence of contracting rectangles}
In order to study system \eqref{FHN_pde_pas} we have to consider the following vector valued function $H:\R^2\times\R\times[0,\infty)\to\R^2$,
\begin{align}\label{def_vector_field_H}
H((V,W),x,t) &= \binom{H_1((V,W),x,t)}{H_2((V,W),x,t)}\\
H_1((V,W),x,t) &=
(1-v_0^2) V  -v_0 V^2 - \frac{V^3}{3}  - W - (\frac{A(x)^2}{2}+\frac{B(x)^2}{2} + A(x) B(x) \cos(\eta t))(V+v_0) \notag \\
H_2((V,W),x,t) &= \varepsilon(V - \gamma W)  \notag
\end{align}
which clearly belongs to $V(\mathcal{B})$ given by Definition \ref{defi_space_vB} for the spaces $\mathcal{B}$ in Proposition \ref{prop_hip_local_existence_PAS}. The following result is key to addressing the behavior of the solutions of \eqref{FHN_pde_pas}.
 
\begin{lemma}[Existence of small Contracting Rectangles]\label{lemma_invariant_rectangles} Let $\varepsilon >0$, let $\beta$,  $\gamma$ satisfy \eqref{condition_H1} and let $(v_0,w_0)$ be the unique solution of \eqref{defi_v0w0}. Given $A(x),~B(x)$ recall the definition in equation \eqref{decay_AxBx} and define 
$$\Delta = \Delta(A,B):=\sup_x|A(x)|+ \sup_x|B(x)| = \frac{|a|}{d_1^2} + \frac{|b|}{d_2^2}.$$ 
There exists $\Delta^*>0$ such that for any $A,B$ satisfying $0\leq \Delta\leq \Delta^*$ there is a non empty set $D(\Delta)\subset\R_+^2$ such that for all $(L,S)\in D(\Delta)$ the rectangles $R_{L,S}$
are contracting under the flow of $H((V,W),x,t)$ given by \eqref{def_vector_field_H}. Additionally, the set $D(\Delta)$ satisfies the inclusion
\begin{equation}\label{SetLS}
D(\Delta) \supseteq \biggl\{(L,S): 0<L<|v_0|, ~~\frac{1}{\gamma} < \frac{S}{L} < v_0^2-1, L < \frac{(v_0^2-1-S/L)}{\Delta^2(-v_0-L)/L^2+(-v_0-L/3)}\biggl\}.
\end{equation}
Moreover, given any open neighborhood $\mathcal{O}$ of $(0,0)$ there exists $N>0$ such that for all $\Delta \leq N$ there exists $(L,S) \in D(\Delta)$ such that $R_{L,S}\subset \mathcal{O}$. 
\end{lemma}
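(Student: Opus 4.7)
The plan is to verify the contracting rectangle condition face by face on $R_{L,S}=[-L,L]\times[-S,S]$, derive explicit sufficient inequalities on $(L,S,\Delta)$, and then show they can be met with $R_{L,S}$ arbitrarily small when $\Delta$ is taken sufficiently small. Write $\phi(x,t):=\tfrac{A(x)^2}{2}+\tfrac{B(x)^2}{2}+A(x)B(x)\cos(\eta t)$. Two elementary facts drive everything: $\phi\ge 0$ (recognize $\phi=\tfrac12|A(x)+B(x)e^{i\eta t}|^2$) and $\phi(x,t)\le\tfrac12(|A(x)|+|B(x)|)^2\le\tfrac{\Delta^2}{2}$. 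Since at a corner of $R_{L,S}$ the outward normal cone is a convex cone whose extreme rays are the adjacent side normals, and since $n\cdot H$ is linear in $n$, verifying strict negativity along each of the four sides is enough.

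The top face $W=S$ (normal $(0,1)$) requires $\varepsilon(V-\gamma S)<0$ for every $V\in[-L,L]$, which gives $S/L>1/\gamma$; the bottom face is symmetric. On the left face $V=-L$ (normal $(-1,0)$) I need $H_1(-L,W,x,t)>0$, and a direct expansion yields
\[
H_1(-L,W,x,t) = (v_0^2-1)L + \phi\,(L+|v_0|) + |v_0|L^2 + \tfrac{L^3}{3} - W \;\ge\; (v_0^2-1)L - S,
\]
using $\phi\ge 0$ and $v_0<0$ from Lemma~\ref{properties_admissible_parameters}; this is positive whenever $S/L<v_0^2-1$.

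The right face $V=L$ (normal $(1,0)$) is the delicate one. There,
\[
H_1(L,W,x,t) = -L\bigl[(v_0^2-1) - |v_0|L + \tfrac{L^2}{3}\bigr] - \phi(L+v_0) - W,
\]
and under the hypothesis $L<|v_0|$ the factor $L+v_0$ is negative, so $-\phi(L+v_0)=\phi\,(|v_0|-L)\ge 0$. Worst-casing $\phi=\Delta^2/2$ and $W=-S$ produces the sufficient contracting condition
\[
L\bigl(|v_0|-\tfrac{L}{3}\bigr) + \frac{\Delta^2(|v_0|-L)}{2L} \;<\; v_0^2-1 - \frac{S}{L}.
\]
Multiplying the last inequality of~\eqref{SetLS} by its (positive) denominator rearranges it to the same inequality but with $\Delta^2$ in place of $\Delta^2/2$, which is strictly stronger. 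Hence every $(L,S)$ lying in the right-hand side of~\eqref{SetLS} does yield a contracting rectangle.

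For the nonemptiness and smallness claim, Lemma~\ref{properties_admissible_parameters}(\ref{lemma_admis_2}) applied with $\delta<1/4$ gives $v_0^2>1+1/(\delta\gamma)>1+1/\gamma$, so the window $1/\gamma<S/L<v_0^2-1$ is nonempty. Fix $S/L=1/\gamma+\kappa$ for a small $\kappa>0$ inside this window: the right-hand side of the displayed inequality becomes a positive constant $C_\kappa$ independent of $L$ and $\Delta$, while the left-hand side is bounded by $L|v_0|+\Delta^2|v_0|/(2L)$. Balancing $L=c\Delta$ (AM-GM suggests $c=1/\sqrt{2}$) makes the left side $O(\Delta)$ and therefore smaller than $C_\kappa$ once $\Delta\le\Delta^*$ is small enough; the resulting rectangle has both $L$ and $S$ of order $\Delta$, so further shrinking $\Delta$ places $R_{L,S}$ inside any prescribed neighborhood $\mathcal{O}$ of the origin. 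The main obstacle is the careful sign bookkeeping on the right face and recognizing the sharp bound $\phi\le\Delta^2/2$ coming from the amplitude structure of the interferential source; this bound is what ultimately governs the $\Delta^2$-scaling that appears in~\eqref{SetLS}.
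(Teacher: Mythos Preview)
Your proof is correct and follows the paper's face-by-face verification; you even notice that the paper over-estimates $\phi$ by a factor of $2$ on the right face, so the set in \eqref{SetLS} is indeed a (slightly wasteful) subset of the true contracting region. The only substantive difference is in the final smallness claim: you scale $L=c\Delta$ so the rectangle shrinks with $\Delta$, whereas the paper fixes a small $L_0$ (already inside $\mathcal{O}$) and then sends $\Delta\to 0$ in the right-face inequality---both arguments work, though yours should include a one-line remark that the degenerate case $\Delta=0$ is handled by any sufficiently small $L>0$.
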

\begin{remark}
In the case $\Delta = 0$, the existence of arbitrarily small contracting rectangles for $v_0^2-1>\frac{1}{\gamma}$ was obtained in the article \cite{rauch-smoller}.
\end{remark}

\begin{proof}
We want to show that if $\Delta:=\sup_x|A(x)| +\sup_x|B(x)|$  is small enough, then the set of pairs $(L,S)\in \R_+^2$ such that the rectangle $R_{L,S}$ is contracting is non-empty. For this purpose, we have to verify that the vector field $H((V,W),x,t)$ is pointing inwards at each point of the boundary $\partial R_{L,S}$. On each face, we can write the outward pointing vector explicitly, and in the corners, it can only be a linear combination of the vector used in the adjacent faces. 
\begin{enumerate}
\item Top face. At $W = S$, $V\in [-L,L]$ we have
\begin{align*}
(0,1) \cdot H((V,W),x,t) &= \varepsilon( V- \gamma S)\\
&\leq \varepsilon (L-\gamma S).
\end{align*}
Therefore, the vector field will point inwards if $L-\gamma S <0$, or equivalently $\frac{1}{\gamma}<\frac{S}{L}$.
\item Bottom face. At $W =-S$, $V\in [-L,L]$ we have
\begin{align*}
(0,-1) \cdot H((V,W),x,t) &= -\varepsilon( V+ \gamma S)\\
&\leq \varepsilon (L-\gamma S).
\end{align*}
Therefore, we get the same condition as for the top face.
\item Left face. Assume $0< L<-v_0$. For $V = - L$, $W\in [-S,S]$ we get the following
\begin{align*}
I &= (-1,0) \cdot H((V,W),x,t)\\
&= -\Big((1-v_0^2)(-L) -v_0 L^2 - (-L)^3/3 - W - (v_0-L)(A(x)^2/2+B(x)^2/2+A(x)B(x)\cos(\eta t)) \Big)\\
&\leq -\left( (v_0^2-1)L - v_0 L^2+L^3/3 -\frac{(v_0-L)}{2}(|A(x)|-|B(x)|)^2 -S \right)\\
&= -\left((v_0^2 -1-\frac{S}{L})L+(-v_0 +L/3)L^2\right)-\left(\frac{(-v_0+L)}{2}(|A(x)|-|B(x)|)^2  \right)\\
&\leq -(v_0^2 -1-\frac{S}{L})L \quad(\text{ since } 0<L<-v_0).
\end{align*}
We want to choose $L$ and $S$ such that the right-hand side is negative, which will be the case if the following equation holds
\begin{equation}\label{condition_left_face}
v_0^2 -1-\frac{S}{L}>0.
\end{equation}
Because of \eqref{condition_v0} we have that $v_0^2 -1 -\frac{1}{\delta\gamma} >0$ for $\delta>0$ corresponding to the admissibility of $\beta,\gamma$. But since $\delta \leq 1$ (we are restricting to $\delta <1/4$) then $v_0^2 -1 -\frac{1}{\gamma} >0$. Therefore condition \eqref{condition_left_face} is satisfied if
\begin{equation}\label{condition_final_left_face}
0<L<-v_0 \text{ and }\frac{1}{\gamma} < \frac{S}{L} < v_0^2-1. 
\end{equation}
\item Right face. Assume $0<L<-v_0$. For $V = L$,  $W\in [-S,S]$ we get the following
\begin{align*}
(1,0) \cdot H &= (1-v_0^2)L -v_0 L^2 - L^3/3 - W-(v_0+L)(A(x)^2/2+B(x)^2/2+A(x)B(x)\cos(\eta t))\\
&\leq (1-v_0^2)L -v_0 L^2 - L^3/3 -\frac{(v_0+L)}{2}(|A(x)|+|B(x)|)^2 +S\\
&=(|A(x)|+|B(x)|)^2\frac{(-v_0-L)}{2}  +(-v_0 -L/3)L^2 - \left(v_0^2  -1-\frac{S}{L}\right)L\\
&\leq \Delta^2 (-v_0-L)  +(-v_0 -L/3)L^2  - \left(v_0^2  -1-\frac{S}{L}\right)L.
\end{align*}
Since $\Delta^2(-v_0-L)  +(-v_0 -L/3)L^2>0$, the right-hand side in the equation above will be negative if 
\begin{equation}\label{ineq_final_right_face}
L < \frac{(v_0^2-1-\frac{S}{L})}{ \Delta^2(-v_0-L)/L^2+(-v_0-L/3)}.
\end{equation}
\end{enumerate}
Combining \eqref{condition_final_left_face} and \eqref{ineq_final_right_face} we obtain the following set of sides $(L,S)$ for which $R_{L,S}$ is a contracting rectangle under the flow $H((V,W),x,t)$,
\begin{equation*}
D(\Delta) \supseteq \biggl\{(L,S): 0<L<|v_0|, ~~\frac{1}{\gamma} < \frac{S}{L} < v_0^2-1, L < \frac{(v_0^2-1-S/L)}{\Delta^2(-v_0-L)/L^2+(-v_0-L/3)}\biggl\}.
\end{equation*}
For the last part of the lemma let us consider $\mathcal{O}$ an arbitrary neighborhood of $(0,0)$ and take $\epsilon >0$ small so that $\frac{1+\epsilon}{\gamma} < v_0^2-1$ and set $S_0=\frac{1+\epsilon}{\gamma}L_0$. Next, take $0<L_0<|v_0|$ small enough so that $(\pm L_0,\pm S_0)\in \mathcal{O}$ and
$$ 0<L_0 <  \frac{(v_0^2-1-\frac{1+\epsilon}{\gamma})}{(-v_0-L_0/3)}.$$
Finally, notice that the limit 
$$\lim_{\Delta\to 0}\left(\frac{(v_0^2-1-S_0/L_0)}{\Delta^2(-v_0-L_0)/L_0^2+(-v_0-L_0/3)} - L_0 \right) = \frac{(v_0^2-1-\frac{1+\epsilon}{\gamma})}{(-v_0-L_0/3)} - L_0 > 0,$$
which implies that for $\Delta>0$ small enough the pair $(L,S)=(L_0,\frac{1+\epsilon}{\gamma}L_0)\in D(\Delta)$ satisfy $R_{L,S}\subset \mathcal{O}$. This concludes the proof of Lemma \ref{lemma_invariant_rectangles}.
\end{proof}

\subsection{Proof of Theorem \ref{thm_estimates_pas}}

Suppose the parameters $\varepsilon >0$ and assume $\beta$, $\gamma$ satisfy \eqref{condition_H1} and $H$ be the vector valued function given by \eqref{def_vector_field_H}.
From Lemma \ref{lemma_invariant_rectangles} we know that given a neighborhood $\mathcal{O}$ of $(0,0)$ there exists $N>0$ such that if 
$$\Delta= \frac{|a|}{d_1^2}+\frac{|b|}{d_2^2} < N$$
then there exists a contracting rectangle $R_{L,S} \subset \mathcal{O}$ for the vector valued function $H$.
Now, thanks to Proposition \ref{prop_hip_local_existence_PAS} we know that the space $\mathcal{B}$ satisfy conditions \eqref{condition_non_linearity}, \eqref{condition_non_linearity2}, \eqref{condition_non_linearity3} and \eqref{condition_non_linearity5}. Thus, we can apply Theorem \ref{thm_well_posedness} to conclude that if the initial condition satisfies  $(f_0-v_0, g_0-w_0)\in \mathcal{B}\times \mathcal{B}$ and for some $\epsilon >0$
$$(1+\epsilon)(f_0(x)-v_0,g_0(x)-w_0) \in R_{L,S}, \quad \forall x\in \R$$
 then there exists a unique solution $(V,W)\in C([0,\infty), \mathcal{B}\times \mathcal{B})$ for the initial value problem \eqref{FHN_pde_pas} with initial data $V(0) = f_0-v_0$, $W(0) = g_0-w_0$. Moreover, the solution satisfies
$$(V,W)\in R_{L,S}, \quad \forall x\in \R, t\geq 0.$$
This concludes the proof of Theorem \ref{thm_estimates_pas}
\qed

Some immediate properties for the solution $(V,W)$ are included in the following proposition.
\begin{proposition}[Properties of the solution of Partially Averaged System]\label{properties_solution_pas}
Let $\varepsilon >0$, and let $\beta$,  $\gamma$ satisfy \eqref{condition_H1}. Let $(v_0,w_0)$ be the unique solution of \eqref{defi_v0w0}. Suppose the hypothesis of Theorem \ref{thm_estimates_pas} holds for $\mathcal{B} = W^{k,p}(\R)$ with $k\geq 3$, $p\geq 1$ and for the initial condition $(V(0), W(0)) \in W^{k,p}(\R)\times W^{k,p}(\R)$. Let $(V,W) \in C([0,\infty);W^{k,p}(\R)\times W^{k,p}(\R))$ be the unique solution of \eqref{FHN_pde_pas} given by Theorem \ref{thm_estimates_pas}. Then, there exist positive constants $C_1, C_2, C_3$, independent of $\omega_1$ (dependent on $\|V(0)\|_{W^{k,p}}$ and $\|W(0)\|_{W^{k,p}}$) such that we have the following
\begin{align*}
|V(x,t)|\leq C_1, |\partial_x V(x,t)|\leq C_2, |\partial_t V(x,t)|\leq C_3, \forall x\in \R, t>0.
\end{align*}
\end{proposition}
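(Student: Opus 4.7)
The proof plan is to successively establish uniform bounds on $V$, then on $(V_x,W_x)$, and then on $(V_{xx},W_{xx})$, by applying the contracting-rectangle result Theorem \ref{thm_well_posedness}(ii) to the linear parabolic systems obtained by differentiating \eqref{FHN_pde_pas} in $x$. The bound on $V$ is immediate from Theorem \ref{thm_estimates_pas}: $(V(x,t),W(x,t))\in R_{L,S}$ for every $x\in\R$ and $t\geq 0$, so $C_1 := L$ works.

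For $\partial_x V$, differentiating \eqref{FHN_pde_pas} in $x$ yields the linear parabolic system
\begin{align*}
\partial_t V_x - \partial_x^2 V_x &= \alpha(x,t)V_x - W_x + F_1(x,t),\\
\partial_t W_x - \rho\,\partial_x^2 W_x &= \varepsilon V_x - \varepsilon\gamma W_x,
\end{align*}
where $\alpha(x,t)=1-v_0^2-\frac{A^2}{2}-\frac{B^2}{2}-AB\cos(\eta t)-2v_0 V-V^2$ and $F_1(x,t)$ is a bounded source built from the $x$-derivatives of $A$ and $B$ and from $V$. Since $(V,W)\in R_{L,S}$ and $A,B,A_x,B_x$ are bounded on $\R$, both $\alpha$ and $F_1$ are uniformly bounded in $L^\infty(\R\times[0,\infty))$. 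The key point is that the admissibility condition (Lemma \ref{properties_admissible_parameters}) gives $v_0^2-1>1/\gamma$, so by making $L$ and $\Delta$ small we can ensure $-\alpha(x,t)\geq 1/\gamma+\delta_0$ uniformly, for some $\delta_0>0$. This is exactly the structural inequality exploited in Lemma \ref{lemma_invariant_rectangles}, and the same face-by-face analysis shows that rectangles $R_{L_1,S_1}$ with $1/\gamma<S_1/L_1<-\sup\alpha$ and $L_1\bigl(-\sup\alpha - S_1/L_1\bigr)>\|F_1\|_\infty$ are contracting for the $(V_x,W_x)$ vector field. By enlarging $L_1$ (while keeping $S_1/L_1$ in the admissible range) one can further arrange that $(1+\hat{\epsilon})(V_x(0),W_x(0))\in R_{L_1,S_1}$, using that $V_0,W_0\in W^{k,p}(\R)$ with $k\geq 3$ embeds into $BC^1(\R)$. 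Theorem \ref{thm_well_posedness}(ii), applied in the admissible space $\mathcal{B}=BC^0(\R)\cap L^p(\R)$ of Proposition \ref{prop_hip_local_existence_PAS}, then produces a unique global solution in $R_{L_1,S_1}$, which by uniqueness agrees with $\partial_x(V,W)$. This gives $C_2=L_1$.

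For $\partial_t V$, the cleanest path is to first bound $\partial_{xx} V$ and then use the original equation $V_t = V_{xx} + F(V,W,x,t)$ together with the boundedness of $F$ on $R_{L,S}\times\R\times[0,\infty)$. Differentiating \eqref{FHN_pde_pas} in $x$ once more produces a system for $(V_{xx},W_{xx})$ of the same structure as the $(V_x,W_x)$ system: the coefficient multiplying $V_{xx}$ is again $\alpha(x,t)$, the new source terms (of the form $\alpha_x V_x + c_{1,xx} V + \ldots$) are bounded thanks to the previous step and to the smoothness of $A,B$, and the initial data $V_{xx}(0),W_{xx}(0)$ lies in $BC^0\cap L^p$ precisely because $k\geq 3$. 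Repeating the contracting-rectangle argument produces $|V_{xx}(x,t)|\leq L_2$ for all $x,t$, and hence $|V_t|\leq L_2 + \|F\|_\infty =: C_3$.

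The main obstacle is not the existence of the contracting rectangles at each level---that is guaranteed by the admissibility condition as soon as $L,S,\Delta$ are small---but rather the bookkeeping required to verify that the non-autonomous auxiliary systems for $(V_x,W_x)$ and $(V_{xx},W_{xx})$ belong to the class $V(\mathcal{B})$ of Definition \ref{defi_space_vB}. This is handled by splitting each coefficient (e.g.\ $\alpha$) into a constant part such as $1-v_0^2$ and an $x,t$-dependent part built from the decaying functions $A,B$ and the already-controlled solution $V$: the constant part fits the $\R^2$-valued slot of Definition \ref{defi_space_vB}, while the $x$-dependent part fits the $\mathcal{B}$-valued slot, using the decay at $\pm\infty$ of $V,W$ (inherited from the decay of the source in \eqref{FHN_pde_pas} and the regularizing effect of the heat semigroup) to ensure uniform-in-$t$ membership in $\mathcal{B}=BC^0(\R)\cap L^p(\R)$. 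The virtue of this cascade is that each derivative bound is obtained as an $L^\infty$ bound genuinely uniform in $t$, rather than the exponentially growing bound that a direct Duhamel/Gr\"onwall argument on the linearized system would yield.
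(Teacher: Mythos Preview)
Your approach is quite different from the paper's. The paper argues much more directly: for $t\in[0,1]$ the membership $(V,W)\in C([0,1];W^{k,p}\times W^{k,p})$ combined with the Sobolev embedding $W^{k,p}\hookrightarrow C^{2}$ (valid since $k\geq 3$) immediately bounds $|\partial_x V|$ and $|\partial_x^2 V|$; for $t\geq 1$ the paper places the spatial derivatives on the heat kernel in the Duhamel representation \eqref{equation_fixed_point} started at time $t-1$, using $\|\partial_x g_1(s)\|_{L^1}\leq C/\sqrt{s}$ together with the uniform $L^\infty$ bounds on $V,W$ and on the right-hand side (and a short bootstrap for the second derivative). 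The bound on $\partial_t V$ is then read off from the equation itself. No auxiliary systems, no further invocation of contracting rectangles.

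Your cascade of contracting rectangles on the differentiated systems is conceptually sound and has the appeal of reusing the same dissipativity inequality $v_0^2-1>1/\gamma$ that drives Lemma \ref{lemma_invariant_rectangles}. The weak point is exactly the one you flag at the end: to apply Theorem \ref{thm_well_posedness} literally to the $(V_x,W_x)$ system you need the vector field to lie in $V(\mathcal{B})$ for $\mathcal{B}=BC^0\cap L^p$, and the linear coefficient $\alpha$ contains $-2v_0 V-V^2$. Definition \ref{defi_space_vB} then demands $\sup_t\|V(\cdot,t)\|_{L^p}<\infty$, which is \emph{not} delivered by Theorem \ref{thm_estimates_pas} (only the $L^\infty_x$ rectangle bound is). Your appeal to ``decay at $\pm\infty$ of $V,W$ inherited from the source and the heat semigroup'' is not a proof of that. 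This is fixable: one can either establish the uniform $L^p$ bound on $(V,W)$ first via a separate Duhamel contraction in $L^p$ (using $1-v_0^2<0$ and an inequality of the flavour of \eqref{condition_linear_estimate}), or simply note that the proofs of Theorems \ref{lemma_local_existence}--\ref{lemma_global_existence} use only (H1)--(H4), and verify those directly for your linear system from $\|\alpha\|_{L^\infty_{x,t}}<\infty$ and $F_1\in L^\infty_t(\mathcal{B})$ alone. Either repair works, but each is additional work that the paper's parabolic-smoothing route avoids entirely.
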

\begin{proof}
For part $i)$, first we notice that Theorem \ref{thm_estimates_pas} tells us that $|V(x,t)|\leq C$ and $|W(x,t)|\leq C$ for all $t\in[0,\infty)$ and $x\in \R$. Second, since we have a solution in $C([0,1]; W^{k,p}(\R)\times W^{k,p}(\R))$ we know that it $W^{k,p}(\R) \times W^{k,p}(\R)$ norm remains bounded (this norm might be growing in $t$, but stays finite for $t\in [0,1]$). Next, thanks to the Sobolev embedding we know that $\|f\|_{C^{2,\gamma}} \leq C \|f\|_{W^{k,p}}$ and therefore $|\partial_x V(x,t)|$ and $|\partial_x^2 V(x,t)|$ are bounded for all $t\in [0,1]$ and $x\in \R$. Next, for any time $t \geq 1$ because the coefficient of the Laplacian in the first equation of \eqref{FHN_pde_pas} is nonzero, we can estimate spatial derivatives of $V$ by taking derivatives of the heat kernel in \eqref{equation_fixed_point}, which tells us that $\|\partial_x V(t)\|_{L_x^\infty}$ and $\|\partial_x^2 V(t)\|_{L_x^\infty}$ remain bounded for all $t\in [1,\infty)$. Lastly, to obtain the estimate in the time derivative, because we have enough regularity, the solution given by Theorem \ref{thm_estimates_pas} is a classical solution and therefore we can use the first equation in \eqref{FHN_pde_pas} to estimate $|\partial_t V (x,t)|$ in terms of $| V(x,t)|$, $| W(x,t)|$ and $|\partial_x^2 V(x,t)|$ and since each one of those quantities is uniformly bounded in time and space, we conclude that $|\partial_t V (x,t)|$ is also uniformly bounded. 
\end{proof}

\section{Proof of the approximation result}\label{section_approximation}
The goal of this section is to prove Theorem \ref{thm_approximation_pas}. This is done by studying the problem of the approximation error of using the Partially Averaged System \eqref{FHN_pde_pas} instead of \eqref{FHN_pde}, we proceed in two steps: 
\begin{itemize}
    \item[i.] We consider the linear part of the approximation error to obtain appropriate estimates that depend on the solution of the Partially Averaged System and the frequency $\omega_1$ used in the input current \eqref{input_current}. These estimates will impose some conditions in the parameters of the system, which are part of condition \eqref{condition_H1}.
    \item[ii.] Using the previous result, we study the full nonlinear approximation error equation and conclude that we have uniform estimates for the approximation error for all time under suitable conditions.
\end{itemize}

Since we will continue using the tool of contracting rectangles, which is well adapted to use uniform estimates, it is convenient to consider the following norm.
\begin{defi}
Let $f\in C([0,T];BC^0)$ we consider the norm
\begin{equation}\label{defi_normY}
\|f(x,t)\|_Y = \sup_{0<t<T}\sup_{x\in \R} |f(x,t)|.
\end{equation}

\end{defi}
Additionally, in this section the space $\mathcal{B}$ denote the space $\mathcal{B}=W^{k,p}(\R)$ with $k\geq 3$, $p\geq 1$. 

\begin{proposition}[Equation for the approximation error]\label{prop_eqn_approximation_error}
Let $(v,w)$ the solution of the centered FHN system \eqref{FHN_pde} and $(V,W)$ the solution of the Partially Averaged System \eqref{FHN_pde_pas} with the same initial data. Then, the approximation error given by $E_{v} = v - V - J_0(x,t)$, $E_{w} =v - W$ satisfy 
\begin{align}\label{full_error_system}
\left\{
\begin{array}{rl}
\partial_t E_v - \partial_x^2 E_v &= (1-(v_0+V)^2 +\varphi_1) E_v + \varphi_2E_v^2 - \frac{1}{3} E_v^3\\
&\hspace{1cm} -E_w+ \varphi_3,\\
\partial_t E_w -\rho\partial_{x}^2E_w&= \varepsilon \left(E_v - \gamma E_w\right)+ \varepsilon J_0,\\
E_v(0) =0, & E_w(0) = 0,
\end{array}
\right.
\end{align}
where 
\begin{align}
J_0(x,t) &= A(x) \sin(\omega_1 t) + B(x) \sin(\omega_2 t), \label{def_J0}\\
\varphi_{1} &= - J_0^2-2(v_0+V) J_0 , \label{def_varphi1}\\
\varphi_2 &= -(v_0+V+J_0), \label{def_varphi2}\\
\varphi_3 &= \partial_x^2 J_0 - J_0^3/3-(v_0+V)^2J_0 \notag\\
&\hspace{1cm}+(v_0+V)\biggl(\frac{A(x)^2}{2} \cos(2\omega_1 t) +\frac{B(x)^2}{2} \cos(2\omega_2 t) + A(x)B(x)\cos(\omega_1+\omega_2)t \biggl)\label{def_varphi3}.
\end{align}
\end{proposition}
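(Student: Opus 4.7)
The statement is an algebraic identity, so the plan is to verify it by a careful substitution with no analytic subtleties involved. I would write $v = V + J_0 + E_v$ and $w = W + E_w$, plug into \eqref{FHN_pde}, subtract the equations of the Partially Averaged System \eqref{FHN_pde_pas} along with the contribution coming from differentiating $J_0$, and then expand and regroup. The crucial design property of $J_0$ to exploit is $\partial_t J_0 = A(x)\omega_1\cos(\omega_1 t) + B(x)\omega_2\cos(\omega_2 t) = I(x,t)$, so the external forcing $I$ cancels exactly on subtraction, leaving only $-\partial_x^2 J_0$ as a residual linear contribution from $J_0$.

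The $E_w$ equation comes out essentially for free, since the second component of \eqref{FHN_pde} is linear: subtracting the $W$-equation from the $w$-equation and using $v - V = J_0 + E_v$ and $w - W = E_w$ immediately produces $\partial_t E_w - \rho\partial_x^2 E_w = \varepsilon(E_v - \gamma E_w) + \varepsilon J_0$, which is the second equation in \eqref{full_error_system}. The initial conditions $E_v(0)=E_w(0)=0$ follow from $J_0(x,0)=0$ and the fact that $(v,w)$ and $(V,W)$ share initial data.

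The real work is in the $E_v$ equation, where I would expand $v^2 - V^2$ and $v^3 - V^3$ using $v-V = J_0 + E_v$ and collect terms by powers of $E_v$. The coefficient of $E_v$ works out to $1 - (v_0+V+J_0)^2 = 1 - (v_0+V)^2 - 2(v_0+V)J_0 - J_0^2$, matching $1 - (v_0+V)^2 + \varphi_1$; the coefficient of $E_v^2$ is $-(v_0+V+J_0) = \varphi_2$; and the coefficient of $E_v^3$ is $-\tfrac{1}{3}$. The only step requiring genuine care is the assembly of the terms independent of $E_v$, where the slow envelope $P := \tfrac{A^2}{2}+\tfrac{B^2}{2}+AB\cos(\eta t)$ introduced in \eqref{FHN_pde_pas} plays its role. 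Applying product-to-sum identities to $J_0 = A\sin(\omega_1 t) + B\sin(\omega_2 t)$ yields
\begin{equation*}
J_0^2 = P - \tfrac{A^2}{2}\cos(2\omega_1 t) - \tfrac{B^2}{2}\cos(2\omega_2 t) - AB\cos((\omega_1+\omega_2)t),
\end{equation*}
so that the combination $P(V+v_0) - (v_0+V)J_0^2$ collapses to the purely fast-oscillating term $(v_0+V)\bigl(\tfrac{A^2}{2}\cos(2\omega_1 t) + \tfrac{B^2}{2}\cos(2\omega_2 t) + AB\cos((\omega_1+\omega_2)t)\bigr)$ that appears in $\varphi_3$, and grouping this together with $\partial_x^2 J_0$, $-J_0^3/3$ and $-(v_0+V)^2 J_0$ assembles $\varphi_3$.

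The main obstacle here is not analytic but combinatorial: it is a matter of keeping track of the many cross terms between $V$, $J_0$ and $E_v$ in the cubic expansion without losing or duplicating anything, and of recognizing that the specific form of $P$ in \eqref{FHN_pde_pas} is chosen precisely so that the nonlinear interactions between $V$ and $J_0$ split cleanly into the slow envelope (already absorbed into the equation defining $V$) plus the purely fast-oscillating remainders collected in $\varphi_3$. This last observation is what will make the subsequent approximation argument of Section~\ref{section_approximation} effective, since each term in $\varphi_3$ can eventually be controlled by $O(1/\omega_1)$ after integration against the heat kernel.
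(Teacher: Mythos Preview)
Your approach is exactly the paper's: its entire proof is the single line ``It is immediate from taking the difference between \eqref{FHN_pde} and \eqref{FHN_pde_pas},'' and you have simply spelled out that difference, correctly exploiting $\partial_t J_0 = I$ and the product-to-sum identity for $J_0^2$.

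One bookkeeping point worth flagging: when you collect the $E_v$-independent terms, the linear contribution $(1-v_0^2)(v-V)$ produces $(1-v_0^2)J_0$ at order zero in $E_v$, and after combining with the $J_0$-pieces coming from the quadratic and cubic expansions the total coefficient of $J_0$ is $1-(v_0+V)^2$, not $-(v_0+V)^2$. Hence the computation actually yields $\varphi_3 + J_0$ rather than $\varphi_3$ as written; this is a typo already present in the statement which your outline reproduces rather than catches. It is harmless downstream, since $J_0$ has exactly the same highly oscillatory structure as the other summands of $\varphi_3$ and Lemma~\ref{lemma_oscillatory_integral} handles it in the same way.
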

\begin{proof}
It is immediate from taking the difference between \eqref{FHN_pde} and \eqref{FHN_pde_pas}.
\end{proof}

\subsection{Linear estimate of the error}\label{subsection_linear_estimate_error}

The first ingredient to prove the approximation result is to look at the following linear problem
\begin{equation}\label{equation_linear_estimate_error}
\left\{
\begin{array}{rl}
\partial_t F_v - \partial_{x}^2F_v &= (1-(v_0+V)^2 + \varphi_1) F_v - F_w + \varphi_3,\\
\partial_t F_w-\rho\partial_{x}^2F_w&= \varepsilon F_v - \varepsilon \gamma F_w + \varepsilon J_0,\\
F_v(x,0)=0, & F_w(x,0) = 0,
\end{array}\right.
\end{equation}
where $v_0$ is given by \eqref{defi_v0w0}, and $\varphi_1$, $\varphi_3$ and $J_0$ are given by \eqref{def_varphi1}, \eqref{def_varphi3} and \eqref{def_J0}, respectively. 
For the regularity of the solutions of the equation of the approximation error, we need to apply Theorem \ref{thm_well_posedness}. 

\begin{lemma}[Linear estimate of the error]\label{lemma_linear_estimate_error}
Let $\beta$, $\gamma$ satisfy \eqref{condition_H1}, let $\varepsilon>0$, and let $(v_0,w_0)$ be given by \eqref{defi_v0w0}. Suppose that the parameters $a$, $b$, $d_1$, $d_2$, $\gamma$ and the solution $(V,W)$ of Partially Averaged System \eqref{FHN_pde_pas} satisfy, for some $T>0$, the estimate
\begin{equation}\label{condition_linear_estimate}
\alpha(T):= \frac{\|v_0^2 - (v_0+V)^2\|_Y}{v_0^2-1}  +\frac{M^2}{v_0^2-1} +  \frac{2M\|v_0+V \|_Y }{v_0^2-1}   +
\frac{1}{\gamma (v_0^2-1)} <1, t\in [0,T]
\end{equation}
where $M = |a|/d_1^2+|b|/d_2^2 \geq \|J_0\|_Y$. Then, there exists $(F_v, F_w)$ $\in C([0,T],$ $\mathcal{B}\times \mathcal{B})$ solution of  the initial value problem \eqref{equation_linear_estimate_error} with initial data $(F_v(0),\!F_w(0))$ $=(0,0)$, and a constant $C>0$ independent of $\omega_1$ such that 
\begin{equation}\label{estimate_linear_error}
|F_v(x,t)| \leq \frac{C}{\omega_1}, \quad |F_w(x,t)|\leq \frac{C}{\omega_1},
\end{equation}
for all $x\in \R$, $0\leq t\leq T$ and $\omega_1 \geq 1$.
\end{lemma}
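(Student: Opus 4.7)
The plan is to absorb the fast-oscillating source terms $\varphi_3$ and $\varepsilon J_0$ in \eqref{equation_linear_estimate_error} into explicit $O(1/\omega_1)$ corrections via a change of unknowns, and then trap the residual pair inside a rectangle of diameter $O(1/\omega_1)$ using the contracting-rectangle machinery of Section \ref{section_well_posedness}; well-posedness on $[0,T]$ will come out of the same argument.

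\textbf{Step 1: absorbing the oscillations.} Every term appearing in $\varphi_3$ and in $J_0$ has the structure (slowly varying in $(x,t)$) $\times$ (pure temporal oscillation of frequency at least $\omega_1$), where ``slowly varying'' means built out of $A,B$ (smooth and in every $W^{k,p}(\R)$) together with $V,\partial_xV,\partial_x^2V,\partial_tV$, all uniformly bounded by Proposition \ref{properties_solution_pas} (using $k\geq 3$). Setting
\[
\psi_3(x,t):=\int_0^t\varphi_3(x,s)\,ds,\qquad \psi_0(x,t):=\varepsilon\int_0^t J_0(x,s)\,ds,
\]
a single integration by parts in $s$ yields $\|\psi_3\|_Y+\|\partial_x^2\psi_3\|_Y+\|\psi_0\|_Y+\|\partial_x^2\psi_0\|_Y\leq C(T)/\omega_1$. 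The substitution $\tilde F_v=F_v-\psi_3$, $\tilde F_w=F_w-\psi_0$ exactly cancels $\varphi_3$ and $\varepsilon J_0$ (using $\partial_t\psi_3=\varphi_3$, $\partial_t\psi_0=\varepsilon J_0$) and produces a linear system of the same shape as \eqref{equation_linear_estimate_error} but with residual sources
\begin{equation*}
\tilde\varphi_3=\partial_x^2\psi_3+(1-(v_0+V)^2+\varphi_1)\psi_3-\psi_0,\qquad \tilde J_0=\rho\partial_x^2\psi_0-\varepsilon\gamma\psi_0+\varepsilon\psi_3,
\end{equation*}
which satisfy $\|\tilde\varphi_3\|_Y+\|\tilde J_0\|_Y\leq C(T)/\omega_1$, and with zero initial data $(\tilde F_v,\tilde F_w)(\cdot,0)=(0,0)$.

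\textbf{Step 2: contracting rectangle of size $O(1/\omega_1)$.} Using $\varphi_1=-J_0^2-2(v_0+V)J_0$, the coefficient $1-(v_0+V)^2+\varphi_1$ simplifies to $1-(v_0+V+J_0)^2$, which is bounded above by $-(v_0^2-1)+\|v_0^2-(v_0+V)^2\|_Y+M^2+2M\|v_0+V\|_Y$; hypothesis \eqref{condition_linear_estimate} guarantees $(1-\alpha(T))(v_0^2-1)>0$. Pick $\delta\in(0,(1-\alpha(T))(v_0^2-1))$ and set $S=L(1/\gamma+\delta)$. A direct check on the four faces of $R_{L,S}$ then gives, for $\tilde H$ the vector field driving $(\tilde F_v,\tilde F_w)$,
\begin{align*}
\sup_{x\in\R}(\pm 1,0)\cdot \tilde H\bigl((\pm L,\tilde F_w),x,t\bigr)&\leq -\bigl((1-\alpha(T))(v_0^2-1)-\delta\bigr)L+\frac{C(T)}{\omega_1},\\
\sup_{x\in\R}(0,\pm 1)\cdot \tilde H\bigl((\tilde F_v,\pm S),x,t\bigr)&\leq -\varepsilon\gamma\delta L+\frac{C(T)}{\omega_1},
\end{align*}
uniformly for $t\in[0,T]$. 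Choosing $L=L_0/\omega_1$ with $L_0$ large enough (depending on $T$, $\alpha(T)$, $\delta$, $\varepsilon$, $\gamma$ and on the constant $C(T)$ from Step 1) makes both bounds strictly negative, so $R_{L,S}$ is contracting under $\tilde H$ on $[0,T]$.

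\textbf{Step 3: conclusion and main obstacle.} The transformed right-hand side lies in $V(\mathcal{B})$ for $\mathcal{B}=W^{k,p}(\R)$, whose functions vanish at $\pm\infty$ (since $kp>1$ in our range); Theorem \ref{thm_well_posedness}(i) provides local existence, and the Dini-derivative argument in the proof of Theorem \ref{lemma_global_existence}, run on the finite interval $[0,T]$ rather than on $[0,\infty)$, yields a unique solution $(\tilde F_v,\tilde F_w)\in C([0,T];\mathcal{B}\times\mathcal{B})$ trapped inside $R_{L,S}$. Consequently $|\tilde F_v|,|\tilde F_w|\leq L_0/\omega_1$, and adding back $\psi_3,\psi_0$ gives the advertised bound \eqref{estimate_linear_error}. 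The principal technical obstacle is precisely the uniform $O(1/\omega_1)$ bound on $\psi_3,\psi_0$ together with their second spatial derivatives: the one-step integration-by-parts remainder is of the form $\frac{1}{\omega_k}\int_0^t\partial_s(\cdots)\,ds$, which grows linearly in $T$ (acceptable since $T$ is fixed in the hypothesis) but requires uniform control of the mixed derivatives $\partial_t\partial_x^jV$ for $j\leq 2$, and this is exactly where the higher-regularity hypothesis $k\geq 3$ in $\mathcal{B}=W^{k,p}(\R)$ enters the analysis, via differentiation of \eqref{FHN_pde_pas} in $x$ and the heat-kernel smoothing exploited in the proof of Proposition \ref{properties_solution_pas}.
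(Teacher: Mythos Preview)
Your approach is genuinely different from the paper's. The paper sets up a Picard iteration for the Duhamel representation of \eqref{equation_linear_estimate_error}: writing $F_v^{(k+1)}$ as the heat propagator (with damping $e^{-(v_0^2-1)(t-\tau)}$) acting on $(v_0^2-(v_0+V)^2+\varphi_1)F_v^{(k)}-F_w^{(k)}+\varphi_3$, and similarly for $F_w$, one checks the iteration contracts in the $\|\cdot\|_Y$ norm with rate exactly $\alpha(T)$. The $1/\omega_1$ factor then comes from the first iterate $F_v^{(1)}=\int_0^t e^{(1-v_0^2)(t-\tau)}g_1(t-\tau)*\varphi_3\,d\tau$ via an oscillatory-integral lemma (Lemma \ref{lemma_oscillatory_integral}) that integrates by parts in $\tau$ after the change of variables $y=x-2(t-\tau)^{1/2}z$; the exponential weight $e^{-(v_0^2-1)\tau}$ makes the remainder integrable \emph{uniformly in $t$}, so the paper's constant is independent of $T$, and only $\|V\|_Y,\|\partial_xV\|_Y,\|\partial_tV\|_Y$ enter---precisely the content of Proposition \ref{properties_solution_pas}.

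Your route of absorbing the oscillations through time-antiderivatives and then trapping the residual in a contracting rectangle is conceptually clean and stays entirely inside the invariant-region machinery. Two concerns, though. First, your constant $C(T)$ grows linearly in $T$, since the IBP remainder $\frac{1}{\omega_1}\int_0^t\partial_s(\cdot)\,ds$ carries no damping; the paper's heat-kernel formulation supplies the factor $e^{-(v_0^2-1)\tau}$ that makes the analogous integral uniformly bounded. For the lemma as literally stated this is acceptable, but Theorem \ref{thm_approximation_pas} needs a single $\omega_0$ valid for all $T$ (the paper says $\omega_0$ depends only on $\mu$), and a $T$-dependent bound here would not close that argument. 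Second, your Step 1 asks for $\|\partial_x^2\psi_3\|_Y\le C(T)/\omega_1$, whose IBP remainder contains $\partial_t\partial_x^2$ of the slow coefficients in $\varphi_3$, hence $\partial_t\partial_x^2V$; through the PDE this equals $\partial_x^4V+\cdots$. The hypothesis $k\ge 3$ gives only $V\in C^{2,\alpha}$ by Sobolev, so $\partial_x^4V$ is not pointwise bounded for small $t$, and heat-kernel smoothing does not help there. The paper's oscillatory lemma needs only one spatial derivative of the slow coefficient, which is exactly why $k\ge 3$ suffices in that proof. Your scheme would go through with, say, $k\ge 5$, but still at the cost of the uniform-in-$T$ estimate.
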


\begin{proof}
For the regularity we apply  Theorem \ref{lemma_local_existence} to guarantee that if we start at $(0,0)\in \mathcal{B}\times \mathcal{B}$ and we prove that  $\sup_{x\in\R}|F_v(x,t)| +\sup_{x\in\R}|F_w(x,t)|\leq C$ for $t\in [0,T]$, then the solution belongs to $C([0,T]; \mathcal{B}\times \mathcal{B})$. 

The idea of the proof is to consider an iterative approximation of \eqref{equation_linear_estimate_error} and obtain some decay on $\omega_1$ by looking at the highly oscillatory terms. To simplify the notation, let $g_\sigma(x,t)$ as in \eqref{green_heat} and notice that $g_\sigma dx$ is a measure of total mass 1 for all $\sigma \geq 0$. Multiplying the first equation in \eqref{equation_linear_estimate_error} by $e^{-(1-v_0^2)t}$ we get
\begin{equation*}
\partial_t e^{-(1-v_0^2)t}F_v - \partial_x^2 e^{-(1-v_0^2)t}F_v 
= ((v_0^2-(v_0+V)^2) + \varphi_{1}) e^{-(1-v_0^2)t}F_v - e^{-(1-v_0^2)t}F_w + e^{-(1-v_0^2)t}\varphi_3.
\end{equation*}
By virtue of Duhamel's principle, we get 
\begin{equation*}
F_v(x,t) = g_1(t) * F_v(\cdot,0)
+ \int_0^t  e^{(1-v_0^2)(t-\tau)} g_1(t-\tau)* \left(((v_0^2-(v_0+V)^2) + \varphi_{1})F_v - F_w + \varphi_3\right) d\tau.
\end{equation*}
For the second equation, we obtain 
\begin{align*}
F_w(x,t) = g_\rho(t)*F_w(\cdot,0) + \varepsilon\int_0^{t} e^{-\varepsilon \gamma(t-\tau)} g_\rho(t-\tau)* \left(F_v + J_0\right) d\tau.
\end{align*}
Let us consider the following iterative procedure. Set $F_v^{(0)} = 0$, $F_w^{(0)}=0$ and define
\begin{align*}
F_v^{(k+1)}(x,t) &= g_1(t)*F_v(\cdot,0)+\int_0^t e^{ (1-v_0^2)(t-\tau)} g_1(t-\tau) *  \left(((v_0^2-(v_0+V)^2) + \varphi_{1})F_v^{(k)} - F_w^{(k)} + \varphi_3\right) d\tau\\
F_w^{(k+1)}(x,t) &= g_\rho(t) * F_w(\cdot,0) + \varepsilon\int_0^{t} e^{-\varepsilon \gamma ( t-\tau)} g_\rho(t-\tau)*\left(F_v^{(k+1)} + J_0\right) d\tau,
\end{align*}
where $\varphi_1$, $\varphi_3$ and $J_0$ are defined by \eqref{def_varphi1}, \eqref{def_varphi3} and \eqref{def_J0}, respectively, and where $(F_v^{(k)}, F_w^{(k)}) \in  BC^0\times  BC^0$ imply that $(F_v^{(k+1)}, F_w^{(k+1)}) \in  BC^0\times  BC^0$. The next step is to look at the convergence of the sequences $\{F_v^{(k)}\}$, $\{F_w^{(k)}\}$ in the space $C([0,T]; BC^0)$. We consider the norm in $C([0,T]; BC^0)$ given by \eqref{defi_normY} and estimate the difference between two consecutive terms. For $\{F_v^{(k)}\}$, it holds
\begin{align*}
&I= \|F_v^{(k+1)}-F_v^{(k)}\|_Y\\
&\leq \sup_{0<t<T}\int_0^t e^{ (1-v_0^2)(t-\tau)} \left\|g_1(t-\tau)\!*\! \left(\left|v_0^2-(v_0+V(\cdot,\tau))^2\right| + |\varphi_{1}(\cdot,\tau)|\right)\right\|_{L^\infty_x} d\tau \|F_v^{(k)}-F_v^{(k-1)}\|_Y\\
&~~+ \sup_{0<t<T}\int_0^t e^{(1-v_0^2)(t-\tau)} d\tau   \left\|F_w^{(k)}-F_w^{(k-1)} \right\|_Y\\ 
&\leq \sup_{0<t<T}\int_0^t e^{ (1-v_0^2)(t-\tau)}\left\|g_1(t-\tau)\!*\! \left(\left|v_0^2-(v_0+V(\cdot,\tau))^2\right| + |\varphi_{1}(\cdot,\tau)|\right)\right\|_{L^\infty_x} d\tau  \|F_v^{(k)}-F_v^{(k-1)}\|_Y\\
&~~+ \sup_{0<t<T}\int_0^t e^{ (1-v_0^2)(t-\tau)}d\tau  \frac{1}{\gamma}\|F_v^{(k)} - F_v^{(k-1)}\|_Y.
\end{align*}
Since $\varphi_1= J^2_0-2(v_0+V)J_0$, 
\begin{align*}
  J&= \left\|g_1(t-\tau)* \left(\left|v_0^2-(v_0+V(\cdot,\tau))^2\right| + |\varphi_{1}(\cdot,\tau)|\right)\right\|_{L^\infty_x}\\
  &\leq \left\| \left|v_0^2-(v_0+V(\cdot,\tau))^2\right| + |\varphi_{1}(\cdot,\tau)|\right\|_{L^\infty_x}\\
    &\leq  \left\| v_0^2-(v_0+V(\cdot,\tau))^2 \right\|_{L^\infty_x} +\left\| J^2_0 \right\|_{L^\infty_x}+2 \left\| (v_0+V)J_0\right\|_{L^\infty_x}\\
    &\leq  \left\| v_0^2-(v_0+V(\cdot,\tau))^2 \right\|_Y + M^2+2M\|v_0+V\|_Y
\end{align*}
utilizing this bound in the estimate for $\|F_v^{(k+1)}-F_v^{(k)}\|_Y$ gives
\begin{align*}
I&= \|F_v^{(k+1)}-F_v^{(k)}\|_Y \\
&\leq  \|F_v^{(k)} - F_v^{(k-1)}\|_Y\left( \left\| v_0^2-(v_0+V(\cdot,\tau))^2 \right\|_Y + M^2+2M\|v_0+V\|_Y +\frac{1}{\gamma} \right)\\
&\hspace{1cm}\times \sup_{0\leq t\leq T}\int_0^t e^{(1-v_0^2)(t-\tau)}d\tau\\
&\leq  \|F_v^{(k)} - F_v^{(k-1)}\|_Y \frac{\left\| v_0^2-(v_0+V(\cdot,\tau))^2 \right\|_Y + M^2+2M\|v_0+V\|_Y +\frac{1}{\gamma} }{v_0^2-1}
\end{align*}
where we used that $v_0^2-1>0$. Defining $\alpha(T) = (\| v_0^2-(v_0+V(\cdot,\tau))^2\|_Y + M^2+2M\|v_0+V\|_Y +\frac{1}{\gamma} )/(v_0^2-1)$ we obtained that 
\begin{equation*}
\Vert F_v^{(k+1)}-F_v^{(k)}\Vert_Y \leq \alpha(T)\Vert F_v^{(k)}-F_v^{(k-1)}\Vert_Y .
\end{equation*}
Analogously, for $\{F_w^{(k)}\}$, we get
\begin{align*}
      \Vert F_w^{(k+1)}-F_w^{(k)}\Vert_Y &\leq \varepsilon \sup_{0<t<T}\int_0^te^{-\varepsilon \gamma(t-\tau)}d\tau\Vert F_v^{(k+1)}-F_v^{(k)}\Vert_{Y}\\
      &\leq 
      \frac{1}{\gamma} \Vert F_v^{(k+1)}-F_v^{(k)}\Vert_{Y} .
\end{align*}
Now, to state the convergence of the sequences $\{F_v^{(k)}\}$, $\{F_w^{(k)}\}$ in $C([0,T];$ $BC^0)$, use that for $m> n$
\begin{equation*}
\|F_v^{(m)} -F_v^{(n)} \|_Y \leq  \sum_{k=n}^{m-1}\left\|F_v^{(k+1)} - F_v^{(k)}\right\|_{Y}\leq \sum_{k=n}^{m-1} \alpha(T)^k \|F_v^{(1)}-F_v^{(0)}\|_{Y}, 
\end{equation*}
\begin{equation*}
\|F_w^{(m)} - F_w^{(n)}\|_Y\leq \sum_{k=n}^{m-1} \left\|F_w^{(k+1)} - F_w^{(k)}\right\|_{Y}
\leq \sum_{k=n}^{m-1} \frac{1}{\gamma}\left\|F_v^{(k+1)} - F_v^{(k)}\right\|_{Y}\leq\sum_{k=n}^{m-1} \frac{1}{\gamma}\alpha(T)^k \|F_v^{(1)}-F_v^{(0)}\|_{Y}. 
\end{equation*}
Condition \eqref{condition_linear_estimate} guarantees that we have a contraction mapping
hence the sequence $\{(F_v^{(k)},F_w^{(k)})\}_k$ converges in $C([0,T];BC^0\times BC^0)$ and uniqueness of solution in Theorem \ref{lemma_local_existence} imply that $(F_v, F_w) \in C([0,\infty),\mathcal{B}\times \mathcal{B})$. 
To conclude the last part of the proof of the Lemma \ref{lemma_linear_estimate_error}, we need to prove the estimate \eqref{estimate_linear_error}, which can be obtained by bounding in a proper manner $\|F_v^{(1)}-F_v^{(0)}\|_Y$. The main difficulty is that we need a factor $\frac{1}{\omega_1}$ from
\begin{equation}\label{first_step_iteration_v}
F_v^{(1)} -F_v^{(0)}= \int_0^t e^{ (1-v_0^2)(t-\tau)} g_1(t-\tau) *  \varphi_3 d\tau, 
\end{equation}
where $\varphi_3$ is given by \eqref{def_varphi3}. The key for this estimate is the following lemma, which is why we require additional regularity for the Theorem \ref{thm_approximation_pas}.
\begin{lemma}[Oscillatory estimate for the linear non-homogeneous heat equation]\label{lemma_oscillatory_integral}
Let $\omega > 1$, let $d>0$ and let $f\in C^1(\R\times[0,T])$, then we have the following integral estimate
\begin{equation*}
\left\|\int_0^t \int_\R  \frac{e^{-\frac{|x-y|^2}{4 (t-\tau)} + (1-v_0^2)(t-\tau)}}{(4\pi (t-\tau))^{1/2}}  \frac{f(y,\tau)}{d^2+y^2} e^{i \omega \tau} dy d\tau \right\|_Y
\leq \frac{C}{\omega}\left(\|\partial_t f\|_Y + \|\partial_x f\|_Y + \|f\|_Y \right),
\end{equation*}
for some constant $C= C(v_0,d)>0$.
\end{lemma}
\begin{proof}
    We first observe that if $h(x,t) = f(x,t)/(d+x^2)$ then there exist a constant $C_d$ only depending on $d$ such that
    \begin{align*}
        \|\partial_t h\|_Y+\|\partial_x h\|_Y+ \|h\|_Y \leq C_d(\|\partial_t f\|_Y+\|\partial_x f\|_Y+ \|f\|_Y).
    \end{align*}
    Next, by considering the change of variables $s \to t-\tau$, $x-y \to 2\tau^{1/2} z$ we can write   
        \begin{align*}
            I(x,t)&= \int_0^t \int_\R  \frac{e^{-\frac{|x-y|^2}{4 (t-s)} + (1-v_0^2)(t-s)}}{(4\pi (t-s))^{1/2}}  h(y,s) e^{i \omega s} dy ds\\
            &=e^{i \omega t}\int_0^t e^{-(i\omega+(v_0^2-1))\tau}\frac{1}{\sqrt{\pi}}\int_\R  e^{-z^2} h(x-2\tau^{1/2}z,\tau) dz d\tau.
        \end{align*}
Letting $\psi(x,t) = \frac{1}{\sqrt{\pi}}\int_\R  e^{-z^2} h(x-2t^{1/2}z,t) dz $ we can verify that
\begin{align*}
    \|\psi(\cdot,t)\|_{L^\infty_x}\leq& ~ \|h(\cdot,t)\|_{L^\infty_x}\\
    \|\partial_t \psi (\cdot,t)\|_{L^\infty_x} =&  \bigg\| \frac{1}{\sqrt\pi}\int_\R  e^{-z^2} \Big(\partial_x h(x-2t^{1/2}z,t)(- t^{-1/2}z) + \partial_t h(x-2t^{1/2}z,t) \Big)   dz\bigg\|_{L^\infty_x}\\
    \leq &~ \frac{1}{\sqrt{t \pi}} \|\partial_x h(\cdot,t)\|_{L^\infty_x} + \|\partial_t h(\cdot,t)\|_{L^\infty_x}.
\end{align*}
    For each $x$ we integrate by parts in $\tau$, and use that $v_0^2-1>0$, to get that for any $(x,t)$ 
    \begin{align*}
        |I(x,t)|=&~\left| \int_0^t e^{-(i\omega+(v_0^2-1))\tau}\psi(x,t) d\tau \right| \\
        =&~\frac{1}{|i\omega+v_0^2-1|}  \left| e^{-(i\omega+(v_0^2-1))\tau}\psi(x,\tau) \Big|_{\tau=0}^t - \int_0^t e^{-(i\omega+(v_0^2-1))\tau} \partial_t \psi(x,\tau) d\tau \right| \\
        \leq &~ \frac{1}{\omega}\left(2\|h\|_Y+\Big(\|\partial_x h\|_Y+\|\partial_t h\|_Y\Big) \int_0^t e^{-(v_0^2-1)\tau}\left(\frac{1}{\sqrt{\tau\pi}}+1\right) d\tau\right)\\
        \leq &~ \frac{C_{v_0}}{\omega}\left(\|h\|_Y+\|\partial_x h\|_Y+\|\partial_t h\|_Y \right),
    \end{align*}  
    for some $C_{v_0}$ that only depends on $v_0^2-1$. We can finally conclude that $\|I(x,t)\|_Y\leq \frac{C}{\omega} \left(\|f\|_Y+\|\partial_x f\|_Y+\|\partial_t f\|_Y \right)$ for some $C=C(v_0,d)$ that only depends on $d$ and $v_0$.
\end{proof}
Now, we are in a position to end the proof of Lemma \ref{lemma_linear_estimate_error}. We can apply Lemma \ref{lemma_oscillatory_integral} to each term in \eqref{first_step_iteration_v}. Using the properties for $V$ in Proposition \ref{properties_solution_pas}. we conclude 
\begin{equation}\label{estimate_initial_Fv}
\|F_v^{(1)} -F_v^{(0)}\|_Y \leq \frac{C_1}{\omega_1}.
\end{equation}
Finally, using the convergence of the sequence $\{F_v^{(k)}\}$, the estimate \eqref{estimate_initial_Fv} and that $\alpha = \alpha(T) < 1$ we conclude
\begin{align*}
\|F_v-F_v^{(0)}\|_Y&\leq \|F_v- F_v^{(N+1)}\|_{Y} + \sum_{k=0}^{N}\left\|F_v^{(k+1)} - F_v^{(k)}\right\|_{Y}\\
&\leq \|F_v- F_v^{(N+1)}\|_{Y} + \sum_{k=0}^N \alpha^k \|F_v^{(1)}-F_v^{(0)}\|_{Y}\\
&\leq \|F_v- F_v^{(N+1)}\|_{Y} +\frac{\alpha}{1-\alpha} \frac{C}{\omega_1}.
\end{align*}
Therefore, by taking the limit as $N\to \infty$ we get the first estimate in \eqref{estimate_linear_error}. Analogously for $F_w$ we get
\begin{align*}
\|F_w-F_w^{(0)}\|_Y&\leq \|F_w- F_w^{(N+1)}\|_{Y} + \sum_{k=0}^{N}\left\|F_w^{(k+1)} - F_w^{(k)}\right\|_{Y}\\
&\leq \|F_w- F_w^{(N+1)}\|_{Y} + \sum_{k=0}^N \frac{1}{\gamma}\left\|F_v^{(k+1)} - F_v^{(k)}\right\|_{Y}\\
&\leq \|F_w- F_w^{(N+1)}\|_{Y} +\frac{1}{\gamma}\frac{ \alpha}{1-\alpha} \frac{C}{\omega_1}.
\end{align*}
Taking the limit as $N\to \infty$ we get the second part of \eqref{estimate_linear_error}. This concludes the proof of Lemma \ref{lemma_linear_estimate_error}.
\end{proof}

The next result establishes that the condition  \eqref{condition_linear_estimate}  in the previous lemma is not too restrictive.

\begin{lemma}\label{lemma_estimate_delta}
Let $\beta, \gamma$ and $\delta$ satisfy \eqref{condition_H1} and let $\varepsilon >0$. Let $A(x)$ and $B(x)$ be given by \eqref{decay_AxBx} and let $(V,W)$ the solution of \eqref{FHN_pde_pas}. Suppose that  
\begin{itemize}
\item[(i)] $\|V\|_Y\leq \min\left\{1,\frac{1}{\gamma}\frac{1}{1+2\max\{\sqrt{3},\beta\}}\right\}$,
\item[(ii)] $M = |a|/d_1^2+|b|/d_2^2 \leq \min\left\{ \frac{1}{\sqrt{\gamma}},\frac{1}{2\gamma(1+\max\{\sqrt{3},\beta\})}\right\}$.
\end{itemize}
Then, condition \eqref{condition_linear_estimate} in Lemma \ref{lemma_linear_estimate_error} is satisfied.
\end{lemma}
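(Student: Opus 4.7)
The plan is to bound each of the four summands in $\alpha(T)$ separately by $\delta$, which will give $\alpha(T) \leq 4\delta < 1$ since Definition~\ref{defi_admissible_parameters} restricts $\delta < 1/4$. The two essential inputs are the bounds from Lemma~\ref{properties_admissible_parameters}, namely $|v_0| \leq \max\{\sqrt{3},\beta\}$ (a consequence of the lower bound on $v_0$) and $\frac{1}{v_0^2 - 1} < \delta\gamma$.

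First I would handle the term $\frac{\|v_0^2 - (v_0+V)^2\|_Y}{v_0^2-1}$ by factoring $v_0^2 - (v_0+V)^2 = -V(2v_0 + V)$, which yields the pointwise bound $|v_0^2 - (v_0+V)^2| \leq \|V\|_Y\bigl(2\max\{\sqrt{3},\beta\} + \|V\|_Y\bigr) \leq \|V\|_Y\bigl(1 + 2\max\{\sqrt{3},\beta\}\bigr)$ using $\|V\|_Y \leq 1$ from hypothesis (i). Combining with $\frac{1}{v_0^2-1} < \delta\gamma$ and the second part of hypothesis (i),
\[
\frac{\|v_0^2 - (v_0+V)^2\|_Y}{v_0^2-1} \leq \|V\|_Y (1 + 2\max\{\sqrt{3},\beta\}) \delta\gamma \leq \delta.
\]

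For the remaining three terms the approach is similar. Hypothesis (ii) gives $M^2 \leq 1/\gamma$, so
\[
\frac{M^2}{v_0^2 - 1} \leq \frac{1}{\gamma(v_0^2-1)} < \delta.
\]
For the cross term, use $\|v_0 + V\|_Y \leq \max\{\sqrt{3},\beta\} + 1$ together with the second bound in (ii) to obtain $2M\|v_0 + V\|_Y \leq 1/\gamma$, so that this term is again bounded by $\delta$. Finally, the fourth term $\frac{1}{\gamma(v_0^2-1)} < \delta$ is immediate. Summing yields $\alpha(T) \leq 4\delta < 1$.

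There is no real obstacle here; everything reduces to bookkeeping once the two estimates from Lemma~\ref{properties_admissible_parameters} are in hand. The only mild subtlety is recognizing that the somewhat asymmetric form of hypothesis (i), namely the factor $\frac{1}{1 + 2\max\{\sqrt{3},\beta\}}$, is precisely what is needed to cancel the expansion $|2v_0 + V|$ in the first term; the apparently redundant entry $1$ in the minima in (i) and the entry $1/\sqrt{\gamma}$ in (ii) are only used to discard the quadratic pieces $V^2$ and $M^2$ cleanly.
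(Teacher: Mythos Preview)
Your proposal is correct and follows essentially the same approach as the paper: bound each of the four summands in $\alpha(T)$ by $\delta$ using the estimates $|v_0|\leq\max\{\sqrt{3},\beta\}$ and $\frac{1}{v_0^2-1}<\delta\gamma$ from Lemma~\ref{properties_admissible_parameters}, then conclude $\alpha(T)\leq 4\delta<1$. Your write-up is slightly more explicit about how the hypotheses (i) and (ii) are consumed, but the argument is the same.
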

\begin{proof}
Using the bounds in $\frac{1}{v_0^2-1}$ and $|v_0|$ provided by Lemma \ref{properties_admissible_parameters} and our hypothesis we get 
\begin{align*}
\frac{\|v_0^2 - (v_0\!+\!V)^2\|_Y}{v_0^2-1} &= \frac{\|V (V\!+\!2v_0)\|_Y}{(v_0^2-1)}\leq \delta \gamma \|V\|_Y( \|V\|_Y\!+\!2\max\{\sqrt{3},\beta\}\}) \leq \delta,\\
\frac{M^2}{v_0^2-1} &\leq M^2 \delta\gamma \leq \delta ,\\
\frac{2M\|v_0+V \|_Y}{v_0^2-1} &\leq 2M (\|V\|_Y + \max\{\sqrt{3},\beta\})\delta\gamma\leq \delta,\\
\frac{1}{\gamma}\frac{1}{v_0^2-1}&\leq  \delta.
\end{align*}
We obtain that for all $0<\delta< 1/4$, condition \eqref{condition_linear_estimate} is satisfied. This concludes the proof of Lemma \ref{lemma_estimate_delta}.

\end{proof}

\subsection{Nonlinear estimate of the error}
In subsection \ref{subsection_linear_estimate_error}, we obtained an estimate for the linear part of the equation \eqref{full_error_system}. This subsection will explain how that estimate can be used to bound the solution of \eqref{full_error_system}. 
\begin{lemma}[Nonlinear estimate for the error equation]\label{nonlinear_estimate_error_pas}
Let $\varepsilon>0$, let $\gamma$, $\beta$ satisfy \eqref{condition_H1} and let $(v_0,w_0)$ be given by \eqref{defi_v0w0}.
For some $T>0$ let $(V,W)\in C([0,T];\mathcal{B}\times \mathcal{B})$ be a solution of \eqref{FHN_pde_pas}, let $\varphi_1, \varphi_2$ be given by \eqref{def_varphi1}, \eqref{def_varphi2}, and let $(F_v,F_w)\in C([0,T];\mathcal{B}\times \mathcal{B})$ be the corresponding solution of \eqref{equation_linear_estimate_error}.

Given $\mu >0$ there exist constants $C_1(\mu)$, $C_2(\mu)$, $C_3(\mu)$ $>0$ such that 
if 
\begin{enumerate}
\item $\|V\|_Y \leq C_1(\mu)$,
\item $\|\varphi_1\|_Y\leq C_2(\mu)$,
\item $\|\varphi_2\|_Y \leq |v_0| + C_1(\mu) + 1$,
\item $|F_v(x,t)| \leq C_3(\mu)$, $|F_w(x,t)|\leq \mu/2$, for $x\in \R$, $0\leq t\leq T$,
\end{enumerate}
then, there exists a unique solution $(E_v,E_w)\in C([0,T];\mathcal{B}\times \mathcal{B} )$ of \eqref{full_error_system} that also satisfies the estimate
\begin{equation}\label{thm_nonlinear_conclusion}
|E_v(x,t)| \leq \mu,\quad |E_w(x,t)| \leq \mu, \quad \forall x\in \R,\quad 0\leq t \leq T.
\end{equation}

\end{lemma}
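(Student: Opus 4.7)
The plan is to decompose the error as $E_v = F_v + G_v$ and $E_w = F_w + G_w$, where $(F_v, F_w)$ is the linear approximation constructed in Lemma \ref{lemma_linear_estimate_error}. Subtracting \eqref{equation_linear_estimate_error} from \eqref{full_error_system} makes the inhomogeneous forcings $\varphi_3$ and $\varepsilon J_0$, as well as the linear terms involving $F_v, F_w$, cancel, leaving a homogeneous-in-$(G_v,G_w)$ system with zero initial data:
\begin{align*}
\partial_t G_v - \partial_x^2 G_v &= \bigl(1-(v_0+V)^2+\varphi_1\bigr) G_v + \varphi_2 (G_v + F_v)^2 - \tfrac{1}{3}(G_v + F_v)^3 - G_w, \\
\partial_t G_w - \rho \partial_x^2 G_w &= \varepsilon\bigl(G_v - \gamma G_w\bigr).
\end{align*}
The coefficients $V, \varphi_1, \varphi_2, F_v$ are prescribed functions of $(x,t)$ in $L^\infty([0,T];\mathcal{B})$, so the right-hand side belongs to $V(\mathcal{B})$ and Theorem \ref{thm_well_posedness} applies.

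Next I would construct a small contracting rectangle for the $G$-system. Pick $L^*, S^* > 0$ with $L^* \leq \mu/2$, $S^* \leq \mu/2$ and aspect ratio $S^*/L^* = \tfrac{1}{2}(1/\gamma + v_0^2-1)$, which lies in the non-empty interval $(1/\gamma, v_0^2-1)$ because the admissibility conclusion $v_0^2-1 > 1/(\delta\gamma)$ of Lemma \ref{properties_admissible_parameters} gives $\gamma(v_0^2-1) > 4$. On the top face $G_w = S^*$ the outward normal component of the vector field is $\varepsilon(G_v - \gamma S^*) \leq \varepsilon(L^* - \gamma S^*) < 0$, and the bottom face is analogous. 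On the right face $G_v = L^*$ the outward normal component is bounded above, uniformly in $(x,t)$ and in $G_w\in[-S^*,S^*]$, by
\[
-(v_0^2-1)L^* + S^* + \bigl(2|v_0| C_1 + C_1^2 + C_2\bigr) L^* + (|v_0|+C_1+1)(L^* + C_3)^2 + \tfrac{1}{3}(L^* + C_3)^3.
\]
Our aspect choice yields $-(v_0^2-1)L^* + S^* = -\tfrac{1}{2}(v_0^2-1-1/\gamma) L^* < 0$, so choosing $C_1(\mu), C_2(\mu), C_3(\mu)$ sufficiently small (in terms of $\mu, v_0, \gamma$ and the already-fixed $L^*$) makes the whole expression strictly negative. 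The left face is handled identically.

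Having established that $R = [-L^*,L^*]\times [-S^*,S^*]$ is contracting under the $G$-field and that $(1+\hat\epsilon)(0,0) \in R$ trivially, Theorem \ref{thm_well_posedness}(ii) supplies a unique solution $(G_v,G_w) \in C([0,T];\mathcal{B}\times \mathcal{B})$ taking values in $R$. Setting $E_v = F_v + G_v$, $E_w = F_w + G_w$ then produces a solution of \eqref{full_error_system} in $C([0,T];\mathcal{B}\times \mathcal{B})$ satisfying
\[
|E_v| \leq C_3 + L^* \leq \mu, \qquad |E_w| \leq \mu/2 + S^* \leq \mu,
\]
with uniqueness provided by Theorem \ref{thm_well_posedness}(i) applied directly to \eqref{full_error_system}. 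The main delicate point is the face-inequality on the right and left: the stabilizing linear term $-(v_0^2-1)L^*$ must absorb both the coupling contribution $+S^*$ (which fixes the constraint $S^*/L^* < v_0^2-1$ and traces back to the admissibility of $\beta,\gamma$) and the cubic corrections in $L^* + C_3$, forcing $C_3(\mu)$ to be chosen of smaller order than $L^*$ rather than merely of order $\mu$.
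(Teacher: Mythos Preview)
Your proposal is correct and follows essentially the same route as the paper: subtract the linear piece $(F_v,F_w)$, write the residual system for $G=E-F$ (the paper calls it $(R_v,R_w)$), verify that a small rectangle with aspect ratio strictly between $1/\gamma$ and $v_0^2-1$ is contracting for the $G$-field, and invoke Theorem~\ref{thm_well_posedness}. The only point to tighten is that your right-face bound still contains the term $(|v_0|+1)(L^*)^2+\tfrac13(L^*)^3$ even when $C_1=C_2=C_3=0$, so $L^*$ must itself be chosen small (not merely $\le\mu/2$) before you shrink the $C_i$; the paper handles this by solving the quadratic $P-QL+RL^2=0$ for $\hat L$ explicitly.
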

\begin{proof}
First, assume $C_3(\mu)\leq \mu/2$, so we have
\begin{equation}\label{estimates_nonlinear_error_2}
|F_v(x,t)| \leq \mu/2, \quad |F_w(x,t)|\leq \mu/2,\quad  \forall x\in \R,\quad  0\leq t\leq T.
\end{equation}
Since we want to use \eqref{equation_linear_estimate_error} to study \eqref{full_error_system} it makes sense to consider the trajectories
\begin{align*}
R_v(x,t) &= E_v(x,t) - F_v(x,t), \\
R_w(x,t) &= E_w(x,t) - F_w(x,t).
\end{align*}
The equation for $(R_v,R_w)$ is given by
\begin{align}\label{eqR_vR_W}
\left\{
\begin{array}{rl}
\partial_t R_v - \partial_x^2 R_v &= (1-(v_0+V)^2 + \varphi_1) R_v -\frac{1}{3}R_v^{3} -R_w -R_v^2 F_v \\
& \hspace{1cm}-R_v F_v^2-\frac{1}{3}F_v^3 + \varphi_2 (R_v + F_v)^2,\\
\partial_t R_w-\rho\partial_x^2 R_w &= \varepsilon(R_v- \gamma R_w),\\
R_v(x,0)=0,  &R_w(x,0) = 0,
\end{array}
\right.
\end{align}
where $\varphi_1$ and $\varphi_2$ are given by \eqref{def_varphi1} and \eqref{def_varphi2}, respectively. We must establish that the nonlinear error system \eqref{eqR_vR_W} admits small contracting rectangles. Consider the vector field
\begin{align}
X((R_v, R_w),x,t) &= \binom{X_1((R_v, R_w),x,t) }{X_2((R_v, R_w),x,t) }\label{vector_field_error_eqn}\\
X_1((R_v, R_w),x,t) &= 
(1-(v_0+V)^2 + \varphi_1) R_v -\frac{R_v^{3}}{3} -R_w -R_v^2 F_v \notag\\
&\hspace{1cm}-R_v F_v^2 -\frac{F_v^3}{3} + \varphi_2 (R_v + F_v)^2 \notag\\
X_2((R_v, R_w),x,t) &= \varepsilon(R_v- \gamma R_w) \notag
\end{align}
which belongs to $V(\mathcal{B})$ given by Definition \ref{defi_space_vB} for the spaces $\mathcal{B}$ in Proposition \ref{prop_hip_local_existence_PAS}. As in the proof of Lemma \ref{lemma_invariant_rectangles}, we look for contracting rectangles by checking each face individually.
\begin{enumerate}
\item Top/Bottom face. We get the condition $S > \frac{1}{\gamma} L$. Take $S= \frac{1+\epsilon}{\gamma}L$ where $\epsilon >0$ is chosen so that
\begin{equation}\label{condition_choice_epsilon}
(v_0^2 -1 - \frac{1+\epsilon}{\gamma})>0.
\end{equation}
This can be done because $v_0^2 -1 - \frac{1}{\gamma} >0$ as implied by \eqref{condition_v0}. 

\item Left face. For  $R_v= - L$, $R_w\in [- S,S]$ we get the following
\begin{align*}
I&= (-1,0)\cdot X \\
&=-(1-(v_0+V)^2 -\varphi_1) R_v - \varphi_2 R_v^2 +\frac{R_v^{3}}{3} +R_w \\
 &\hspace{1cm}- F_v\left( -R_v^2 -R_v F_v -\frac{F_v^2}{3} + 2\varphi_2 R_v  +  \varphi_2 F_v \right)\\
&\leq (1-(v_0+V)^2 -\varphi_1) L - \varphi_2 L^2 -\frac{L^{3}}{3} +S \\
&\hspace{1cm}- F_v\left( -L^2 +L F_v -\frac{F_v^2}{3} - 2\varphi_2 L  +  \varphi_2 F_v \right)\\
&\leq -((v_0+V)^2 - 1 - \|\varphi_1\|_Y)L+ \|\varphi_2\|_Y L^2 - L^3/3 + S\\
&\hspace{1cm}+\|F_v\|_Y(L^2 + L \|F_v\|_Y + \frac{1}{3}\|F_v\|_Y^2 + 2 \|\varphi_2\|_Y L + \|\varphi_2\|_Y \|F_v\|_Y).
\end{align*}
Using $S= \frac{1+\epsilon}{\gamma}L$ we obtain the following polynomial in $L$
\begin{align*}
I &= (-1,0)\cdot X\\
&\leq -\left((v_0+V)^2-1-\frac{1+\varepsilon}{\gamma}-\|\varphi_1\|_Y-\|F_v\|_Y^2-2\|\varphi_2\|_Y\|F_v\|_Y\right)L\\
& \hspace{1cm}+(\|\varphi_2\|_Y +\|F_v\|_Y)L^2- \frac{1}{3}L^3 + \left(\frac{1}{3} \|F_v\|^3 +\|\varphi_2\|_Y \|F_v\|_Y^2\right)\\
&= p_0- p_1 L +p_2 L^2 - \frac{1}{3}L^3. 
\end{align*}
Hence, to obtain $(-1,0)\cdot X <0$, it is enough to find conditions on the coefficients $p_0$, $p_1$, $p_2$ so that $p_0- p_1 L +p_2 L^2 = 0$ for an $\hat L>0$. First, because of \eqref{condition_choice_epsilon} we know that $(v_0^2 -1 - \frac{1+\epsilon}{\gamma}) >0$. Second, we have the bounds $p_0 \leq P$, $p_1 \geq Q$, $p_2 \leq R$ for
\begin{enumerate}
\item $P= \frac{1}{3} C_3(\mu)^3 +(|v_0|+C_1(\mu) + 1) C_3(\mu)^2$,
\item $Q= (v_0^2 -1 - \frac{1+\epsilon}{\gamma}) +  (-(2|v_0|+C_1(\mu))C_1(\mu)-C_2(\mu)-C_3(\mu)^2-2(|v_0|+C_1(\mu) + 1) C_3(\mu))$,
\item $R =(|v_0|+C_1(\mu) + 1) +C_3(\mu)$.
\end{enumerate}
Third, it is possible to choose the constants $C_1(\mu)$, $C_2(\mu)$, $C_3(\mu) >0$ small enough such that
\begin{enumerate}
\item $Q>0$,
\item $4PR <Q^2$,
\item $0<\hat L := \frac{Q -\sqrt{Q^2 - 4PR}}{2R} < \min \{\frac{\mu}{2}, \frac{\gamma \mu}{2(1+\epsilon)}\}$.
\end{enumerate}
Lastly, using the bound
$p_0-p_1{\hat L} + p_2{\hat L}^2 \leq P - Q{\hat L} + R{\hat L}^2 =0$ we get for $L=\hat L$ that $(-1,0)\cdot X \leq -{\hat L}^3/3< 0$, and 
\begin{equation}\label{condition_choice_L}
0<\hat{L}<\min\left\{ \frac{\mu}{2}, \frac{\gamma \mu}{2(1+\epsilon)}\right\}.
\end{equation}
\item Right face. Analogously to the left face, 
for $R_v= L$, $R_w\in [- S,S]$ we get the following
\begin{align*}
J&=(1,0)\cdot X \\
&\leq (1-(v_0+V)^2 -\varphi_1) L + \varphi_2 L^2 -\frac{L^{3}}{3} +S \\
&\hspace{1cm}+ F_v\left( -L^2 -L F_v -\frac{F_v^2}{3} + 2\varphi_2 L  +  \varphi_2 F_v \right)\\
&\leq -\left((v_0+V)^2 - 1 - \frac{1+\epsilon}{\gamma} - \|\varphi_1\|_Y- \|F_v\|^2_Y - 2 \|\varphi_2\|_Y \|F_v\|_Y\right)L\\
&\hspace{1cm}+ (\|\varphi_2\|_Y+\|F_v\|_Y) L^2 \\
&\hspace{1cm} - L^3/3 +\left(\frac{1}{3}\|F_v\|_Y^3 + \|\varphi_2\|_Y \|F_v\|_Y^2\right).
\end{align*}
This is the same condition we obtained for the left face.
\end{enumerate}

Finally, because $(1+\hat{\epsilon})(0,0)\in R_{L,S}$ for any rectangle, we can apply Theorem \ref{thm_well_posedness} with $(L,S) = (\hat{L},\frac{1+\epsilon}{\gamma} \hat{L})$, since we showed that such rectangle is contracting under the vector field \eqref{vector_field_error_eqn}. Therefore, there exists a unique solution $(R_v, R_w)\in C([0,T];\mathcal{B} \times \mathcal{B})$ of \eqref{eqR_vR_W} which also satisfies
$$(R_v, R_w) \in R_{L,S}, \quad \forall x\in \R,\quad 0\leq t \leq T.$$ 
Additionally, because of \eqref{condition_choice_L} we know that 
\begin{equation}\label{estimates_nonlinear_error_1}
|R_v(x,t)|\leq \mu/2,\quad |R_w(x,t)|\leq \mu/2 , \quad \forall x\in \R,\quad 0\leq t \leq T.
\end{equation}
We conclude that there exists a unique solution $(E_v, E_w) = (F_v+R_v,F_w+R_w)\in C([0,T];\mathcal{B}\times \mathcal{B})$ of \eqref{full_error_system}. Combining \eqref{estimates_nonlinear_error_2} and \eqref{estimates_nonlinear_error_1}, we obtain the estimate \eqref{thm_nonlinear_conclusion} and conclude the proof of Lemma \ref{nonlinear_estimate_error_pas}.
\end{proof}

\subsection{Proof of Theorem \ref{thm_approximation_pas}}
We finally have all the ingredients to complete the proof of Theorem \ref{thm_approximation_pas}. Given $\mu >0$ choose $C_1(\mu)$, $C_2(\mu)$, $C_3(\mu) >0$ as in Lemma \ref{nonlinear_estimate_error_pas}. Fix $T>0$; to use the above results, we will need $\|V\|_Y$ and $|a|/d_1^2+|b|/d_2^2$ to be small enough. To make this more precise, assume we had $\|V\|_Y<M_1$ and $|a|/d_1^2+|b|/d_2^2<N_1$. If $M_1,N_1$ were small, as indicated in Lemma \ref{lemma_estimate_delta}, then condition \eqref{condition_linear_estimate} would be fulfilled and Lemma \ref{lemma_linear_estimate_error} would imply
\begin{equation}\label{teo3_prop_FvFw}
|F_v(x,t)| \leq C_3(\mu),\quad |F_w(x,t)|\leq \mu/2, \quad \forall x\in \R,~ t\in[0,T],~ \omega_1 \geq \omega_0,
\end{equation}
for some $\omega_0>0$ that depends only on $\mu$. And if $\|V\|_Y<M_1$ and $|a|/d_1^2+|b|/d_2^2<N_1$ we would also have the bounds
\begin{enumerate}
    \item[$\bullet$] $|(v_0^2-(v_0+V)^2| \leq M_1 (2|v_0|+M_1)$,
    \item[$\bullet$] $|J_0|\leq N_1$,
    \item[$\bullet$] $|\varphi_1| =|J_0^2+2(v_0+V)J_0|\leq N_1(N_1+2(|v_0|+M_1) )$,
    \item[$\bullet$] $|\varphi_2| = |v_0+V+J_0|\leq |v_0|+M_1+N_1$.
\end{enumerate}
We will choose $M_1,N_1$ small enough such that whenever $\|V\|_Y<M_1$ and $|a|/d_1^2+|b|/d_2^2<N_1$ then we also have $\|V\|_Y\leq C_1(\mu), |\varphi_1|\leq C_2(\mu)$ and $|\varphi_2|\leq |v_0|+C_1(\mu)+1$. We will also require $N_1<N$ for the $N$ prescribed by Theorem \ref{thm_estimates_pas}.
With all these choices taken care of, we can put all the previous results together.

Given an open neighborhood ${\mathcal{O}}$ of $(0,0)$ we denote  $\tilde{\mathcal{O}} = {\mathcal{O}} \cap (-M_1,M_1)\times (-1,1)$ and let $R$ be the rectangle given by Theorem \ref{thm_estimates_pas} corresponding to $\tilde{\mathcal{O}}$. Set the rectangle in Theorem \ref{thm_approximation_pas} to be this rectangle $R$ and set $N$ in Theorem \ref{thm_approximation_pas} to be $N_1$. Next, by assumption, the initial data $(f_0-v_0,g_0-w_0)\in \mathcal{B}\times \mathcal{B}$ satisfies
$$(1+\hat{\epsilon})(f_0(x)-v_0,g_0(x)-w_0)\in R,\quad \forall x\in\R$$
and from Theorem \ref{thm_estimates_pas} we know that there exists a unique solution $(V,W)\in C([0,\infty);\mathcal{B}\times \mathcal{B})$ of the initial value problem \eqref{FHN_pde_pas} with initial data $(f_0-v_0,g_0-w_0)$, which also satisfies
$$(V(x,t),W(x,t))\in R, \quad \forall x\in\R,\quad t\geq0.$$
In particular, by construction of $R$, we have that $|V(x,t)|\leq M_1, \forall x\in\R, t>0$. Fix $T>0$, from the choices of $M_1,N_1$, the initial value problem \eqref{equation_linear_estimate_error} has a unique solution $(F_v, F_w)\in C([0,T];\mathcal{B}\times \mathcal{B})$ by Lemma \ref{lemma_linear_estimate_error}. We also verify the hypotheses of Lemma \ref{nonlinear_estimate_error_pas}, so we conclude there exists a unique solution $(E_v, E_w)\in C([0,T];\mathcal{B}\times \mathcal{B})$ of \eqref{full_error_system} which satisfies 
\begin{equation}\label{final_estimate_error}
|E_v(x,t)| \leq \mu,\quad |E_w(x,t)| \leq \mu, \quad \forall x\in \R, t \in[0,T].
\end{equation}
Finally, using the definition of the equation for the approximation error in Proposition \ref{prop_eqn_approximation_error}, we have the decomposition
$$v = V + E_v + J_0, \quad w = W + E_w,\quad x\in\R,t\in[0,T],$$
and we conclude that there exits a unique solution $(v,w)\in C([0,T];\mathcal{B}\times \mathcal{B})$ of the initial value problem \eqref{FHN_pde} with initial data $(\bar{f}_0,\bar{g}_0) = (f_0 - v_0,g_0-w_0)$ which satisfies \eqref{final_estimate_error}. Returning to the original variables, there exists a unique solution $(f,g)$ of \eqref{FHN_pde_non_centered} with initial data $(f_0,g_0)$ that satisfies $(f -v_0 ,g-w_0 )\in C([0,T];\mathcal{B}\times \mathcal{B})$, and that additionally satisfies the estimate
\begin{equation*}
|f-v_0-V-J_0| \leq \mu,\quad |g-w_0-W| \leq \mu, \quad \forall x\in \R, t\in[0,T].
\end{equation*}
Since $T>0$ was arbitrary, the proof of Theorem \ref{thm_approximation_pas} is complete. \qed

\appendix
\section{Appendix: Technical proofs.}\label{section_appendix}
\subsection{Derivation of the Partially Averaged System}\label{appendix_derivation}
We look for solutions $(v,w)$ of \eqref{FHN_pde} of the form 
\begin{align*}
    v=\bar{v}+A(x)\sin(\omega_1 t)+B(x)\sin(\omega_2 t)+ O\left(\frac{1}{\omega_1}\right),\quad
    w=\bar{w}+O\left(\frac{1}{\omega_1}\right),
\end{align*}
where $(\bar{v},\bar{w})$ is the slow varying part of $(v,w)$, $A(x)$ and $B(x)$ are given by \eqref{decay_AxBx} and $\omega_1$, $\omega_2$ satisfy \eqref{interferential}. In order to obtain equations for $(\bar{v},\bar{w})$, we ignore the error terms $O\left(\frac{1}{\omega_1}\right)$ and we substitute $v=\bar{v}+A(x)\sin(\omega_1 t)+B(x)\sin(\omega_2 t)$, and $w=\bar{w}$ into \eqref{FHN_pde}. Using the notation 
$$J_0(x,t)=A(x)\sin(\omega_1 t)+B(x)\sin(\omega_2 t)$$
 we can write the following equations for $(\bar v,\bar w)$
\begin{align}
    \partial_t \bar{v}-\partial_{x}^2 \bar{v}&=(1-v_0^2)\bar{v} - v_0 \bar{v}^2-\frac{1}{3}\bar{v}^3-\bar{w} -\bar{v}X_1(x,t) -Z(\bar{v},x,t)-v_0 J_0^2 \notag \\
    &=\psi_1(\bar{v},\bar{w},x,t),\label{pre_avg}\\
    \partial_t\bar{w}-\rho\partial_{x}^{2}\bar{w} &=\varepsilon(\bar{v}-\gamma \bar{w}) +\varepsilon J_0=
 \psi_2(\bar{v},\bar{w},x,t), \notag
\end{align}
where $X_1(x,t)$ and $Z(x,t)$ are given by 
\begin{multline*}
    X_1(x,t)= A(x)^2  \sin^2(\omega_1 t)+ B(x)^2  \sin^2(\omega_2 t)\\
    +2 A(x) B(x)  \sin(\omega_1 t) \sin(\omega_2 t) +2 v_0 J_0 ,
\end{multline*}
and 
\begin{align*}
Z(\bar{v},x,t) &= - A(x) \sin(\omega_1 t) -B(x) \sin(\omega_2 t)\\
&\hspace{1cm}+ \frac{A(x)^3}{3} \sin^3(\omega_1 t) + \frac{B(x)^3}{3} \sin^3(\omega_2 t) \\
&\hspace{1cm}+ A(x)\bar{v}^2 \sin(\omega_1 t) +B(x)\bar{v}^2 \sin(\omega_2 t) \\
&\hspace{1cm}+  A(x)^2 B(x)\sin^2(\omega_1 t) \sin(\omega_2 t)\\
&\hspace{1cm}+  A(x) B(x)^2 \sin(\omega_1 t) \sin^2(\omega_2 t)- \partial_x^2 J_0(x,t)  +v_0^2 J_0.
\end{align*}
We assume that $(\bar{v},\! \bar{w})$ is slowly varying in comparison with the high-frequency term $J_0$. Thus, it makes sense to consider the following averaging on the right-hand side, where we assume $V$ and $W$ to be constants 
\begin{align*}
   \left| \frac{\omega_1}{2\pi}\int_{t-\frac{\pi}{\omega_1}}^{t+\frac{\pi}{\omega_1}}
    Z(V,x,s)ds \right| \leq C(1\!+\!V^2)(\vert A(x)\vert \!+\! \vert B(x)\vert \!+\! \vert A(x)\vert ^3 \!+\! \vert B(x)\vert ^3)\frac{\eta}{\omega_1}.
\end{align*}
In $X_1(x,t)$ we use the identities $\sin^2 (\alpha) = \frac{1}{2} - \frac{1}{2}\cos (2\alpha)$ and $\sin(\omega_1 t) \sin(\omega_2 t)$ $= \frac{1}{2}\left(\cos(\omega_1-\omega_2)t - \cos(\omega_1+\omega_2)t\right)$ to get the following decomposition
\begin{equation*}
X_1(x,t) = \frac{A(x)^2}{2} + \frac{B(x)^2}{2} + A(x) B(x) \cos(\omega_1 - \omega_2) t + X_2(x,t),
\end{equation*}
where 
$X_2(x,t) =- \frac{A(x)^2}{2} \cos(2\omega_1 t) - \frac{B(x)^2}{2} \cos(2\omega_2 t) - A(x) B(x) \cos((\omega_1 + \omega_2)t) + 2 v_0 J_0 $. Taking the same averaging as before and using $\omega_1 - \omega_2 =  -\eta$ we get
\begin{multline*}
\frac{\omega_1}{2\pi}\int_{t-\pi/\omega_1}^{t+\pi/\omega_1}\left(\frac{A(x)^2}{2} + \frac{B(x)^2}{2} + A(x)B(x) \cos(\eta s) \right)ds\\
= \frac{A(x)^2}{2} + \frac{B(x)^2}{2} + A(x)B(x) \cos(\eta t) + O\left(\frac{\vert A(x) B(x) \eta\vert }{\omega_1}\right),
\end{multline*}
\begin{equation*}
\left\vert \frac{\omega_1}{2\pi}\int_{t-\pi/\omega_1}^{t+\pi/\omega_1} X_2(x,s) ds\right\vert  \leq C (|A(x)|+|B(x)|+|A(x)|^2+|B(x)|^2 ) \frac{\eta}{\omega_1}.
\end{equation*}
Finally, for the term $v_0J_0^2$ in \eqref{pre_avg}, we get that
\begin{equation*}
\frac{\omega_1}{2\pi}\int_{t-\pi/\omega_1}^{t+\pi/\omega_1}v_0J_0^2(s) ds
= v_0\left(\frac{A(x)^2}{2} + \frac{B(x)^2}{2} + A(x)B(x) \cos(\eta t)\right) 
+ O\left(\frac{ \left(|A(x)|^2 +|B(x)|^2\right) \eta }{\omega_1}\right).
\end{equation*}
Putting all together, we get from the averaging on $[t-\pi/\omega_1,t+\pi/\omega_1]$
\begin{align*}
I_1 &= \frac{\omega_1}{2\pi}\int_{t-\pi/\omega_1}^{t+\pi/\omega_1} \psi_1(V,W,x,s)ds \\
&= V\left(1- v_0^2-\frac{A(x)^2}{2}-\frac{B(x)^2}{2}-A(x)B(x)\cos(\eta t)\right)\\
&\hspace{1cm}-v_0 V^2- V^3/3 - W -v_0\left(\frac{A(x)^2}{2}+\frac{B(x)^2}{2}+A(x)B(x)\cos(\eta t)\right)\\
&\hspace{1cm}+O\left(( V^2 +1) (\vert A(x)\vert +\vert B(x)\vert +\vert A(x)\vert ^3 + \vert B(x)\vert ^3) \eta/\omega_1\right),\\
I_2 &=\frac{\omega_1}{2\pi}\int_{t-\pi/\omega_1}^{t+\pi/\omega_1} \psi_2(V,W,x,s)ds \\
&= \varepsilon(V-\gamma W) +O(\vert B(x)\vert \eta/\omega_1).
\end{align*}
Therefore, by ignoring all the $O(1/\omega_1)$ terms, we get the Partially Averaged System given by  \eqref{FHN_pde_pas}, i.e.,
\begin{equation*}
\left\{
\begin{array}{rl}
\partial_t V-\partial_{x}^2V &= \left(1-v_0^2-\frac{A(x)^2}{2} - \frac{B(x)^2}{2} -A(x) B(x) \cos(\eta t) \right)V\\
&\hspace{1.5cm}-v_0V^2 - \frac{1}{3}V^3 - W\\
&\hspace{1.5cm}-\left(\frac{A(x)^2}{2}+\frac{B(x)^2}{2}+A(x)B(x)\cos(\eta t)\right)v_0,\\
\partial_t W- \rho\partial_{x}^2 W &= \varepsilon\left( V- \gamma W \right),\\
V(x,0)  = \bar{f_0}(x),\; &W(x,0) =\bar{g_0}(x) .
\end{array}
\right.
\end{equation*}

\subsection{Proof of Lemma \ref{properties_admissible_parameters}}\label{app_properties_admissible_parameters}
Solving equation \eqref{defi_v0w0} is equivalent to solving the following cubic equation for $v_0$
\begin{equation}\label{defi_polinomial_h}
h(v) = v^3 - 3(1-1/\gamma)v + 3\beta/\gamma  = 0.
\end{equation}
For such depressed cubic, there always is a solution, and the condition for the uniqueness of the real root can be written as
\begin{equation}\label{first_condition_paramaters}
\frac{(1-\gamma)^3}{\gamma^3} +\frac 9 4 \frac{\beta^2}{\gamma^2}  > 0,
\end{equation}
so if $\beta,\gamma$ are admissible then \ref{lemma_admis_1} in Lemma \ref{properties_admissible_parameters} holds. Next, we verify that condition \eqref{condition_H1} is non-empty. Since all parameters are positive and $\frac{\gamma-1}{\gamma}\leq 1$, condition \eqref{first_condition_paramaters} always holds if we require $\frac{9}{4}\frac{\beta^2}{\gamma^2} \geq 1$ or simply $\frac{\beta}{\gamma}\geq \frac{2}{3}$. Once $\beta/\gamma\geq 2/3$ the second condition in \eqref{condition_H1} will be satisfied if the parameters verify that 
$$\left(1+\frac{1}{\delta\gamma}\right)^{1/2}\left( 2-\frac{3}{\gamma}-\frac{1}{\delta\gamma}\right) + 2 >0,$$
for which it is enough that $2-\frac{3}{\gamma}-\frac{1}{\delta\gamma} \geq 0$, or simply $\gamma \geq \frac{3\delta +1}{2\delta}$. We conclude that the set of admissible parameters is non-empty, and for any $\delta>0$, it includes the set
\begin{equation*}\left\{(\beta,\gamma): \gamma \geq \frac{2\delta +1}{\delta}, \beta \geq \frac{2}{3}\gamma\right\}.\end{equation*} To get the estimates in \eqref{condition_v0} of \ref{lemma_admis_2}, Lemma \ref{properties_admissible_parameters}, we use that $v_0$ is the unique solution of the cubic equation \eqref{defi_polinomial_h} and that the dominating coefficient in $h(v)$ is positive, therefore $a<v_0<b$ if and only if $h(a)<0<h(b)$. Let us compute
\begin{align*}
h\left(-\left(1+\frac{1}{\delta\gamma}\right)^{1/2}\right) &= -\left(1+\frac{1}{\delta\gamma}\right)^{3/2} + 3\left(1-\frac{1}{\gamma}\right)\left(1+\frac{1}{\delta\gamma}\right)^{1/2} + 3\frac{\beta}{\gamma} \notag,\\
&= \left(1+\frac{1}{\delta\gamma}\right)^{1/2}\left( 2-\frac{3+1/\delta}{\gamma}\right) + 3\frac{\beta}{\gamma},
\end{align*}
and this last quantity is positive because of \eqref{condition_H1}, hence $\left(- \sqrt{1+\frac{1}{\delta\gamma}}\right)>v_0$, giving the upper bound in \eqref{condition_v0}. To establish the lower bound in \eqref{condition_v0} we again evaluate $h$,
\begin{equation*}
h(-\sqrt{3})= -\frac{3}{\gamma} (\sqrt{3} -\beta),\quad h(-\beta)= -\beta(\beta-\sqrt{3})(\beta+\sqrt{3}).
\end{equation*}
For $\beta < \sqrt{3}$ we have $h(-\sqrt{3})<0$ and for $\beta \geq \sqrt{3}$ we get $h(-\beta)\leq 0$, hence $v_0\geq\min\{-\beta,-\sqrt{3}\}$. This concludes the proof of Lemma \ref{properties_admissible_parameters}. \qed

\subsection{Proof of Proposition \ref{prop_hip_local_existence_PAS}}\label{proof_prop_hip_local_existence_PAS}
We need the following interpolation result for the proof of Proposition \ref{prop_hip_local_existence_PAS}.
\begin{lemma}[Interpolation Lemma]\label{interpolation_lemma}
Let $k\geq 1$ be a positive integer and $p \geq 1$ be real. Then, we have the following results where $C>0$ denotes a generic constant.
\begin{itemize}
\item[(i)]  If $f\in W^{k,p}(\R)$, $j\in \N_0$, $j \leq k$ and $j/k \leq \theta \leq 1$, then
$$\|D^{j} f\|_{L^{p k/j}}\leq C \|f\|_{W^{k,p}}^{\theta} \|f\|_{L^\infty}^{1-\theta}.$$
\item[(ii)] If $f,g\in W^{k,p}(\R)$,  then
\begin{equation}\label{product_wkp}
\|f g\|_{W^{k,p}}\leq C (\|f\|_{W^{k,p}}+\|g\|_{W^{k,p}})(\|f\|_{L^\infty}+\|g\|_{L^\infty}).
\end{equation}
\item[(iii)] If $f,g\in W^{k,p}(\R)$,  and $kp>1$  then 
\begin{equation}\label{cauchy_wkp}
\|f g\|_{W^{k,p}} \leq C \|f\|_{W^{k,p}} \|g\|_{W^{k,p}}.
\end{equation}
\end{itemize}
\end{lemma}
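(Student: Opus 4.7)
The plan is to establish the three parts in sequence, with parts (ii) and (iii) both resting on part (i). Part (i) is the classical one-dimensional Gagliardo-Nirenberg interpolation inequality (a standard citation); for our purposes the scale-invariant endpoint $\theta=j/k$ suffices, namely
\begin{equation*}
\|D^j f\|_{L^{pk/j}}\le C\|D^k f\|_{L^p}^{j/k}\|f\|_{L^\infty}^{1-j/k}\le C\|f\|_{W^{k,p}}^{j/k}\|f\|_{L^\infty}^{1-j/k},
\end{equation*}
and the extension to $\theta\in(j/k,1]$ follows by logarithmic interpolation between this endpoint and the Sobolev embedding bound $\|D^j f\|_{L^{pk/j}}\le C\|f\|_{W^{k,p}}$ (which holds since $W^{k,p}(\R)\hookrightarrow W^{j,pk/j}(\R)$ whenever $k\ge j$, even at the borderline $p=k=1$).

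For part (ii), the standard equivalence $\|u\|_{W^{k,p}(\R)}\sim\|u\|_{L^p}+\|D^k u\|_{L^p}$ (itself an immediate consequence of the $L^p\to L^p$ Gagliardo-Nirenberg inequality $\|D^j u\|_{L^p}\le C\|u\|_{L^p}^{1-j/k}\|D^k u\|_{L^p}^{j/k}$) reduces matters to bounding $\|fg\|_{L^p}$ and $\|D^k(fg)\|_{L^p}$. The first is trivial: $\|fg\|_{L^p}\le\|f\|_{L^\infty}\|g\|_{L^p}\le\|f\|_{L^\infty}\|g\|_{W^{k,p}}$. For the second I would apply the Leibniz rule
\begin{equation*}
D^k(fg)=\sum_{\ell=0}^{k}\binom{k}{\ell}\,D^\ell f\cdot D^{k-\ell}g.
\end{equation*}
The endpoint terms $\ell=0,k$ are bounded by pulling one factor out in $L^\infty$. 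For $1\le\ell\le k-1$, H\"older with conjugate exponents $pk/\ell$ and $pk/(k-\ell)$ (which verify $\tfrac{\ell}{pk}+\tfrac{k-\ell}{pk}=\tfrac{1}{p}$), combined with part (i) applied to each factor, yields
\begin{equation*}
\|D^\ell f\cdot D^{k-\ell}g\|_{L^p}\le C\|f\|_{W^{k,p}}^{\ell/k}\|f\|_{L^\infty}^{(k-\ell)/k}\|g\|_{W^{k,p}}^{(k-\ell)/k}\|g\|_{L^\infty}^{\ell/k}.
\end{equation*}
Regrouping this product as $\bigl[\|f\|_{W^{k,p}}\|g\|_{L^\infty}\bigr]^{\ell/k}\bigl[\|f\|_{L^\infty}\|g\|_{W^{k,p}}\bigr]^{(k-\ell)/k}$ and using Young's inequality $a^\alpha b^{1-\alpha}\le a+b$ produces the tame bound
\begin{equation*}
\|fg\|_{W^{k,p}}\le C\bigl(\|f\|_{W^{k,p}}\|g\|_{L^\infty}+\|f\|_{L^\infty}\|g\|_{W^{k,p}}\bigr),
\end{equation*}
which is stronger than (ii) and implies it by trivially bounding the right-hand side by $(\|f\|_{W^{k,p}}+\|g\|_{W^{k,p}})(\|f\|_{L^\infty}+\|g\|_{L^\infty})$.

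Part (iii) is now immediate: the hypothesis $kp>1$ triggers the Sobolev embedding $W^{k,p}(\R)\hookrightarrow L^\infty(\R)$, giving $\|f\|_{L^\infty}\le C\|f\|_{W^{k,p}}$ and analogously for $g$, so the tame estimate above collapses to $\|fg\|_{W^{k,p}}\le C\|f\|_{W^{k,p}}\|g\|_{W^{k,p}}$. The only subtle point in the whole argument is the combinatorial matching in (ii): the H\"older exponents $pk/\ell$ and $pk/(k-\ell)$ must be chosen precisely so that part (i) applies with its scale-invariant exponent on each factor, and the four resulting norm-factors then reorganize via Young's inequality into the clean tame structure $\|f\|_{W^{k,p}}\|g\|_{L^\infty}+\|f\|_{L^\infty}\|g\|_{W^{k,p}}$. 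Once this matching is recognized the rest of the proof is routine.
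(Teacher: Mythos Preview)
Your proof is correct and follows essentially the same approach as the paper: the Gagliardo--Nirenberg endpoint $\theta=j/k$ for (i), then Leibniz plus H\"older with exponents $pk/\ell,\,pk/(k-\ell)$ and part (i) on each factor for (ii), and Sobolev embedding for (iii). The only cosmetic differences are that the paper bounds $\|fg\|_{L^p}$ via $\|f\|_{L^{2p}}\|g\|_{L^{2p}}$ rather than pulling one factor into $L^\infty$, and it obtains the general $\theta$ in (i) by a geometric average of two Gagliardo--Nirenberg estimates rather than your interpolation against the Sobolev embedding; your explicit derivation of the sharper tame bound $\|f\|_{W^{k,p}}\|g\|_{L^\infty}+\|f\|_{L^\infty}\|g\|_{W^{k,p}}$ via Young is a nice touch the paper leaves implicit.
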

\begin{proof}
Let us start with the item (i). The case $j = k$ is immediate. For $j<k$, using Gagliardo-Niremberg interpolation, we know that
$$\|D^j f\|_{L^{\hat{p}}} \leq C \|D^k f\|_{L^p}^\theta \|f\|_{L^{q}}^{1-\theta}$$
where $\frac{1}{\hat{p}} = \frac{\theta}{p} + k(\frac{j}{k} - \theta)  + \frac{1-\theta}{q}$, $j/k \leq \theta \leq 1$. We apply this inequality with $\theta = j/k$ $q = \infty$ to obtain 
\begin{equation*}
\|D^j f\|_{L^{p k/j}} \leq C \|D^k f\|_{L^p}^{j/k} \|f\|_{L^{\infty}}^{(1-j/k)}.
\end{equation*}
Next, applying the inequality with $q = p$ and $\theta = j/k+\frac{\hat{p}-p}{p\hat{p}}$, $\hat{p} = p k/j$ we obtain
\begin{equation*}
\|D^j f\|_{L^{p k/j}} \leq C \|D^k f\|_{L^p}^{j/k+\frac{\hat{p}-p}{p\hat{p}}} \|f\|_{L^{p}}^{(1-j/k -\frac{\hat{p}-p}{p\hat{p}})},
\end{equation*}
and taking the geometric average of the estimates
$$\|D^j f\|_{L^{pk/j}} \leq C \|D^k f\|_{L^p}^{j/k+\delta \frac{\hat{p}-p}{p\hat{p}}} \|f\|_{L^{\infty}}^{(1-\delta)(1-j/k)} \|f\|_{L^{p}}^{\delta(1-j/k-\frac{\hat{p}-p}{p\hat{p}})}.$$
Using that $\|D^k f\|_{L^p}\leq \|f\|_{W^{k,p}}$, and $\|f\|_{L^p}\leq \|f\|_{W^{k,p}}$ this gives us item (i). For item (ii), we use the following
\begin{equation*}
\|f g\|_{L^{p}}\leq \|f\|_{L^{2p}} \|g\|_{L^{2p}} \leq \|f\|_{L^p}^{1/2} \|f\|_{L^\infty}^{1/2} \|g\|_{L^p}^{1/2} \|g\|_{L^\infty}^{1/2}
\leq C(\|f\|_{L^p}+\|g\|_{L^p})(\|f\|_{L^\infty}+\|g\|_{L^\infty}),
\end{equation*}
and for the derivative, we use estimate (i)
\begin{align*}
\|D^k(f g)\|_{L^{p}}&\leq \sum_{j=0}^{k}\binom{k}{j}\|D^j f D^{k-j}g\|_{L^{p}}\\
&\leq C \sum_{j=0}^{k}\binom{k}{j} \|D^j f\|_{L^{p k/j}} \|D^{k-j} g\|_{L^{p k/(k-j)}}\\
&\leq C \sum_{j=0}^{k}\binom{k}{j} \| f\|_{W^{k,p}}^{j/k} \|f\|_{L^\infty}^{1-j/k} \| g\|_{W^{k,p}}^{1-j/k} \|g\|_{L^\infty}^{j/k}\\
&\leq C (\|f\|_{W^{k,p}}+\|g\|_{W^{k,p}})(\|f\|_{L^\infty}+\|g\|_{L^\infty}).
\end{align*}
Combining the two inequalities, we obtain \eqref{product_wkp}. 
For item (iii), since $kp>1$, we can apply Sobolev Embedding to conclude that $\|f\|_{L^\infty} \leq \|f\|_{W^{k,p}}$, which applied to \eqref{product_wkp} gives us \eqref{cauchy_wkp}. This concludes the proof of Lemma \ref{interpolation_lemma}.
\end{proof}

For the proof of Proposition \ref{prop_hip_local_existence_PAS}, we must verify the properties listed in Definition \ref{space_rauch_smoller}. Property 1 is a consequence of Sobolev embedding in the case $\mathcal{B}=W^{k,p}(\R)$ and for $\mathcal{B}=BC^0\cap L^p(\R)$ it is a consequence of the definition of the norm $\|f\|_{BC^0\cap L^p(\R)} = \|f\|_{L^\infty} + \|f\|_{L^p} \geq \|f\|_{L^\infty}$. Properties 2 and 3 are immediate from the definition of the spaces in both cases.  

For Property 4 in Definition \ref{space_rauch_smoller}, we use that the space $V(\mathcal{B})$ given by Definition \ref{defi_space_vB} can be constructed recursively increasing the degree of the polynomial the following way: 
\begin{itemize}
\item[i)] Base step: Every $f\in L^\infty([0,\infty);\mathcal{B} \times \mathcal{B})$ belongs to $V(\mathcal{B})$. The functions $P_1((U,V),x,t) = U$, $P_2((U,V),x,t) = V$ belong to $V(\mathcal{B})$
\item[ii)] Recursive step:
\begin{itemize}
\item[a)] If $A \in V(\mathcal{B})$ and $f \in L^{\infty}([0,\infty))$, then $f A \in V(\mathcal{B})$
\item[b)] If $A_1 \in V(\mathcal{B})$ and $A_2\in V(\mathcal{B})$ then $A_1 A_2 \in V(\mathcal{B})$ (componentwise multiplication).
\item[c)] If $A_1 \in V(\mathcal{B})$ and $A_2\in V(\mathcal{B})$ then $A_1+A_2 \in V(\mathcal{B})$.
\end{itemize}
\item[iii)] The set $V(\mathcal{B})$ only contains elements obtained from i) or ii).
\end{itemize}
Next, we argue using structural induction that  all the elements in $A\in V(\mathcal{B})$ satisfy the property 
\begin{equation}\label{property}
A \text{ satisfies } \eqref{condition_non_linearity}, \eqref{condition_non_linearity2}\text{ and } \eqref{condition_non_linearity3}.
\end{equation}

Let us take a look at each family of spaces in the statement.\\
{\bf Case $\mathcal{B} = W^{k,p}(\R)$ with $kp>1$}\\
For elements given by the base step, the property \eqref{property} holds trivially. We proceed to the recursive step. Every time we write $A\in V(\mathcal{B})$, we mean a vector-valued function $A((V, W),x,t)$, but to simplify the notation, we only write $A(V, W)$.
\begin{enumerate}
\item[a)] Let $A\! \in\! V(W^{k,p}(\R))$ satisfying property \eqref{property} and let $f(t)\!\in\! L_t^{\infty}([0,\infty))$. Let us verify \eqref{condition_non_linearity}. Let 
\begin{equation*}
\|(V_1,W_1)\|_{W^{k,p}\times W^{k,p}} \leq M, \quad  \|(V_2,W_2)\|_{W^{k,p}\times W^{k,p}}\leq M
\end{equation*}
then
\begin{align*}
I_1 &= \|f(t) A(V_1,W_1,x,t) - f(t) A(V_2,W_2,x,t)\|_{W^{k,p}\times W^{k,p}}\\
&\leq \|f\|_{L_t^\infty([0,\infty))} \|A(V_1,W_1) - A(V_2,W_2)\|_{W^{k,p}\times W^{k,p}}\\
&\leq C \|f\|_{L_t^\infty([0,\infty))}\|(V_1-V_2,W_1-W_2)\|_{W^{k,p}\times W^{k,p}}
\end{align*}
because $A$ satisfies \eqref{condition_non_linearity}. Properties \eqref{condition_non_linearity2} and \eqref{condition_non_linearity3} are analogous. We conclude that $f A$ satisfies \eqref{property}.
\item[b)] Let $A \in V(W^{k,p}(\R))$ and $B\in V(W^{k,p}(\R))$ satisfying property \eqref{property}. To verify \eqref{condition_non_linearity} let $\|(V_1,W_1)\|_{W^{k,p}\times W^{k,p}} \leq M$, $\|(V_2,W_2)\|_{W^{k,p}\times W^{k,p}}\leq M$. Then, we can bound as follows
\begin{align*}
R\! &=\|A(V_1,W_1) B(V_1,W_1)-  A(V_2,W_2)B(V_2,W_2)\|_{W^{k,p}\times W^{k,p}}\\
&= \|A(V_1,W_1) B(V_1,W_1)- A(V_2,W_2)B(V_1,W_1) \\
&\hspace{0.3cm}+ A(V_2,W_2)B(V_1,W_1)- A(V_2,W_2)B(V_2,W_2)\|_{W^{k,p}\times W^{k,p}} \\
&\leq \|A(V_1,W_1) B(V_1,W_1)- A(V_2,W_2)B(V_1,W_1)\|_{W^{k,p}\times W^{k,p}}\\
& \hspace{0.3cm}+ \|A(V_2,W_2)B(V_1,W_1)- A(V_2,W_2)B(V_2,W_2)\|_{W^{k,p}\times W^{k,p}}\\
&\leq C\|A(V_1,W_1)\!-\!A(V_2,W_2)\|_{W^{k,p}\times W^{k,p}}\|B(V_1,W_1)\!-\!B(0,0)\|_{W^{k,p}\times W^{k,p}}\\
&\hspace{0.1cm}+\! C\|A(V_1,W_1)\!-\!A(V_2,W_2)\|_{W^{k,p}\times W^{k,p}}\|B(0,0)\|_{W^{k,p}\times W^{k,p}}\\
&\hspace{0.1cm}+\! C\|A(V_2,W_2)\!-\!A(0,0)\|_{W^{k,p}\times W^{k,p}}\|B(V_1,W_1)\!-\!B(V_2,W_2)\|_{W^{k,p}\times W^{k,p}}\\
&\hspace{0.1cm}+\!C\|A(0,0)\|_{W^{k,p}\times W^{k,p}}\|B(V_1,W_1) \!-\! B(V_2,W_2)\|_{W^{k,p}\times W^{k,p}}\\
&\leq C \|(V_1-V_2,W_1-W_2)\|_{W^{k,p}\times W^{k,p}}\\
&\hspace{0.3cm}\times\Big( \|(V_1,W_1)\|_{W^{k,p}\times W^{k,p}}+\|(V_2,W_2)\|_{W^{k,p}\times W^{k,p}} \\
&\hspace{1cm}+ \|A(0,0)\|_{W^{k,p}\times W^{k,p}}+\|B(0,0)\|_{W^{k,p}\times W^{k,p}}\Big)
\end{align*}
where we used Lemma \ref{interpolation_lemma} item (ii). This gives us \eqref{condition_non_linearity}. Condition \eqref{condition_non_linearity3} is analogous. Condition \eqref{condition_non_linearity2} is more delicate since it involves two different spaces. Let $\|(V_1,W_1)\|_{L_x^\infty \times L_x^\infty}\leq M$ and apply Lemma \ref{interpolation_lemma} items (ii) and (iii) to obtain
\begin{align*}
&R =\|A(V_1,W_1) B(V_1,W_1)-  A(0,0)B(0,0)\|_{W^{k,p}\times W^{k,p}}\\
&= \|A(V_1,W_1) B(V_1,W_1)- A(V_1,W_1)B(0,0) \\
&\hspace{0.3cm}+ A(V_1,W_1)B(0,0)- A(0,0)B(0,0)\|_{W^{k,p}\times W^{k,p}} \\
&\leq \|(A(V_1,W_1)-A(0,0)) (B(V_1,W_1)- B(0,0))\|_{W^{k,p}\times W^{k,p}} \\
&\hspace{0.3cm}+ \|A(0,0) (B(V_1,W_1)- B(0,0))\|_{W^{k,p}\times W^{k,p}}\\
&\hspace{0.3cm}+ \|(A(V_1,W_1)\!-\!A(0,0))B(0,0)\|_{W^{k,p}\times W^{k,p}}\\
&\leq\! C(\|A(V_1,W_1)\!-\!A(0,0)\|_{W^{k,p}\times W^{k,p}}\!+\!\|B(V_1,W_1)-B(0,0)\|_{W^{k,p}\times W^{k,p}})\\
&\hspace{0.6cm}\times (\|A(V_1,W_1)-A(0,0)\|_{L_x^\infty\times L_x^\infty}+\|B(V_1,W_1)-B(0,0)\|_{L_x^\infty \times L_x^\infty})\\
&\hspace{0.3cm}+C\|A(0,0)\|_{W^{k,p}}\|B(V_1,W_1)-B(0,0)\|_{W^{k,p}\times W^{k,p}}\\
&\hspace{0.3cm}+ C\|A(V_1,W_1)-A(0,0)\|_{W^{k,p}\times W^{k,p}}\|B(0,0)\|_{W^{k,p}\times W^{k,p}}\\
&\leq\! C \|(V_1,W_1)\|_{W^{k,p}\times W^{k,p}}\\
&\hspace{0.3cm}\times\left( \|(V_1,W_1)\|_{L_x^\infty \times L_x^\infty} + \|A(0,0)\|_{W^{k,p}\times W^{k,p}}+\|B(0,0)\|_{W^{k,p}\times W^{k,p}}\right).
\end{align*}
We conclude that $AB$ also satisfies property \eqref{property}.
\item[c)] Trivial.
\end{enumerate}
By structural induction we conclude that the conditions \eqref{condition_non_linearity}, \eqref{condition_non_linearity2}, \eqref{condition_non_linearity3} are satisfied for every element in $V(W^{k,p}(\R))$. This concludes the proof of Proposition \ref{prop_hip_local_existence_PAS} for $\mathcal{B} = W^{k,p}(\R)$. 
\bigbreak  
\noindent
{\bf Case $\mathcal{B} = BC^0(\R)\cap L^p(\R)$}\\
It is enough to notice that if  $\|f\|_\mathcal{B} = \|f\|_{L^\infty} + \|f\|_{L^p}$ then
\begin{equation*}
\|f g\|_{\mathcal{B}}\leq \|f\|_{L^\infty} \|g\|_{L^\infty}+ \|f\|_{L^\infty} \|g\|_{L^p} + \|f\|_{L^p} \|g\|_{L^\infty}.
\end{equation*}
Two immediate consequences of this inequality are
$$\|f g\|_\mathcal{B}\leq 2 \|f\|_\mathcal{B} \|g\|_\mathcal{B},$$
and 
$$\|f g\|_\mathcal{B}\leq \|f\|_{L^\infty} \|g\|_\mathcal{B} + \|f\|_\mathcal{B} \|g\|_{L^\infty}.$$
Thanks to these two inequalities, the previous proof also works in this case.  This concludes the proof of Proposition \ref{prop_hip_local_existence_PAS} for $\mathcal{B} = BC^0(\R)\cap L^{p}(\R)$.

\qed


\end{document}